\newtheorem{lemma}{Lemma}[section]
\newtheorem{theorem}{Theorem}[section]
\newtheorem{proposition}{Proposition}[section]
\newtheorem{corollary}{Corollary}[section]
\newtheorem*{remark}{Remark}
\theoremstyle{definition}
\numberwithin{equation}{section}
\def\le{\leqslant} \def\ge{\geqslant}
\def\pmod #1{({\rm mod}\ #1)}
\def \N {{\mathbb N}}
\def \P {{\mathbb P}}
\def \Q {{\mathbb Q}}
\def \R {{\mathbb R}}
\def \Z {{\mathbb Z}}
\newcommand{\rank}{{\rm{rank}}}
\newcommand{\rankoff}{{\rm{rank}_{\rm{off}}}}
\newcommand{\dia}{{\rm{diag}}}
\begin{document}
\title[]{The quadratic form in 9 prime variables}
\author{Lilu  Zhao}
\email{zhaolilu@gmail.com}
\address{School of Mathematics, Hefei University of Technology, Heifei 230009, People's Republic of China}

\begin{abstract}
Let $f(x_1,\ldots,x_n)$ be a regular indefinite integral quadratic
form with $n\ge 9$, and let $t$ be an integer. It is established
that $f(x_1,\ldots,x_n)=t$ has solutions in prime variables if
there are no local obstructions.
\end{abstract}

\maketitle


{\let\thefootnote\relax\footnotetext{2010 Mathematics Subject
Classification: 11D09 (11P32, 11P55)}}

{\let\thefootnote\relax\footnotetext{Keywords: quadratic form,
prime variables, circle method}}

\section{Introduction}

Let $A=(a_{i,j})_{1\le i,j\le n}$ a symmetric integral matrix with
$n\ge 4$. In other words,
\begin{align}\label{defA} A=\begin{pmatrix}a_{1,1}  & \cdots & a_{1,n}
\\ \vdots & \cdots & \vdots
\\ a_{n,1} & \cdots  & a_{n,n} \end{pmatrix}\end{align}
with $a_{i,j}=a_{j,i}\in\Z$ for all $1\le i< j\le n$. Let
$f(x_1,\ldots,x_n)$ be the quadratic form defined as
\begin{align}\label{fn}f(x_1,\ldots,x_n)=\sum_{i=1}^{n}\sum_{j=1}^na_{i,j}x_ix_j.\end{align}
Let $t$ be an integer. For indefinite integral quadratic forms,
the Hasse principle asserts that $f(x_1,\ldots,x_n)=t$ has integer
solutions if and only if $f(x_1,\ldots,x_n)=t$ has local
solutions.

In this paper, we consider the equation $f(x_1,\ldots,x_n)=t$,
where $x_1,\ldots,x_n$ are prime variables. It is expected that
$f(x_1,\ldots,x_n)=t$ has solutions with $x_1,\ldots,x_n$ primes
if there are suitable local solutions. A classical theorem of Hua
\cite{Hua} deals with diagonal quadratic forms in 5 prime
variables. In particular, all sufficiently large integers,
congruent to 5 modulo 24, can be represented as a sum of five
squares of primes. Recently, Liu \cite{Liu} handled a wide class
of quadratic forms $f$ with $10$ or more prime variables. The
general quadratic form in prime variables (or in dense sets) was
recently investigated by Cook \cite{Cook}, and by Keil
\cite{Keil}. In particular, the work of Keil \cite{Keil} handled
all regular quadratic forms in $17$ or more variables. It involves
only five primes variables for diagonal quadratic equation due to
the effective mean value theorem. This is similar to the problem
concerning diophantine equations for cubic forms. The works of
Baker \cite{Baker}, Vaughan \cite{V1,V2} and Wooley \cite{W1,W2}
can deal with the diagonal cubic equation with 7 variables.
However, more variables are involved for general cubic forms. One
can refer to the works of Heath-Brown \cite{HB1,HB} and Hooley
\cite{Hooley} for general cubic forms.

The purpose of this paper is to investigate general regular
quadratic forms in $9$ or more prime variables. We define
\begin{align*}N_{f,t}(X)=\sum_{\substack{1\le x_1,\ldots,x_n\le X \\ f(x_1,\ldots,x_n)=t}}\prod_{j=1}^n
\Lambda(x_j),\end{align*} where $\Lambda(\cdot)$ is the von
Mangoldt function. Our main result is the following.
\begin{theorem}\label{theorem1}
Let $f(x_1,\ldots,x_n)$ be a quadratic form given by (\ref{fn}),
and let $t\in \Z$. Let $\mathfrak{S}(f,t)$ and
$\mathfrak{I}_{f,t}(X)$ be defined in (\ref{defSeries}) and
(\ref{defIntegral}), respectively. Suppose that $\rank(A)\geqslant
9$, and $K$ is an arbitrary large real number. Then we have
\begin{align}\label{asymptotic}N_{f,t}(X)=\mathfrak{S}(f,t)\mathfrak{I}_{f,t}(X)+O(X^{n-2}\log^{-K}X),\end{align}
where the implied constant depends on $f$ and $K$.
\end{theorem}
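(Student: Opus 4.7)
The plan is to use the Hardy--Littlewood circle method, working with the exponential sum
\begin{equation*}
S(\alpha) \;=\; \sum_{1\le x_1,\ldots,x_n\le X}\Big(\prod_{j=1}^n \Lambda(x_j)\Big)\, e\big(\alpha f(x_1,\ldots,x_n)\big),
\end{equation*}
so that $N_{f,t}(X)=\int_0^1 S(\alpha) e(-\alpha t)\,d\alpha$. Fixing a parameter $P=(\log X)^B$ with $B=B(K)$ sufficiently large, I would dissect $[0,1]$ (suitably translated) into standard major arcs $\mathfrak{M}$ around fractions $a/q$ with $q\le P$, $(a,q)=1$, of half-width $PX^{-2}$, and the complementary minor arcs $\mathfrak{m}$.

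On $\mathfrak{M}$, I would sort each prime $x_j$ by its residue class modulo $q$ and invoke the Siegel--Walfisz theorem, obtaining an approximation $S(a/q+\beta)\approx W(a,q)\,V(\beta)$, where $W(a,q)$ is the complete exponential sum of $f$ over units modulo $q$ and $V(\beta)=\int_{[0,X]^n}e(\beta f(\bfx))\,d\bfx$. Summing over $a$ and $q$ and then completing the $\beta$-integral and the $q$-sum produces the main term $\mathfrak{S}(f,t)\,\mathfrak{I}_{f,t}(X)$. Since $\rank(A)\ge 9$, a Weil-type bound of the shape $|W(a,q)|\ll q^{-n/2+\varepsilon}$ is more than strong enough to guarantee absolute convergence of the singular series and to control the completion errors.

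The real work is the minor arc bound
\begin{equation*}
\int_{\mathfrak{m}} |S(\alpha)|\,d\alpha \;\ll\; X^{n-2}\log^{-K}X,
\end{equation*}
and this will be the main obstacle. Following the philosophy of Liu, Cook and Keil, I would combine Vaughan's identity with Weyl differencing. Vaughan's identity applied to one $\Lambda$-factor reduces $S(\alpha)$ to Type~I and Type~II bilinear pieces; Cauchy--Schwarz followed by Weyl differencing in a second variable then linearises the quadratic form, leaving inner sums of the shape
\begin{equation*}
T(\alpha;\bfh) \;=\; \sum_{\bfx\in[1,X]^{n-2}} e\big(2\alpha(A\bfh)\cdot\bfx\big),
\end{equation*}
which factor as products of one-variable geometric series, one for each nonzero coordinate of $A\bfh$. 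The hypothesis $\rank(A)\ge 9$ enters decisively here: it forces $A\bfh$ to have many nonzero coordinates for $\bfh$ outside a low-codimension exceptional locus, so $T(\alpha;\bfh)$ is small for $\alpha\in\mathfrak{m}$ and the total contribution from exceptional $\bfh$ is controlled.

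The hardest technical step will be extracting the arbitrary logarithmic saving $\log^{-K}X$ on $\mathfrak{m}$, since a bare Weyl/Vaughan estimate yields only a fixed power of $\log X$. To overcome this I expect to need a pruning argument bridging the wide major arcs with the minor arcs, together with an $L^{2}$ mean-value estimate for the differenced sums $T(\alpha;\bfh)$ that uses the rank hypothesis to its full strength (so that the number of $\bfh$ for which $A\bfh$ has few nonzero coordinates is negligibly small). Concatenating this refined minor arc bound with the major arc asymptotic then completes the proof of (\ref{asymptotic}).
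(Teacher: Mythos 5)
Your major-arc treatment matches the paper's (Siegel--Walfisz in residue classes, completion of the $\beta$-integral and the $q$-sum; note the paper only needs, and only proves, $B(q)\ll q^{-3/2+\varepsilon}$ rather than square-root cancellation in all $n$ variables). The gap is in the minor arcs, and it is structural rather than technical. Your central claim --- that Cauchy--Schwarz and Weyl differencing ``linearise the quadratic form, leaving inner sums $\sum_{\mathbf{x}}e(2\alpha(A\mathbf{h})\cdot\mathbf{x})$ which factor as geometric series'' --- fails for the diagonal part of $A$. After differencing $\mathbf{x}\mapsto\mathbf{x}+\mathbf{h}$, the $j$-th inner sum is $\sum_{x_j}\Lambda(x_j)\Lambda(x_j+h_j)e\big(2\alpha(A\mathbf{h})_j x_j\big)$: the von Mangoldt weights cannot be discarded in favour of a geometric series, and whenever $a_{j,j}\neq0$ the linear form $(A\mathbf{h})_j$ involves $h_j$ itself, so $x_j$ is coupled to its own difference variable. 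The extreme case is a diagonal $A$ of full rank: then $\rank(A)=n$ but no usable bilinear structure exists at all, and one is forced back to Vinogradov-type estimates for $\sum_{x}\Lambda(x)e(\alpha dx^{2}+\beta x)$. Hence $\rank(A)\ge 9$ is not the hypothesis that makes your $T(\alpha;\mathbf{h})$ small; what matters is the \emph{off-diagonal} rank, the maximal rank of a submatrix avoiding the diagonal, and this dichotomy is exactly what the paper is organised around.

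Concretely, the paper splits the minor-arc analysis into three cases: (i) some $5\times5$ submatrix meeting the diagonal index set in at most one place is invertible, handled in Section 6 by Cauchy--Schwarz and differencing applied only to the variables coupled through off-diagonal blocks, where a double integral over two copies of the minor arcs (Lemmas 6.1--6.3) is required because at $n=9$ the straightforward pointwise or $L^{2}$ bounds are not strong enough; (ii) $\rankoff(A)\le 3$, where the matrix is shown to be near-block-diagonal (Lemmas 5.8--5.12) and the arbitrary $\log^{-K}X$ saving comes from Lemma 4.3, the bound $\sum_{x\le X}\Lambda(x)e(\alpha dx^{2}+\beta x)\ll XL^{-K/5}$ uniform in $\beta$ for $\alpha\in\mathfrak{m}$, combined with auxiliary integral decouplings (Lemma 5.5) and mean-value counting (Lemmas 5.6--5.7); and (iii) $\rankoff(A)=4$ with no invertible submatrix as in (i), which is shown in Section 7 to force $\rank(A)\le 8$, a contradiction. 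Your sketch contains neither the case analysis nor the one-variable estimate that powers case (ii), and ``pruning plus an $L^{2}$ mean value'' does not by itself explain how a power-of-$Q$ saving (which, summed dyadically from $Q=L^{K}$, is what yields the arbitrary logarithmic saving) is to be extracted with only nine variables. As it stands, the minor-arc argument is a plan to have a plan rather than a proof.
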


We write $V_{f,t}$ for the affine quadric
$$\{(x_1,\ldots,x_n)^{T}\in \Z^n:\ f(x_1,\ldots,x_n)=t\}.$$ For a
set $S$, we define
\begin{align*}V_{f,t}(S)=\{(x_1,\ldots,x_n)^{T}\in S^n:\ f(x_1,\ldots,x_n)=t\}.\end{align*}
Denote by $\P$ the set of all prime numbers. For a prime $p\in
\P$, we use $\Z_p$ to denote the ring of $p$-adic integers. Let
\begin{align*}V_{f,t}^0(\Z_p)=\{(x_1,\ldots,x_n)^{T}\in V_{f,t}(\Z_p):\ x_1\cdots x_n\equiv 0\pmod{p}\}.\end{align*}
We say there are no local obstructions for $V$ if for any $p\in
\P$ one has
\begin{align*}V_{f,t}(\Z_p)=V_{f,t}^0(\Z_p).\end{align*}
The general local to global conjecture of Bourgain-Gamburd-Sarnak
\cite{BGS} asserts that $V_{f,t}(\P)$ is Zariski dense in
$V_{f,t}$ if and only if there are no local obstructions for
$V_{f,t}$. Theorem 1.1 of Liu \cite{Liu} verified this conjecture
for a wide class of regular indefinite integral quadratic forms
with 10 or more variables. Theorem \ref{theorem1} has the
following corollary, which improves upon Theorem 1.1 of Liu
\cite{Liu}.
\begin{corollary}\label{cor1}
Let $f(x_1,\ldots,x_n)$ be a regular indefinite integral quadratic
form with $n\geqslant 9$, and let $t\in \Z$. Then $V_{f,t}(\P)$ is
Zariski dense in $V_{f,t}$ if and only if there are no local
obstructions.
\end{corollary}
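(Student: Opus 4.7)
The plan is to derive the corollary from the asymptotic formula in Theorem~\ref{theorem1} by standard consequences of the circle method.

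For the easy direction, I assume $V_{f,t}(\P)$ is Zariski dense in $V_{f,t}$. Since $V_{f,t}$ has positive dimension, Zariski density forces $V_{f,t}(\P)$ to be infinite, so for every prime $p$ I can find a prime solution $(p_1,\dots,p_n)$ with no $p_i$ equal to $p$. Viewed inside $V_{f,t}(\Z_p)$, all its coordinates lie in $\Z_p^\times$, exhibiting an element of $V_{f,t}(\Z_p)\setminus V_{f,t}^0(\Z_p)$, i.e.\ witnessing the absence of a local obstruction at $p$.

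For the nontrivial direction, assume no local obstructions. A standard Hensel-lifting argument then shows that each local density $\mathfrak{S}_p$ is positive and that the singular series converges, hence $\mathfrak{S}(f,t)>0$; indefiniteness of $f$ combined with $n\ge 9$ yields $\mathfrak{I}_{f,t}(X)\gg X^{n-2}$. Applying Theorem~\ref{theorem1} with any fixed $K>n$ then gives
\begin{equation*}
N_{f,t}(X)=\mathfrak{S}(f,t)\,\mathfrak{I}_{f,t}(X)+O\bigl(X^{n-2}(\log X)^{-K}\bigr)\gg X^{n-2}.
\end{equation*}
Discarding the $O(X^{n-5/2})$ contribution from tuples containing a proper prime power leaves at least $\gg X^{n-2}/(\log X)^n$ genuine prime solutions of $f=t$ in $[1,X]^n$.

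To conclude Zariski density from this counting bound, I would argue by contradiction. Suppose $V_{f,t}(\P)\subseteq W$ for some proper closed subvariety $W\subsetneq V_{f,t}$; since $V_{f,t}$ is geometrically irreducible of dimension $n-1$ (as $\rank(A)\ge 3$), $\dim W\le n-2$. The raw bound $|W\cap [1,X]^n|=O(X^{n-2})$ merely matches our lower count, so I would instead rerun the circle-method proof of Theorem~\ref{theorem1} with the indicator $\mathbf{1}_{[1,X]^n}$ replaced by a smooth bump $w\bigl((\mathbf{x}-X\mathbf{x}_0)/(\eta X)\bigr)$ localized around any chosen real point $\mathbf{x}_0\in V_{f,t}(\R)\setminus W$. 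The same singular series $\mathfrak{S}(f,t)$ appears, while the localized singular integral remains of order $X^{n-2}$ by a standard implicit-function computation on the nonsingular real quadric $V_{f,t}(\R)$. This produces prime solutions in every Euclidean neighbourhood of $X\mathbf{x}_0$, which for $\eta$ small is disjoint from $W$, contradicting $V_{f,t}(\P)\subseteq W$.

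The main obstacle is this last step: the quantitative lower bound on $N_{f,t}(X)$ alone does not suffice, and one must verify that the circle-method argument behind Theorem~\ref{theorem1} is uniform in smooth Archimedean cutoffs so that the localized asymptotic, with the weighted Archimedean density in place of $\mathfrak{I}_{f,t}(X)$, remains valid with the same error saving. Once this robustness is granted (it is generically built into the circle-method apparatus used to prove Theorem~\ref{theorem1}), the corollary follows immediately.
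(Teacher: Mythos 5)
The paper gives no proof of this corollary—it is asserted as an immediate consequence of Theorem \ref{theorem1}, following Liu—so your proposal must stand on its own. Your architecture is the standard one, and the easy direction is fine (you are, sensibly, reading ``no local obstructions'' as the existence of a $\Z_p$-point with all coordinates in $\Z_p^{\times}$, which is what Lemma \ref{lemma33} actually needs; the displayed definition in the paper literally says the opposite). But two genuine gaps remain. First, $\mathfrak{I}_{f,t}(X)\gg X^{n-2}$ does not follow from indefiniteness: since prime points are positive, one needs the real cone $\{f=0\}$ to contain a nonsingular point of the open positive orthant. The form $f=2\sum_{i<j}x_ix_j$ is regular and indefinite yet strictly positive on $(0,\infty)^n$, so for suitable $t<0$ the singular integral is negligible and $V_{f,t}(\P)=\emptyset$ even though the finite local conditions can be satisfied; this Archimedean hypothesis is tacit in the corollary and must be invoked, not derived. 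Second, as you candidly admit, your passage from the lower bound on $N_{f,t}(X)$ to Zariski density is conditional on a localized version of Theorem \ref{theorem1} that is proved nowhere; you have correctly diagnosed that the raw bound $|W\cap[1,X]^n|=O(X^{n-2})$ for $\dim W\le n-2$ gives no contradiction, but ``once this robustness is granted'' is not a proof. If you do pursue the localization, the base point must be taken on the cone $\{f=0\}$ rather than on $V_{f,t}(\R)$ (whose dilates leave the variety), and disjointness of the box around $X\mathbf{x}_0$ from $W$ requires the top-degree part of a defining polynomial of $W$ to be nonzero at $\mathbf{x}_0$, not merely $\mathbf{x}_0\notin W$.

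There is a cheaper way to close the second gap that avoids re-running the circle method with weights. Suppose $V_{f,t}(\P)\subseteq W\subsetneq V_{f,t}$ with $W$ closed, and decompose $W$ into irreducible components, each of dimension at most $n-2$. A component of degree one is an affine-linear subspace contained in the quadric, hence has dimension at most $n-\lceil\rank(A)/2\rceil\le n-5$ by the Witt-index bound, contributing $O(X^{n-5})$ integral points in the box. A component of degree $d\ge 2$ and dimension $m\le n-2$ has $O(X^{m-1+1/d+\varepsilon})=O(X^{n-5/2+\varepsilon})$ integral points of height at most $X$ by Pila's theorem. Both totals are $o\bigl(X^{n-2}(\log X)^{-n}\bigr)$, contradicting your lower bound for the number of genuine prime solutions. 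This counting input is the decisive step your proposal leaves open.
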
  Corollary \ref{cor1} covers all regular indefinite integral quadratic
forms in 9 prime variables. The $O$-constant in the asymptotic
formula (\ref{asymptotic}) is independent of $t$. Therefore,
Theorem \ref{theorem1} implies the following result.
\begin{corollary}\label{cor2}
Let $f(x_1,\ldots,x_n)$ be a positive definite integral quadratic
form with $n\geqslant 9$. Then there exist $r, q\in \N$ so that
all sufficiently large natural numbers $N$, congruent to $r$
modulo $q$, can be represented as $N=f(p_1,\ldots,p_n)$, where
$p_1,\ldots,p_n$ are prime numbers.
\end{corollary}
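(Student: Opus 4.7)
The plan is to deduce Corollary \ref{cor2} directly from Theorem \ref{theorem1} by exhibiting an arithmetic progression of integers $N$ for which both the singular series $\mathfrak{S}(f,N)$ and the singular integral $\mathfrak{I}_{f,N}(X)$ produce a main term that dominates the uniform error $O(X^{n-2}\log^{-K}X)$. Since $f$ is positive definite with $n\ge 9$, the matrix $A$ has full rank $n\ge 9$, so Theorem \ref{theorem1} applies with $t=N$, and crucially the $O$-constant is independent of $N$.

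First I would fix the scale. Let $\lambda_{\min}>0$ be the smallest eigenvalue of $A$; then $f(x_1,\ldots,x_n)=N$ forces $|x_i|\le C\sqrt{N}$ for an explicit constant $C=C(f)$. Take $X=\lceil C\sqrt{N}\rceil$, so that $N_{f,N}(X)$ counts (weighted by $\prod\Lambda(p_i)$) exactly those prime representations of $N$ we care about, and $X\asymp \sqrt{N}$. A routine analysis of the singular integral
\[
\mathfrak{I}_{f,N}(X)=\int_{-\infty}^{\infty}\Bigl(\int_{[0,X]^n}e(\alpha(f(\bfx)-N))\,d\bfx\Bigr)d\alpha
\]
(as defined in the body of the paper at (\ref{defIntegral})) shows $\mathfrak{I}_{f,N}(X)\asymp_f X^{n-2}$ whenever $N$ lies in a compact subinterval of the cone $f([0,C]^n)$, which our choice of $X$ ensures. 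This is the standard volume computation for a nondegenerate positive definite form.

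Next I would produce $r,q\in\N$ such that $\mathfrak{S}(f,N)\gg_f 1$ uniformly for all $N\equiv r\pmod{q}$. Write $\mathfrak{S}(f,N)=\prod_p \sigma_p(f,N)$, where each $p$-adic density $\sigma_p(f,N)$ counts solutions of $f(\bfx)\equiv N\pmod{p^k}$ with $p\nmid x_1\cdots x_n$. For all but finitely many primes $p$ (specifically those not dividing $2\det(A)$), a standard Hensel-lifting and Weil-bound argument for quadrics over $\F_p$ in $n\ge 9\ge 5$ variables shows $\sigma_p(f,N)=1+O(p^{-(n-1)/2})$ uniformly in $N$, so the tail product converges absolutely and is bounded below. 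For each of the finitely many exceptional primes $p\in S$, one picks a prime-variable local solution $\bfx^{(p)}\in (\Z_p^{\times})^n$ of $f(\bfx^{(p)})=N_0$ for some residue $N_0\pmod{p^{k_p}}$; such a residue exists because $f$ restricted to $(\Z_p^{\times})^n$ is surjective onto a union of residue classes modulo $p^{k_p}$ when $n$ is large enough (nine variables is more than sufficient). Setting $q=\prod_{p\in S}p^{k_p}$ and using the Chinese Remainder Theorem to assemble the local residues into a single class $r\pmod q$, Hensel's lemma gives $\sigma_p(f,N)\gg_{f,p}1$ for $p\in S$ and every $N\equiv r\pmod q$, so $\mathfrak{S}(f,N)\gg_f 1$.

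Finally I would combine the three inputs. For $N\equiv r\pmod q$ we have
\[
N_{f,N}(X)=\mathfrak{S}(f,N)\,\mathfrak{I}_{f,N}(X)+O\!\bigl(X^{n-2}\log^{-K}X\bigr)\gg_f X^{n-2}+O\!\bigl(X^{n-2}\log^{-K}X\bigr),
\]
which is positive once $N$ (and hence $X\asymp\sqrt{N}$) is sufficiently large. A positive value of $N_{f,N}(X)$ forces the existence of primes $p_1,\ldots,p_n$ with $f(p_1,\ldots,p_n)=N$. I expect the main obstacle to be the construction of the residue class $(r,q)$: verifying that for each bad prime $p\mid 2\det(A)$ one can actually find a common residue class $N_0$ that is simultaneously represented by $f$ over $\Z_p^{\times}$ and compatible across primes. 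This is a finite check exploiting the generous hypothesis $n\ge 9$, which guarantees that the set of $N_0\in\Z_p$ hit by $f((\Z_p^{\times})^n)$ is a nonempty union of congruence classes with positive density; everything else (the tail of the singular series, the singular integral asymptotics, the extraction from the asymptotic formula) is routine.
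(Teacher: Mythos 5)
Your proposal is correct and follows essentially the same route the paper intends: the paper derives Corollary \ref{cor2} from the $t$-uniformity of the error term in (\ref{asymptotic}) together with the lower bound $\mathfrak{S}(f,t)\gg 1$ of Lemma \ref{lemma33} and the standard lower bound $\mathfrak{I}_{f,t}(X)\gg X^{n-2}$ for $X\asymp\sqrt{N}$. You have merely written out the details the paper leaves implicit (the choice of scale, the construction of the residue class $r\bmod q$ via Hensel lifting at the bad primes, and the CRT assembly), and these details are sound.
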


The method in this paper can be also applied to refine Theorem 1.1
of Keil \cite{Keil}. In particular, one can obtain a variant of
Theorem 1.1 of Keil \cite{Keil} for a wide class of quadratic
forms in 9 variables.

 \vskip3mm

\section{Notations}

As usual, we write $e(z)$ for $e^{2\pi iz}$. Throughout we assume
that $X$ is sufficiently large. Let $L=\log X$. We use $\ll$ and
$\gg$ to denote Vinogradov's well-known notation, while the
implied constants may depend on the form $f$. Denote by $\phi(q)$
the Euler function.

For a set $\mathcal{S}$, we denote by
\begin{align}\mathcal{S}^n=\{(x_1, \ldots,x_n)^{T}:\
x_1,\ldots,x_n\in \mathcal{S}\}.\end{align} We introduce the
notations for the set of $m$ by $n$ matrices
\begin{align}M_{m,n}(\mathcal{S})=\Big\{(a_{i,j})_{1\le i\le m,\,1\le j\le n}:\
a_{i,j}\in \mathcal{S}\Big\}\end{align} and the set of invertible
matrices of order $n$
\begin{align}GL_n(\mathcal{S})=\Big\{B\in M_{n,n}(\mathcal{S}):\
B \textrm{ is invertible}\Big\},\end{align} respectively. We
define the off-diagonal rank of $A$ as
\begin{align}\label{defoff}\rankoff(A)
=\max\{r:\ r\in R\},\end{align}where
\begin{align*}R=\Big\{\rank(B):\ B=(a_{i_k,j_l})_{1\le k,l\le r}\
\textrm{ with }\
\{i_1,\ldots,i_r\}\cap\{j_1,\cdots,j_r\}=\emptyset.\Big\}.\end{align*}
In other words, $\rankoff(A)$ is the maximal rank of a submatrix
in $A$, which does not contain any diagonal entries. For
$\mathbf{x}=(x_1,\ldots,x_n)^{T}\in \N^n$, we write
\begin{align*}\Lambda(\mathbf{x})=\Lambda(x_1)\cdots\Lambda(x_n).\end{align*}
For $\mathbf{x}=(x_1,\ldots,x_n)^{T}\in \Z^n$, we also use the
notation $\mathcal{A}(\mathbf{x})$ to indicate that the argument
$\mathcal{A}(x_j)$ holds for all $1\le j\le s$. The meaning will
be clear from the text. For example, we use $1\le \mathbf{x}\le X$
and $|\mathbf{x}|\le X$ to denote $1\le x_j\le X$ for $1\le j\le
n$ and $|x_j|\le X$ for $1\le j\le n$, respectively.

In order to apply the circle method, we introduce the exponential
sum
\begin{align}\label{defSalpha}S(\alpha)=
\sum_{1\le \mathbf{x}\le
X}\Lambda(\mathbf{x})e\big(\alpha\mathbf{x}{^T}A\mathbf{x}\big),\end{align}
where $A$ is defined in (\ref{defA}). We define
\begin{align}\label{defMQ}\mathcal{M}(Q)=\bigcup_{1\le q\le
Q}\bigcup_{\substack{1\le a\le q
 \\ (a,q)=1}}\mathcal{M}(q,a;Q),\end{align}
where
\begin{align*}\mathcal{M}(q,a;Q)=\Big\{\alpha:\ \big|\alpha-\frac{a}{q}\big|\le \frac{Q}{qX^2}\Big\}.\end{align*}
The intervals $\mathcal{M}(q,a;Q)$ are pairwise disjoint for $1\le
a\le q\le Q$ and $(a,q)=1$ provided that $Q\le X/2$. For $Q\le
X/2$, we set
\begin{align}\label{defmQ}\mathfrak{m}(Q)=\mathcal{M}(2Q)\setminus\mathcal{M}(Q).\end{align}
Now we introduce the major arcs defined as
\begin{align}\label{defM}\mathfrak{M}=\mathcal{M}(P) \ \ \textrm{ with }\ \ P=L^{K},\end{align}
where $K$ is a sufficiently large constant throughout this paper.
Then we define the minor arcs as
\begin{align}\label{defm}\mathfrak{m}=[X^{-1},1+X^{-1}]\setminus\mathfrak{M}.\end{align}

\vskip3mm
\section{The contribution from the major arcs}

For $q\in \N$ and $(a,q)=1$, we define \begin{align}
\label{defC}C(q,a)=\sum_{\substack{\mathbf{h}\in \N^n \\ 1\le
\mathbf{h}\le q
\\ (\mathbf{h},q)=1}}e\Big(\mathbf{h}^{T}A\mathbf{h}\frac{a}{q}\Big),\end{align}
where $A$ is given by (\ref{defA}). Let
\begin{align}
\label{defBq}B(q)=\frac{1}{\phi^n(q)}\sum_{\substack{1\le a\le q
\\ (a,q)=1}}C(q,a)e\Big(-\frac{at}{q}\Big).\end{align}
Concerning $B(q)$, we have the following two conclusions.
\begin{lemma}\label{lemma31}The arithmetic function $B(q)$ is multiplicative.
\end{lemma}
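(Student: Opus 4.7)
The plan is to prove multiplicativity directly by a Chinese Remainder Theorem factorization. Fix coprime $q_1,q_2\in\N$ and set $q=q_1q_2$. Choose integers $\bar q_1,\bar q_2$ with $\bar q_1 q_1\equiv 1\pmod{q_2}$ and $\bar q_2q_2\equiv 1\pmod{q_1}$. Then for each reduced residue $a$ modulo $q$, the pair $(a_1,a_2)=(a\bar q_2\bmod q_1,\,a\bar q_1\bmod q_2)$ ranges bijectively over reduced residues modulo $q_1$ and $q_2$ as $a$ runs over reduced residues mod $q$, and the key identity
\begin{align*}
\frac{a}{q_1q_2}\equiv\frac{a_1}{q_1}+\frac{a_2}{q_2}\pmod{1}
\end{align*}
holds. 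Likewise each coordinate $h_j$ of $\bfh$ with $1\le h_j\le q$ and $(h_j,q)=1$ is determined by the pair $(h_j^{(1)},h_j^{(2)})=(h_j\bmod q_1,\,h_j\bmod q_2)$, which ranges freely and bijectively over pairs of reduced residues.

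Next I would apply these parametrizations inside $C(q,a)$. Since the quadratic form $\bfh^TA\bfh$ is a polynomial in the $h_j$ with integer coefficients, reduction modulo $q_i$ gives $\bfh^TA\bfh\equiv (\bfh^{(i)})^TA\bfh^{(i)}\pmod{q_i}$ for $i=1,2$. Combined with the displayed identity, this yields
\begin{align*}
e\!\Big(\bfh^TA\bfh\,\tfrac{a}{q}\Big)=e\!\Big((\bfh^{(1)})^TA\bfh^{(1)}\tfrac{a_1}{q_1}\Big)\,e\!\Big((\bfh^{(2)})^TA\bfh^{(2)}\tfrac{a_2}{q_2}\Big),
\end{align*}
and summing over $\bfh$ and factoring the resulting product of $n$-fold sums over $\bfh^{(1)}$ and $\bfh^{(2)}$ gives $C(q,a)=C(q_1,a_1)C(q_2,a_2)$. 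The same CRT identity factors the linear exponential as $e(-at/q)=e(-a_1t/q_1)e(-a_2t/q_2)$.

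Substituting these factorizations into (\ref{defBq}) and using the bijective parametrization of reduced residues, the sum over $a\bmod q$ separates as a product of the sums over $a_1\bmod q_1$ and $a_2\bmod q_2$. Since $\phi^n(q_1q_2)=\phi^n(q_1)\phi^n(q_2)$, this yields $B(q_1q_2)=B(q_1)B(q_2)$, as required. No genuine obstacle is expected; the only thing to watch is the bookkeeping to check that the CRT correspondence really does give a bijection on reduced residues (both for $a$ and for each coordinate of $\bfh$), so that no extraneous terms appear in passing from $C(q,a)$ to $C(q_1,a_1)C(q_2,a_2)$.
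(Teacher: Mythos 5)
Your CRT argument is correct and complete: the identity $a/(q_1q_2)\equiv a_1/q_1+a_2/q_2 \pmod 1$ together with the coordinatewise bijection on reduced residues does factor $C(q,a)$, the twist $e(-at/q)$, and $\phi^n(q)$, giving $B(q_1q_2)=B(q_1)B(q_2)$. The paper omits the proof of this lemma as standard, and what you wrote is exactly the intended argument.
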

\begin{lemma}\label{lemma32}Let $B(q)$ be defined as (\ref{defBq}). If $\rank(A)\ge 5$, then we have
\begin{align*}
B(q)\ll_{A,\varepsilon}
q^{-3/2+\varepsilon}.\end{align*}\end{lemma}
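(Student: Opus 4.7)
By Lemma \ref{lemma31}, the function $B$ is multiplicative, so $B(q)=\prod_{p^k\|q}B(p^k)$, and it suffices to establish the uniform prime-power estimate $|B(p^k)|\ll_A p^{-3k/2+k\eps}$. The argument will give the strong bound $|B(p^k)|\ll_n p^{-3k/2}$ for all but a finite set $\Pi$ of ``bad'' primes depending only on $A$; at the remaining primes in $\Pi$ a more delicate analysis is required, and the finitely many such contributions can be absorbed into the $A$-dependent implicit constant together with the $q^{\eps}$ slack.

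\textbf{The pointwise Gauss sum estimate.} The heart of the proof is a pointwise bound on $C(p^k,a)$ for $(a,p)=1$:
\begin{align*}
|C(p^k,a)|\ll_n p^{k(n-r/2)},\qquad r:=\rank(A\bmod p).
\end{align*}
I would prove this by inclusion-exclusion on the coprimality condition $(h_j,p)=1$,
\begin{align*}
C(p^k,a)=\sum_{T\subseteq\{1,\ldots,n\}}(-1)^{|T|}S_T(p^k,a),\quad S_T(p^k,a):=\sum_{\substack{\mathbf{h}\bmod p^k\\ p\mid h_j,\,j\in T}}e\!\left(\frac{a\mathbf{h}^TA\mathbf{h}}{p^k}\right).
\end{align*}
Substituting $h_j=pg_j$ for $j\in T$ transforms each inner sum into a complete quadratic exponential sum whose coefficient matrix is $D_TAD_T$ with $D_T=\dia(\mathbf{1}_{T^c}+p\mathbf{1}_T)$. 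For primes $p\notin\Pi$, the matrix $A$ diagonalizes over $\Z_p$, so after an orthogonal change of variables (followed by a dual application of the Fourier expansion of the now-twisted coprimality conditions, and completion of squares) each $S_T$ reduces to a product of one-dimensional Gauss sums $G(c,p^m)$ of size $p^{m/2}$. A careful accounting of the $p$-adic valuations---together with the symmetric rank-drop inequality $\rank(A_{T^c}\bmod p)\ge r-2|T|$---gives $|S_T(p^k,a)|\ll_n p^{k(n-r/2)}$ uniformly in $T$, and summing over the $2^n$ subsets yields the pointwise bound.

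\textbf{Conclusion and main obstacle.} Granted the pointwise estimate, the lemma follows for $p\notin\Pi$ from
\begin{align*}
|B(p^k)|\le\frac{\phi(p^k)\max_{(a,p)=1}|C(p^k,a)|}{\phi(p^k)^n}\ll\frac{p^{k(n-r/2)}}{\phi(p^k)^{n-1}}\ll p^{k(1-r/2)}\le p^{-3k/2},
\end{align*}
using $r\ge 5$. The main technical obstacle is twofold. First, the Gauss sum analysis at higher prime powers $k\ge 2$ must contend with mixed-scale sums at the moduli $p^k$, $p^{k-1}$ and $p^{k-2}$ appearing in the three blocks (both in $T^c$, mixed, both in $T$) of $\mathbf{h}^TA\mathbf{h}$; the unwinding of the completions of squares must be done carefully to avoid spurious degeneracies in the inclusion-exclusion. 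Second, at the finitely many bad primes $p\in\Pi$---where $\Z_p$-diagonalization may fail or $\rank(A\bmod p)$ may drop below $5$---one needs a separate analysis based on the Smith normal form of $A$ over $\Z_p$ to recover the same $p^{-3k/2+k\eps}$ decay rate (perhaps with a worse $p$-dependent constant). Since $\Pi$ is finite and depends only on $A$, the resulting $A$-dependent loss is absorbed into the implicit constant in the statement.
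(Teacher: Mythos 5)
The paper states Lemma \ref{lemma32} without proof (it is a standard singular-series estimate of the type established in Liu \cite{Liu}), so there is no argument in the text to compare against; judged on its own, your proposal follows the standard route and is sound in outline. The reduction via multiplicativity, the pointwise bound $|C(p^k,a)|\ll p^{k(n-r/2)}$ with $r=\rank(A\bmod p)$ obtained by inclusion--exclusion on the coprimality condition and $\Z_p$-diagonalization, and the final computation $|B(p^k)|\ll p^{k(1-r/2)}\le p^{-3k/2}$ are all correct, and you correctly identify the two points that need genuine care: the uniform-in-$T$ estimate for $S_T$ (which does hold, since restricting to a coordinate subspace of codimension $|T|$ drops the rank mod $p$ by at most $2|T|$ while the substitution $h_j=pg_j$ saves a compensating power of $p$), and the finitely many primes where $\rank(A\bmod p)<5$, where diagonalizing over $\Z_p$ (odd $p$; block-diagonalizing for $p=2$) shows the $p$-adic valuations of the diagonal entries are $O_A(1)$, so only an $A$-dependent constant is lost. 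What you have is a correct strategy with the hardest bookkeeping deferred rather than a complete proof, but no step is wrong.
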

Now we introduce the singular series $\mathfrak{S}(f,t)$ defined
as\begin{align}\label{defSeries}\mathfrak{S}(f,t)=\sum_{q=1}^\infty
B(q),\end{align} where $B(q)$ is given by (\ref{defBq}). From
Lemmas \ref{lemma31} and \ref{lemma32}, we conclude the following
result.
\begin{lemma}\label{lemma33}Suppose that $\rank(A)\ge 5$.
Then the singular series $\mathfrak{S}(f,t)$ is absolutely
convergent, and
\begin{align*}\mathfrak{S}(f,t)=\prod_{p\in \P}\chi_p(f,t),\end{align*}
where the local densities $\chi_p(f,t)$ are defined as
\begin{align*}\chi_p(f,t)=1+\sum_{m=1}^\infty
B(p^m).\end{align*} Moreover, if there are no local obstructions,
then one has\begin{align*}\mathfrak{S}(f,t)\gg 1.\end{align*}
\end{lemma}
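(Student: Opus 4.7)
The plan is to combine Lemmas \ref{lemma31} and \ref{lemma32} with a $p$-adic density argument. Absolute convergence is immediate from Lemma \ref{lemma32}: one has $\sum_{q=1}^\infty |B(q)| \ll \sum_q q^{-3/2+\varepsilon} < \infty$. Since $B$ is multiplicative by Lemma \ref{lemma31}, a standard rearrangement of this absolutely convergent series yields the Euler product $\mathfrak{S}(f,t) = \prod_{p\in\P}\bigl(1 + \sum_{m\ge 1} B(p^m)\bigr) = \prod_{p\in\P} \chi_p(f,t)$.

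For the lower bound $\mathfrak{S}(f,t) \gg 1$, I would first dispose of the tail of the product. The bound from Lemma \ref{lemma32} gives $|\chi_p(f,t) - 1| \le \sum_{m \ge 1} |B(p^m)| \ll p^{-3/2+\varepsilon}$ uniformly in $p$, so there is a constant $p_0 = p_0(f)$ with $\chi_p(f,t) \ge 1/2$ for every $p > p_0$. Consequently $\prod_{p > p_0} \chi_p(f,t)$ is bounded away from zero (the product even converges absolutely, since $\sum_p |\chi_p(f,t) - 1|$ is finite), and the remaining task is to verify $\chi_p(f,t) > 0$ for each of the finitely many primes $p \le p_0$.

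To handle the small primes, I would identify $\chi_p(f,t)$ with a $p$-adic density of primitive solutions. Opening the definition of $B(p^k)$ and regrouping the sum over $a\bmod p^M$ according to $v_p(a)$, orthogonality yields an identity of the shape
$$1 + \sum_{k=1}^{M} B(p^k) \;=\; \frac{N^*(p^M)}{(1-1/p)^n\, p^{M(n-1)}},$$
where $N^*(p^M)$ counts $\mathbf{h} \in (\Z/p^M\Z)^n$ with $(h_j,p)=1$ for all $j$ and $f(\mathbf{h}) \equiv t \pmod{p^M}$. Letting $M \to \infty$ then exhibits $\chi_p(f,t)$ as the normalized density of $\Z_p$-solutions of $f = t$ whose coordinates are all units. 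The no-local-obstruction hypothesis furnishes such a solution $\mathbf{h}_0$, and I expect the main obstacle to be the ensuing Hensel-lifting step: one must produce a unit-coordinate solution (either $\mathbf{h}_0$ itself or a nearby one) at which some partial derivative $\partial f/\partial x_j$ is a $p$-adic unit, so that it lifts uniquely to every level $M$ and forces $N^*(p^M) \gg p^{M(n-1)}$. With $n \ge 9$ variables and $\rank(A) \ge 5$ there is ample room to perturb $\mathbf{h}_0$ within the unit-coordinate locus while achieving nonsingularity, though this perturbation has to be arranged with some care at each of the finitely many small primes $p \le p_0$.
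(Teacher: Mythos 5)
Your proposal follows the standard route, which is also the one the paper tacitly relies on: the paper gives no written proof of Lemma \ref{lemma33} at all, asserting only that it follows from Lemmas \ref{lemma31} and \ref{lemma32}, so your treatment of absolute convergence and of the Euler product is exactly what is intended, and your identity
\begin{align*}
1+\sum_{k=1}^{M}B(p^k)=\frac{N^*(p^M)}{(1-1/p)^n\,p^{M(n-1)}}
\end{align*}
is correct (the $-1$ produced by the Ramanujan-type sums over $1\le k\le M$ cancels the leading $1$ precisely because $\#\{\mathbf{h}\bmod p^M:(\mathbf{h},p)=1\}=p^{Mn}(1-1/p)^n$). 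The one step you leave open --- producing a nonsingular unit-coordinate point --- does close, and more easily than you suggest. Quantitative Hensel lifting does not require a partial derivative that is a $p$-adic \emph{unit}, only a nonzero gradient: if $\mathbf{h}_0\in(\Z_p^{\times})^n$ satisfies $f(\mathbf{h}_0)=t$ and $2A\mathbf{h}_0\neq 0$, with $c$ the minimal valuation of its entries, then every class $\mathbf{h}\equiv\mathbf{h}_0\ (\mathrm{mod}\ p^{2c+1})$ lifts stably, giving $N^*(p^M)\gg_p p^{M(n-1)}$ with no perturbation whatsoever. The only failure mode is $A\mathbf{h}_0=0$, which forces $t=\mathbf{h}_0^{T}A\mathbf{h}_0=0$; here $\rank(A)\ge 5$ enters, not through generic perturbation but through isotropy: splitting $\Q_p^n=\ker A\oplus W$ with $f|_W$ nondegenerate of rank $\ge 5$, the classical fact that $p$-adic forms in five or more variables are isotropic yields $\mathbf{w}\in\Z_p^n$ with $f(\mathbf{w})=0$ and $A\mathbf{w}\neq 0$, and then $\mathbf{h}_0+p\mathbf{w}$ is a unit-coordinate zero of $f$ with nonvanishing gradient, reducing to the previous case. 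Finally, note that the paper's displayed no-local-obstruction condition $V_{f,t}(\Z_p)=V^0_{f,t}(\Z_p)$ must be read as $V_{f,t}(\Z_p)\neq V^0_{f,t}(\Z_p)$ (an evident typo); it is this inequality that furnishes your point $\mathbf{h}_0$.
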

We define
\begin{align}I(\beta)=\int_{[1,X]^n}e\big(\beta\mathbf{x}^{T}A\mathbf{x}\big)d\mathbf{x}.\end{align}
Since $I(\beta)\ll X^n(1+X^2|\beta|)^{-2}$ for $\rank(A)\ge 4$, we
introduce the singular integral
\begin{align}\label{defIntegral}&\mathfrak{I}_{f,t}(X)=\int_{-\infty}^{\infty}
I(\beta) e(-t\beta)d\beta,\end{align} where
$f(\mathbf{x})=\mathbf{x}^TA\mathbf{x}$.
\begin{lemma}\label{lemma34}Let $t\in \Z$, and let \begin{align*}S(\alpha)=
\sum_{1\le \mathbf{x}\le
X}\Lambda(\mathbf{x})e\big(\alpha\mathbf{x}{^T}A\mathbf{x}\big),\end{align*}
where $A\in M_{n,n}(\Z)$ is a symmetric matrix with $\rank(A)\ge
5$. Then one
has\begin{align}\label{major}\int_{\mathfrak{M}}S(\alpha)e(-t\alpha)d\alpha
=&\mathfrak{S}(f,t)\mathfrak{I}_{f,t}(X)+O(X^{n-2}L^{-K/4}).\end{align}\end{lemma}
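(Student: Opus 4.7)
The approach is the standard circle-method major-arc analysis adapted to the $\Lambda$-weighted sum. Fix $\alpha \in \mathfrak{M}$; by the disjointness of the $\mathcal{M}(q,a;P)$ we may write $\alpha = a/q + \beta$ with $1 \le a \le q \le P = L^K$, $(a,q) = 1$, and $|\beta| \le P/(qX^2)$. Since $\mathbf{x}^T A \mathbf{x} \pmod{q}$ depends only on $\mathbf{x} \pmod{q}$, I would decompose
\begin{align*}
S(\alpha) = \sum_{1 \le \mathbf{h} \le q} e\Big(\frac{a}{q}\mathbf{h}^T A \mathbf{h}\Big)\, T(\mathbf{h};\beta,q), \qquad T(\mathbf{h};\beta,q) := \sum_{\substack{1 \le \mathbf{x} \le X \\ \mathbf{x} \equiv \mathbf{h} \pmod{q}}} \Lambda(\mathbf{x})\, e(\beta \mathbf{x}^T A \mathbf{x}).
\end{align*}
Classes with $(h_j,q) > 1$ for some $j$ are negligible, because the only prime powers congruent to such $h_j$ modulo $q$ are powers of primes dividing $q$; a direct estimate bounds their contribution to $|S(\alpha)|$ by $O(q X^{n-1} L^2)$, and when integrated against $|\mathfrak{M}| \ll P^3/X^2$ this leaves the harmless error $O(X^{n-3} L^{O(K)})$.

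For $(\mathbf{h},q)=1$ I would apply the Siegel--Walfisz theorem
\begin{align*}
\sum_{\substack{1 \le y \le Y \\ y \equiv h \pmod{q}}} \Lambda(y) = \frac{Y}{\phi(q)} + O\big(Y \exp(-c\sqrt{L})\big) \qquad ((h,q)=1,\ q \le L^K),
\end{align*}
combined with Abel summation, one variable at a time. Viewing $e(\beta \mathbf{x}^T A \mathbf{x})$ as a smooth function of $x_j$ (with the remaining coordinates either already integrated or still to be summed), its derivative is bounded by $|\beta| X \ll P/X$ on the major arcs, so each replacement of $\sum_{x_j \equiv h_j \pmod q} \Lambda(x_j)(\cdot)$ by $\phi(q)^{-1} \int (\cdot)\, dx_j$ costs a factor of $O((1+|\beta|X^2) X \exp(-c\sqrt{L}))$. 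Iterating through $j=1,\dots,n$ and using $|\beta|X^2 \ll P/q$ together with the trivial bound $\sum_{x \equiv h \pmod q} \Lambda(x) \ll X/\phi(q)$ on the pending sums yields
\begin{align*}
T(\mathbf{h};\beta,q) = \phi(q)^{-n} I(\beta) + O\big(\phi(q)^{-(n-1)} X^n \exp(-c'\sqrt{L})\big).
\end{align*}
Summing over $\mathbf{h}$ collapses the arithmetic phase into $C(q,a)$ by (\ref{defC}), producing
\begin{align*}
S(\alpha) = \phi(q)^{-n} C(q,a)\, I(\beta) + O\big(q X^n \exp(-c''\sqrt{L})\big)
\end{align*}
uniformly on $\mathcal{M}(q,a;P)$.

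Third, I multiply by $e(-t\alpha) = e(-ta/q) e(-t\beta)$ and integrate over $\mathfrak{M}$. By (\ref{defBq}) the main contribution is $\sum_{q \le P} B(q) \int_{|\beta| \le P/(qX^2)} I(\beta) e(-t\beta)\, d\beta$. Using the tail bound $I(\beta) \ll X^n (1 + X^2|\beta|)^{-2}$, extending each $\beta$-integral to $\R$ loses $\ll X^{n-2} q/P$, and combined with Lemma \ref{lemma32} the total completion loss is $\ll X^{n-2} P^{-1/2+\eps}$; extending $\sum_{q \le P}$ to $\mathfrak{S}(f,t)$ costs another $\ll X^{n-2} P^{-1/2+\eps}$ by the same lemma. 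The remainder from the previous paragraph, integrated over $|\mathfrak{M}| \ll P^3/X^2$, contributes $\ll P^3 X^{n-2} \exp(-c''\sqrt{L}) = O(X^{n-2} L^{-M})$ for every $M>0$. Adding these gives (\ref{major}) with the stated error.

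The main obstacle is the bookkeeping in the second paragraph: carrying out the $n$-fold iteration of Siegel--Walfisz with a coupled quadratic weight requires that, at each stage, the derivative bound $|\partial_{x_j}(\beta \mathbf{x}^T A \mathbf{x})| \ll P/X$ be balanced against the saved factor $1/\phi(q)$, so that the cumulative loss in $S(\alpha)$ is of order $qX^n\exp(-c''\sqrt{L})$ rather than, say, $(X\exp(-c\sqrt{L}))^n$. Once this is in hand, the remaining integrations and completions are routine circle-method manipulations.
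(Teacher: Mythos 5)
Your proposal is correct and follows essentially the same route as the paper: split $\alpha=a/q+\beta$, restrict to reduced residue classes (discarding the $(\mathbf{h},q)>1$ classes trivially), apply Siegel--Walfisz with partial summation to replace each $\Lambda$-sum by $\phi(q)^{-1}\int$, collapse the arithmetic phases into $C(q,a)$, and then complete the $\beta$-integral and the $q$-sum using $I(\beta)\ll X^n(1+X^2|\beta|)^{-2}$ and Lemma \ref{lemma32}. Your second paragraph simply spells out the iteration that the paper compresses into the phrase ``the Siegel--Walfisz theorem together with integration by parts,'' and your error bookkeeping lands comfortably inside the stated $O(X^{n-2}L^{-K/4})$.
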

\begin{proof}
We write $f(\mathbf{x})$ for $\mathbf{x}{^T}A\mathbf{x}$. By the
definition of $\mathfrak{M}$, one has
\begin{align}\label{major1}&\int_{\mathfrak{M}}S(\alpha)e(-t\alpha)d\alpha\notag
\\ =&\sum_{q\le P}\sum_{\substack{1\le a\le q\\ (a,q)=1}}\int_{|\beta|\le \frac{P}{qX^2}}
\sum_{1\le \mathbf{x}\le
X}\Lambda(\mathbf{x})e\Big(f(\mathbf{x})(\frac{a}{q}+\beta)\Big)e\Big(-t(\frac{a}{q}+\beta)\Big)d\beta.\end{align}
We introduce the congruence condition to deduce that
\begin{align*}&
\sum_{1\le \mathbf{x}\le
X}\Lambda(\mathbf{x})e\Big(f(\mathbf{x})(\frac{a}{q}+\beta)\Big)
\\= &\sum_{1\le \mathbf{h}\le q}e\Big(f(\mathbf{h})\frac{a}{q}\Big)
\sum_{\substack{1\le \mathbf{x}\le X\\
\mathbf{x}\equiv
\mathbf{h}\pmod{q}}}\Lambda(\mathbf{x})e\big(f(\mathbf{x})\beta\big)
\\= &\sum_{\substack{1\le \mathbf{h}\le q\\ (\mathbf{h},q)=1}}e\Big(f(\mathbf{h})\frac{a}{q}\Big)
\sum_{\substack{1\le \mathbf{x}\le X\\
\mathbf{x}\equiv
\mathbf{h}\pmod{q}}}\Lambda(\mathbf{x})e\big(f(\mathbf{x})\beta\big)+O(X^{n-1}L^2P).\end{align*}
Since $q\le P=L^{K}$, the Siegel-Walfisz theorem together with
integration by parts will imply for $(\mathbf{h},q)=1$ that
\begin{align*}
\sum_{\substack{1\le \mathbf{x}\le X\\
\mathbf{x}\equiv
\mathbf{h}\pmod{q}}}\Lambda(\mathbf{x})e\big(f(\mathbf{x})\beta\big)=&
\frac{1}{\phi^n(q)}\int_{[1,X]^n}e\big(f(\mathbf{x})\beta\big)d\mathbf{x}+O(X^{n}L^{-100K})
\\ =&
\frac{1}{\phi^n(q)}I(\beta)+O(X^{n}L^{-100K}).\end{align*} It
follows from above
\begin{align}\label{intand}
\sum_{1\le \mathbf{x}\le
X}\Lambda(\mathbf{x})e\Big(f(\mathbf{x})(\frac{a}{q}+\beta)\Big)
=&\frac{C(q,a)}{\phi^n(q)}I(\beta)+O(X^{n}L^{-10K}).\end{align} By
putting (\ref{intand}) into (\ref{major1}), we obtain
\begin{align}\label{major2}\int_{\mathfrak{M}}S(\alpha)e(-t\alpha)d\alpha
 =\sum_{q\le P}B(q)\int_{|\beta|\le \frac{P}{qX^2}}I(\beta)
 e(-t\beta)d\beta+O(X^{n}L^{-K}).\end{align} It follows from $I(\beta)\ll X^n(1+X^2|\beta|)^{-2}$ that
\begin{align}\label{upper}&\mathfrak{I}_{f,t}(X)\ll X^{n-2}\end{align}
and
\begin{align}\label{major3}&\int_{|\beta|\le \frac{P}{qX^2}}
I(\beta)e(-t\beta)d\beta=\mathfrak{I}_{f,t}(X)+O(qX^{n-2}P^{-1}).\end{align}
Combining (\ref{major2}), (\ref{upper}), (\ref{major3}) together
with Lemma \ref{lemma32}, we
conclude\begin{align*}\int_{\mathfrak{M}}S(\alpha)e(-t\alpha)d\alpha
=&\mathfrak{S}(f,t)\mathfrak{I}_{f,t}(X)+O(X^{n-2}L^{-K/4}).\end{align*}
The proof of Lemma \ref{lemma34} is completed.
\end{proof}

 \vskip3mm

\section{Estimates for exponential sums}
\begin{lemma}\label{lemma41}
Let $\{\xi_{z}\}$ be a sequence satisfying $|\xi_{z}|\le 1$. Then
one has
\begin{align*}\sum_{|y|\ll X}\Big|\sum_{|z|\ll X}\xi_{z}e(\alpha y z)\Big|^2\ll X\sum_{|x|\ll X}
\min\{X,\ \|x\alpha\|^{-1}\}.\end{align*}
\end{lemma}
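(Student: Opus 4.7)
The plan is to expand the square, interchange the order of summation, collect terms according to the difference of the inner summation variables, and then bound what remains by a standard geometric series estimate.

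First I would open the square to write
\begin{align*}
\sum_{|y|\ll X}\Big|\sum_{|z|\ll X}\xi_{z}e(\alpha y z)\Big|^2
=\sum_{|z_1|\ll X}\sum_{|z_2|\ll X}\xi_{z_1}\overline{\xi_{z_2}}\sum_{|y|\ll X}e\bigl(\alpha y(z_1-z_2)\bigr).
\end{align*}
Substituting $x=z_1-z_2$, the double sum over $(z_1,z_2)$ groups into at most $O(X)$ ordered pairs for each fixed $x$ satisfying $|x|\ll X$, because once $x$ is chosen, $z_1$ ranges over an interval of length $\ll X$ and $z_2=z_1-x$ is determined. Since $|\xi_{z_1}\overline{\xi_{z_2}}|\le 1$, the inner coefficient in front of the $y$-sum is bounded by $O(X)$ in absolute value.

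The sum over $y$ is a geometric progression in $e(\alpha x)$, so by the standard estimate for incomplete geometric sums one has
\begin{align*}
\Big|\sum_{|y|\ll X}e(\alpha y x)\Big|\ll \min\{X,\ \|x\alpha\|^{-1}\}.
\end{align*}
Combining the two bounds, the double sum is majorized by
\begin{align*}
\sum_{|x|\ll X}X\cdot\min\{X,\ \|x\alpha\|^{-1}\}=X\sum_{|x|\ll X}\min\{X,\ \|x\alpha\|^{-1}\},
\end{align*}
which is exactly the required estimate.

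There is no substantial obstacle here: the only point to keep track of is that, after the change of variable $x=z_1-z_2$, the number of pairs $(z_1,z_2)$ with $z_1-z_2=x$ and $|z_1|,|z_2|\ll X$ is $O(X)$, uniformly in $x$. Everything else is routine: opening the square, swapping summations, and invoking the geometric-sum bound. The argument does not use any special structure of $\{\xi_z\}$ beyond the pointwise bound $|\xi_z|\le 1$, and it is uniform in $\alpha$, so the lemma will be available for arbitrary real $\alpha$ in the minor arc analysis that presumably follows.
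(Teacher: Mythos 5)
Your proof is correct and follows the same route as the paper: expand the square, substitute $x=z_1-z_2$ (noting the $O(X)$ pairs for each fixed difference), and apply the standard geometric-series bound $\bigl|\sum_{|y|\ll X}e(\alpha yx)\bigr|\ll\min\{X,\ \|x\alpha\|^{-1}\}$. Nothing is missing.
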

\begin{proof}We expand the square to deduce that
\begin{align*}\sum_{|y|\ll X}\Big|\sum_{|z|\ll X}\xi_{z}e(\alpha y
z)\Big|^2=& \sum_{|z_1|\ll X}\sum_{|z_2|\ll
X}\xi_{z_1}\overline{\xi_{z_2}}\sum_{|y|\ll X}e\big(\alpha y
(z_1-z_2)\big)
\\ \le & \sum_{|z_1|\ll X}\sum_{|z_2|\ll
X}\Big|\sum_{|y|\ll X}e\big(\alpha y
(z_1-z_2)\big)\Big|.\end{align*} By changing variables, one can
obtain
\begin{align*}\sum_{|y|\ll X}\Big|\sum_{|z|\ll X}\xi_{z}e(\alpha y
z)\Big|^2 \ll & \sum_{|z|\ll X}\sum_{|x|\ll X}\Big|\sum_{|y|\ll
X}e\big(\alpha y x\big)\Big| \\ \ll & X\sum_{|x|\ll
X}\Big|\sum_{|y|\ll X}e\big(\alpha y x\big)\Big| \\ \ll &
X\sum_{|x|\ll X}\min\{X,\ \|x\alpha\|^{-1}\}.\end{align*} We
complete the proof.\end{proof}

\begin{lemma}\label{lemma42}For $\alpha\in \mathfrak{m}(Q)$, one has
\begin{align*}\sum_{|x|\ll X}\min\{X,\ \|x\alpha\|^{-1}\}\ll LQ^{-1}X^2.\end{align*}\end{lemma}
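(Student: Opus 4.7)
The plan is to extract a rational approximation to $\alpha$ whose denominator is forced to exceed $Q$ by combining Dirichlet's theorem with the hypothesis $\alpha\notin\mathcal{M}(Q)$, and then to apply a classical exponential-sum estimate.

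First, I would apply Dirichlet's approximation theorem with $N=\lfloor X^2/Q\rfloor+1$, noting that $X^2/Q\ge 2X$ for $Q\le X/2$. This yields coprime integers $a',q'$ with $1\le q'\le N$ and
\[
\Bigl|\alpha-\frac{a'}{q'}\Bigr|\le\frac{1}{q'N}\le\frac{Q}{q'X^2}.
\]

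Next, I would exploit the hypothesis $\alpha\in\mathfrak{m}(Q)=\mathcal{M}(2Q)\setminus\mathcal{M}(Q)$. If $q'\le Q$, the displayed inequality would place $\alpha$ in $\mathcal{M}(q',a';Q)\subseteq\mathcal{M}(Q)$, contradicting $\alpha\notin\mathcal{M}(Q)$. Hence $Q<q'\le N\ll X^2/Q$; moreover, $q'\le N$ together with the above bound yields $|\alpha-a'/q'|\le 1/(q'N)\le 1/q'^2$, so $(a',q')$ satisfies the hypothesis of the standard estimate.

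Finally, I would invoke the classical Vinogradov--Vaughan inequality (cf.\ Vaughan, \emph{The Hardy--Littlewood Method}, Lemma~2.2): for coprime $(a',q')$ with $|\alpha-a'/q'|\le 1/q'^2$,
\[
\sum_{|x|\ll X}\min\bigl(X,\,\|x\alpha\|^{-1}\bigr)\ll \Bigl(\frac{X^2}{q'}+X+q'\Bigr)\log(2q'X).
\]
Since $Q<q'\ll X^2/Q$ and $Q\le X/2$, each of the three terms $X^2/q'$, $X$, $q'$ is $\ll X^2/Q$, while the logarithmic factor is $\ll L$; this yields the asserted bound $\ll LX^2/Q$.

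The entire argument is a single well-chosen Dirichlet application followed by a black-box exponential-sum estimate. The only real subtlety is the choice $N\asymp X^2/Q$: this is the unique scale that simultaneously forces $q'>Q$ (via $\alpha\notin\mathcal{M}(Q)$) and keeps $q'\le N\ll X^2/Q$, so that the $q'$-term in the Vinogradov--Vaughan bound is controlled by $X^2/Q$. I do not foresee any serious technical obstacle.
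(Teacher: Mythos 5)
Your proof is correct. It reaches the same black-box estimate (Vaughan, Lemma~2.2) but gets there by a slightly different route than the paper. The paper uses the rational approximation that comes for free from $\alpha\in\mathcal{M}(2Q)$ (so $q\le 2Q$ with $|\beta|=|\alpha-a/q|\le 2Q/(qX^2)$), and therefore needs the refined form of the lemma containing the factors $q(1+X^2|\beta|)$, exploiting the dichotomy ``$q>Q$ or $|\beta|>Q/(qX^2)$'' to make that quantity exceed $Q$. You instead make a fresh Dirichlet application at the scale $N\asymp X^2/Q$, which forces the denominator to satisfy $Q<q'\le N$ and lets you quote the plain form $\ll (X^2/q'+X+q')\log(2q'X)$; the choice of $N$ simultaneously caps $q'$ from above so the term $q'$ is also $\ll X^2/Q$. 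The two arguments are of equal strength and difficulty; yours avoids the $(1+X^2|\beta|)$ refinement, while the paper's avoids a second appeal to Dirichlet. One small point common to both write-ups: to conclude from $q'\le Q$ and $|\alpha-a'/q'|\le Q/(q'X^2)$ that $\alpha\in\mathcal{M}(Q)$ one should note that $a'$ can be taken with $1\le a'\le q'$, which follows since $\alpha\in\mathcal{M}(2Q)$ keeps $\alpha$ bounded away from $0$ and inside $(0,1]$ up to the admissible interval widths; this is routine and the paper glosses over it as well.
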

\begin{proof}For $\alpha\in \mathfrak{m}(Q)$, there exist $a$ and $q$ such that $1\le a\le q\le 2Q$, $(a,q)=1$ and
$|\alpha-a/q|\le 2Q(qX^2)^{-1}$. By a variant of Lemma 2.2 of
Vaughan \cite{V} (see also Exercise 2 in Chapter 2 \cite{V}), one
has
\begin{align*}\sum_{|x|\ll X}\min\{X,\ \|x\alpha\|^{-1}\}\ll
LX^2\Big(\frac{1}{q(1+X^2|\beta|)}+\frac{1}{X}+\frac{q(1+X^2|\beta|)}{X^2}\Big).\end{align*}
Since $\alpha\in \mathfrak{m}(Q)$, one has either $q>Q$ or
$|\alpha-a/q|>Q(qX^2)^{-1}$. Then the desired estimate follows
immediately.\end{proof}

\begin{lemma}\label{lemma43}Let $\alpha\in \mathfrak{m}$ and $\beta\in\R$. For $d\in \Q$, we define
\begin{align}\label{squareesitmate}f(\alpha,\,\beta)
=\sum_{1\le x\le X}\Lambda(x)e(\alpha d x^2+x\beta).\end{align}If
$d\not=0$, then one has
\begin{align*}f(\alpha,\,\beta) \ll
XL^{-K/5},\end{align*} where the implied constant depends only on
$d$ and $K$.
\end{lemma}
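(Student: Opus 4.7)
The plan is to apply Vaughan's identity to the von Mangoldt weight and then estimate each resulting piece by combining Weyl differencing with the minor-arc input from Lemmas \ref{lemma41} and \ref{lemma42}.

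I first extract the consequence of $\alpha\in\mathfrak{m}$ that will drive the argument. By Dirichlet's theorem together with (\ref{defm}) and (\ref{defmQ}), there exists $Q$ with $P\le Q\le X/2$ such that $\alpha\in\mathfrak{m}(Q)$. Lemma \ref{lemma42} then yields
$$\sum_{|x|\ll X}\min\{X,\|x\alpha\|^{-1}\}\ll LX^{2}Q^{-1}\ll L^{1-K}X^{2},$$
and the analogous estimate, with implied constant depending on $d$, holds after $\alpha$ is replaced by any fixed nonzero rational multiple such as $2d\alpha$.

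Next I invoke Vaughan's identity with parameters $U=V=L^{cK}$ for a small positive constant $c$ to be chosen. This decomposes $f(\alpha,\beta)$ into Type I contributions
$$S_{I}=\sum_{m\le UV}\xi_{m}\sum_{l\le X/m}e\bigl(\alpha dm^{2}l^{2}+\beta ml\bigr),\qquad|\xi_{m}|\ll L,$$
and Type II contributions
$$S_{II}=\sum_{U<m\le X/V}\sum_{V<l\le X/m}\eta_{m}\zeta_{l}\,e\bigl(\alpha dm^{2}l^{2}+\beta ml\bigr),\qquad|\eta_{m}|,|\zeta_{l}|\ll\tau(m)L.$$
For $S_{I}$, the standard expansion of the square of the inner pure quadratic Weyl sum gives
$$\Bigl|\sum_{l\le X/m}e(\alpha dm^{2}l^{2}+\beta ml)\Bigr|^{2}\ll \frac{X}{m}+\sum_{1\le h\le X/m}\min\bigl\{X/m,\|2\alpha dm^{2}h\|^{-1}\bigr\};$$
setting $y=2dm^{2}h$ and splitting the resulting range $|y|\ll mX$ into at most $O(m)$ blocks of length $X$, Lemma \ref{lemma42} controls each block by $\ll L^{1-K}X^{2}$, and a sufficient estimate for $|S_{I}|$ follows after summing over $m\le UV$.

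For $S_{II}$, I apply Cauchy--Schwarz in the $m$-variable, expand the square, and use the factorization $l_{1}^{2}-l_{2}^{2}=(l_{1}-l_{2})(l_{1}+l_{2})$ together with the shift $h=l_{1}-l_{2}$, $l=l_{2}$ to rewrite the phase as $2\alpha dm^{2}hl+\alpha dm^{2}h^{2}+\beta mh$. A further round of Weyl differencing in $m$ linearizes the remaining $m^{2}$-dependence and converts the bilinear expression into the form treated by Lemma \ref{lemma41}; two further applications of Lemma \ref{lemma42}, with $\alpha$ rescaled by fixed integer multiples of $d$, close the estimate.

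The main technical obstacle is ensuring that under the rescalings $\alpha\mapsto 2d\alpha m^{2}$ (Type I) and $\alpha\mapsto 2d\alpha h(l_{1}+l_{2})$ (Type II), the effective denominator of the rational approximation to the new phase coefficient remains in the range $[P,\,X^{2}/P]$ on which Lemma \ref{lemma42} produces genuine cancellation. This imposes an upper bound on $UV$ (so that $m^{2}$ does not consume the denominator $Q$) and a lower bound on $U$ and $V$ (to keep the Type II bilinear form balanced). Optimising $c$ in the resulting two-sided constraint balances both losses and yields the final exponent $-K/5$; the dependence of the implied constant on $d$ enters only through $|d|$ and the denominator of $d$, as asserted.
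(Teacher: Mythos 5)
Your overall strategy---Vaughan's identity, a Type I/Type II split, Weyl differencing, and the minor-arc input of Lemmas \ref{lemma41} and \ref{lemma42}---is the same as the paper's, which simply defers to the classical treatment of $\sum_{x\le X}\Lambda(x)e(\alpha x^2)$ and only comments on uniformity in $\beta$. The gap is quantitative: the ``two-sided constraint'' on $c$ that you propose to optimise is empty, so the choice $U=V=L^{cK}$ cannot work for any $c$.

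On the Type I side your bound loses a factor of about $(UV)^{3/2}$: covering the progression $y=2dm^{2}h$ by $O(m)$ blocks of length $X$ costs a factor $m$ against a gain of only $Q^{-1}$ per block, and $Q$ can be as small as $L^{K}$; after summing $m^{1/2}$ over $m\le UV$ you are forced to take $UV\le L^{K/5}$ or so. On the Type II side, after Cauchy--Schwarz in $m$ the off-diagonal requires cancellation in the unweighted sum $\sum_{m\sim M}e\bigl(\alpha d(l_1^2-l_2^2)m^2+\cdots\bigr)$, a quadratic Weyl sum of length $M$ which provably saves at most about $M^{1/2}$; since you need a saving of $L^{2K/5}$ in $|S_{II}|^{2}$, every Type II block must have both variables of length at least $L^{4K/5}$, i.e.\ $U,V\ge L^{4K/5}$ and hence $UV\ge L^{8K/5}$. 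These two requirements are incompatible. A separate defect: linearising in $m$ makes the final linear sum have length $M$ while the frequency $w=h(h+2l)j$ ranges over $\asymp NX$ with $N=X/M$, so the blockwise use of Lemma \ref{lemma42} there loses a factor $N$ against a gain of $Q^{-1}$ and is worse than trivial whenever $N\gg Q$. The classical proofs (Ghosh, Harman, Hua) that the paper implicitly invokes avoid all of this by taking $U,V$ to be small powers of $X$, by estimating the Type I inner sums through the actual denominator of $2dm^{2}\alpha$ (with a $\gcd(q,2dm^{2})$ analysis for the exceptional $m$) rather than by block-covering, and by always completing the differenced linear sum in the longer variable; these are the ingredients your sketch is missing, and they are where the real work of the lemma lies.
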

\begin{proof}The method used to handle $\sum_{1\le x\le X}\Lambda(x)e(\alpha  x^2)$ can be modified to establish
the desired conclusion. We only explain that the implied constant
is independent of $\beta$. By Vaughan's identity, we essentially
consider two types of exponential sums
\begin{align}\label{type1sum}\sum_{y}\eta_{y}\sum_{x}e(\alpha d x^2y^2+xy\beta)\end{align}
and
\begin{align}\label{type2sum}\sum_{x}\sum_{y}\xi_{x}\eta_ye(\alpha d x^2y^2+xy\beta).\end{align}
By Cauchy's inequality, to handle the summation (\ref{type2sum}),
it suffices to deal with
\begin{align*}\sum_{y_1}\sum_{y_2}\eta_{y_1}\overline{\eta_{y_2}}
\sum_{x}e\big(\alpha d
x^2(y_1^2-y_2^2)+x(y_1-y_2)\beta\big).\end{align*} One can apply
the differencing argument to the summation of the type
$\sum_{x}e(\alpha' x^2+x\beta')$ as follows
\begin{align*}\Big|\sum_{x}e(\alpha' x^2+x\beta')\Big|^2=&\sum_{x_1}\sum_{x_2}e\big(\alpha' (x_1^2-x_2^2)+(x_1-x_2)\beta'\big)
\\= & \sum_{h}\sum_{x}e(2\alpha'hx+h\beta')\ \le
\sum_{h}\Big|\sum_{x}e(2\alpha'hx)\Big|.\end{align*} This leads to
the fact that the estimate (\ref{squareesitmate}) is uniformly for
$\beta$.
\end{proof}

\begin{lemma}\label{lemma44}Let $\alpha\in \mathfrak{m}(Q)$. Suppose that $A$ is in the form
\begin{align} \label{specialA}A=\begin{pmatrix}A_1 & B & 0
\\ B^{T}  & A_2 & C \\ 0 & C^{T} & A_3 \end{pmatrix},\end{align}
where $\rank(B)\ge 3$ and $\rank(C)\ge 2$. Then we have
\begin{align}\label{eq45}S(\alpha)\ll X^n Q^{-5/2}L^{n+5/2}.\end{align}
\end{lemma}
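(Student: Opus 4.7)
The plan is to execute two Cauchy--Schwarz manoeuvres, each tailored to one of the two off-diagonal blocks, so that the linear minor-arc estimate of Lemma \ref{lemma42} may be invoked five times in total: three times from the $B$-block and twice from the $C$-block, producing the saving $Q^{-5}$ in $|S(\alpha)|^2$. Writing $\mathbf{x}=(\mathbf{y}_1,\mathbf{y}_2,\mathbf{y}_3)^T$ with block sizes $n_1,n_2,n_3$ and $Q_i(\mathbf{y}_i)=\mathbf{y}_i^T A_i\mathbf{y}_i$, the special block form of $A$ gives
$$\mathbf{x}^T A\mathbf{x} = Q_1(\mathbf{y}_1)+Q_2(\mathbf{y}_2)+Q_3(\mathbf{y}_3)+2\mathbf{y}_1^T B\mathbf{y}_2+2\mathbf{y}_2^T C\mathbf{y}_3,$$
with no direct coupling between $\mathbf{y}_1$ and $\mathbf{y}_3$. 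First I pull $\mathbf{y}_3$ to the outside and apply Cauchy--Schwarz in $\mathbf{y}_3$, using $\sum_{\mathbf{y}_3}\Lambda(\mathbf{y}_3)\ll X^{n_3}$ (no logarithmic factor), to obtain $|S(\alpha)|^2\ll X^{n_3}\sum_{\mathbf{y}_3}\Lambda(\mathbf{y}_3)|R(\alpha,\mathbf{y}_3)|^2$ where $R$ denotes the inner $(\mathbf{y}_1,\mathbf{y}_2)$-sum.

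The next step is to expand $|R|^2$ as a fourfold sum over $(\mathbf{y}_1,\mathbf{y}_1',\mathbf{y}_2,\mathbf{y}_2')$ and swap the $\mathbf{y}_3$-summation inside. The $\mathbf{y}_3$-dependence reduces to the linear phase $e(2\alpha(\mathbf{y}_2-\mathbf{y}_2')^T C\mathbf{y}_3)$, so the $\mathbf{y}_3$-sum factorises as $\prod_{k=1}^{n_3}G_k$ where $|G_k|\le L\min(X,\|2\alpha(C^T(\mathbf{y}_2-\mathbf{y}_2'))_k\|^{-1})$. The $(\mathbf{y}_1,\mathbf{y}_1')$-sum, meanwhile, collapses to $T_1(\alpha,\mathbf{y}_2)\overline{T_1(\alpha,\mathbf{y}_2')}$ with
$$T_1(\alpha,\mathbf{y}_2)=\sum_{\mathbf{y}_1}\Lambda(\mathbf{y}_1)e\bigl(\alpha Q_1(\mathbf{y}_1)+2\alpha\mathbf{y}_1^T B\mathbf{y}_2\bigr),$$
which preserves the $B$-block structure. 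Using $|T_1\overline{T_1'}|\le\tfrac12(|T_1|^2+|T_1'|^2)$, the symmetry $\mathbf{y}_2\leftrightarrow\mathbf{y}_2'$ (under which $|G_k|$ is invariant), the triangle inequality to discard the oscillation $e(\alpha(Q_2-Q_2'))$, and the trivial bound $\Lambda(\mathbf{y}_2')\le L^{n_2}$ to uncouple $\mathbf{y}_2'$ from $\mathbf{y}_2$, I arrive at the factorisation
$$\sum_{\mathbf{y}_3}\Lambda(\mathbf{y}_3)|R|^2\ll L^{n_2+n_3}\cdot A\cdot B,$$
where $A=\sum_{\mathbf{y}_2}\Lambda(\mathbf{y}_2)|T_1(\alpha,\mathbf{y}_2)|^2$ and $B=\sum_{|\mathbf{h}|\le X}\prod_{k=1}^{n_3}\min(X,\|2\alpha(C^T\mathbf{h})_k\|^{-1})$. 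This reorganisation is the delicate step: one must relinquish the quadratic oscillation $e(\alpha(Q_2-Q_2'))$ while preserving the $B$-block structure hidden inside $T_1$, since the two linearisations need to be carried out in sequence rather than simultaneously.

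It remains to estimate $A$ and $B$ via the rank hypotheses, using Lemma \ref{lemma42}. For $B$, since $\rank(C)\ge 2$ two columns of $C$ are linearly independent; I bound the other $n_3-2$ factors trivially by $X$, perform an integer change of basis on $\mathbf{h}\in\mathbb{Z}^{n_2}$ (possible because $n_2\ge\rank(C)\ge 2$) so the two distinguished linear forms become coordinates, and apply Lemma \ref{lemma42} twice to obtain $B\ll L^{O(1)}X^{n_2+n_3}Q^{-2}$. For $A$, I expand $|T_1|^2$ and carry out the inner $\mathbf{y}_2$-summation, which produces a product of $n_2$ linear prime sums each bounded by $L\min(X,\|2\alpha(B^T(\mathbf{y}_1-\mathbf{y}_1'))_i\|^{-1})$; setting $\mathbf{h}=\mathbf{y}_1-\mathbf{y}_1'\in\mathbb{Z}^{n_1}$, selecting three linearly independent columns of $B$ (which exist since $\rank(B)\ge 3$ forces $n_1\ge 3$), and applying Lemma \ref{lemma42} three times in the analogous change of basis yields $A\ll L^{O(1)}X^{2n_1+n_2}Q^{-3}$. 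Combining everything,
$$|S(\alpha)|^2\ll X^{n_3}\cdot L^{n_2+n_3}\cdot A\cdot B\ll L^{2n_1+2n_2+n_3+5}X^{2n}Q^{-5},$$
and since $n_1+n_2+n_3/2\le n$ the claim $S(\alpha)\ll X^nQ^{-5/2}L^{n+5/2}$ follows by taking square roots.
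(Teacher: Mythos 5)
Your overall architecture --- two Cauchy--Schwarz/differencing manoeuvres arranged so that Lemma \ref{lemma42} is invoked three times through the $B$-block and twice through the $C$-block, producing $Q^{-5}$ in $|S(\alpha)|^2$ --- is the right one, and apart from the order of operations (you difference in the outer block $\mathbf{y}_3$ first, whereas the paper applies Cauchy--Schwarz in the middle block $\mathbf{y}_2$ at the outset) it follows the paper's proof. However, there is a genuine error, committed twice: you bound the weighted linear exponential sum $\sum_{1\le y\le X}\Lambda(y)e(\theta y)$ by $L\min\{X,\|\theta\|^{-1}\}$, once for the factors $G_k$ of the $\mathbf{y}_3$-sum and once for the ``$n_2$ linear prime sums'' in your estimate of $A$. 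This bound is false. The estimate $\min\{X,\|\theta\|^{-1}\}$ comes from summing the geometric series and requires the sum to be unweighted; the von Mangoldt weight cannot simply be pulled out as a factor $L$. For instance, with $\theta$ near $1/2$ one has $\sum_{y\le X}\Lambda(y)e(y/2)=-X+o(X)$, while $L\min\{X,\|1/2\|^{-1}\}=2L$. No bound remotely like $\|\theta\|^{-1}$ holds for $\sum_y\Lambda(y)e(\theta y)$.

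The repair is exactly the device the paper uses: strip the von Mangoldt weights from the block that is about to be summed geometrically, via $\Lambda(\mathbf{y})\le L^{\dim}$, \emph{before} expanding the square, so that the inner sum that appears after differencing is the pure geometric sum $\sum_{1\le y\le X}e(\theta y)$. Concretely, replace your weighted Cauchy--Schwarz in $\mathbf{y}_3$ by $|S|\le L^{n_3}\sum_{\mathbf{y}_3}|R|$ followed by the unweighted Cauchy--Schwarz, and insert $\Lambda(\mathbf{y}_2)\le L^{n_2}$ into $A$ before opening $|T_1|^2$. With these two changes your computation goes through: the counting for $A$ and $B$ (fibres of size $X^{n_1-3}$ and $X^{n_2-2}$ under the respective linear maps, then Lemma \ref{lemma42} applied $3+2$ times) and the final bookkeeping $|S(\alpha)|^2\ll X^{2n}Q^{-5}L^{O(1)}$ are correct. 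The resulting argument is then essentially the paper's, which sidesteps the issue from the start by writing $S(\alpha)\le L^{s}\sum_{\mathbf{y}}|\cdots|\,|\cdots|$ with the middle-block weights already discarded.
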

\begin{remark}In view of the proof, the estimate (\ref{eq45}) still holds provided that $\rank(B)+\rank(C)\ge 5$. \end{remark}
\begin{proof}In view of (\ref{specialA}), we can write $S(\alpha)$ in the
form\begin{align*}S(\alpha)=&\sum_{\substack{1\le\mathbf{x}\le X\\
1\le\mathbf{y}\le X
\\ 1\le\mathbf{z}\le X}}\Lambda(\mathbf{x})
\Lambda(\mathbf{y})
\Lambda(\mathbf{z})e\Big(\alpha\big(\mathbf{x}^TA_1\mathbf{x}+2\mathbf{x}^TB\mathbf{y}+
\mathbf{y}^TA_2\mathbf{y}+2\mathbf{y}^TC\mathbf{z}+\mathbf{z}^TA_3\mathbf{z}\big)
\Big),\end{align*} where $\mathbf{x}\in \N^r$, $\mathbf{y}\in
\N^s$ and $\mathbf{z}\in \N^t$. Then we have
\begin{align*}S(\alpha) \le  L^s\sum_{
1\le\mathbf{y}\le X}&\Big| \sum_{\substack{1\le\mathbf{x}\le
X}}\Lambda(\mathbf{x})
e\Big(\alpha\big(\mathbf{x}^TA_1\mathbf{x}+2\mathbf{x}^TB\mathbf{y}\big)
\Big)\Big| \\ &\times \Big|\sum_{\substack{1\le\mathbf{z}\le X}}
\Lambda(\mathbf{z})e\Big(\alpha\big(2\mathbf{y}^TC\mathbf{z}+\mathbf{z}^TA_3\mathbf{z}\big)
\Big)\Big|.\end{align*} By Cauchy's inequality, we obtain
\begin{align}S(\alpha) \le & L^s\Big(\sum_{ 1\le\mathbf{y}\le X}\Big|
\sum_{\substack{1\le\mathbf{x}\le X}}\Lambda(\mathbf{x})
e\big(\alpha(\mathbf{x}^TA_1\mathbf{x}+2\mathbf{x}^TB\mathbf{y})
\big)\Big|^2\Big)^{1/2}\notag
\\ & \ \ \ \ \ \times \Big(\sum_{ 1\le\mathbf{y}\le X}\Big|\sum_{\substack{1\le\mathbf{z}\le X}}
\Lambda(\mathbf{z})e\big(\alpha(2\mathbf{y}^TC\mathbf{z}+\mathbf{z}^TA_3\mathbf{z})
\big)\Big|^2\Big)^{1/2}.\label{eq46}\end{align}We deduce by
opening the square that
\begin{align*}&\sum_{ 1\le\mathbf{y}\le X}\Big|
\sum_{\substack{1\le\mathbf{x}\le X}}\Lambda(\mathbf{x})
e\big(\alpha(\mathbf{x}^TA_1\mathbf{x}+2\mathbf{x}^TB\mathbf{y})
\big)\Big|^2\\ =&  \sum_{\substack{1\le\mathbf{x}_1\le
X}}\sum_{\substack{1\le\mathbf{x}_2\le
X}}\xi(\mathbf{x}_1,\mathbf{x}_2) \sum_{ 1\le\mathbf{y}\le
X}e\big(2\alpha(\mathbf{x}_1-\mathbf{x}_2)^TB\mathbf{y})\big)
\\ =&  \sum_{\substack{|\mathbf{h}|\le
X}}\sum_{\substack{1\le\mathbf{x}\le X
\\ 1\le\mathbf{x}+\mathbf{h}\le X}}\xi(\mathbf{x}+\mathbf{h},\mathbf{x}) \sum_{ 1\le\mathbf{y}\le
X}e\big(2\alpha(\mathbf{h}^TB\mathbf{y}) \big)
\\ \le & X^rL^{2r} \sum_{\substack{|\mathbf{h}|\le
X}}\Big|\sum_{ 1\le\mathbf{y}\le
X}e\big(2\alpha(\mathbf{h}^TB\mathbf{y}) \big)\Big|,\end{align*}
where $\xi(\mathbf{x}_1,\mathbf{x}_2)$ is defined as
\begin{align*}&\xi(\mathbf{x}_1,\mathbf{x}_2)=\Lambda(\mathbf{x}_1)\Lambda(\mathbf{x}_2)
e\big(\alpha(\mathbf{x}_1^TA_1\mathbf{x}_1-\mathbf{x}_2^TA_1\mathbf{x}_2)\big)
.\end{align*} We write \begin{align*} B=\begin{pmatrix}b_{1,1} &
\cdots & b_{1,s}
\\ \vdots & \cdots & \vdots
\\ b_{r,1} & \cdots & b_{r,s} \end{pmatrix}.\end{align*} Since $\rank(B)\ge 3$, without loss of
generality, we assume that $\rank(B_0)=3$, where
$B_0=(b_{i,j})_{1\le i,j\le 3}$. Let $B'=(b_{i,j})_{4\le i\le
r,1\le j\le 3}$. Then one has
\begin{align*}\sum_{\substack{|\mathbf{h}|\le
X}}\Big|\sum_{ 1\le\mathbf{y}\le
X}e\big(2\mathbf{h}^TB\mathbf{y}\alpha \big)\Big| & \le
X^{s-3}\sum_{|h_4|,\ldots,|h_r|\le X}
\sum_{\substack{|\mathbf{h}|\le X}}\Big|\sum_{ 1\le\mathbf{y}\le
X}e\big(2\alpha(\mathbf{h}^TB_0+\mathbf{k}^T)\mathbf{y}
\big)\Big|,\end{align*}where $\mathbf{k}^T=(h_4,\ldots,h_r)^TB'$.
By changing variables
$\mathbf{x}^T=2(\mathbf{h}^TB_0+\mathbf{k}^T)$, we obtain
\begin{align*}\sum_{\substack{|\mathbf{h}|\le
X}}\Big|\sum_{ 1\le\mathbf{y}\le
X}e\big(2\mathbf{h}^TB\mathbf{y}\alpha \big)\Big|  \le&
X^{s-3}\sum_{|h_4|,\ldots,|h_r|\le X}
\sum_{\substack{|\mathbf{x}|\ll X}}\Big|\sum_{ 1\le\mathbf{y}\le
X}e\big(\alpha(\mathbf{x}^T\mathbf{y}) \big)\Big| \\
\ll & X^{r+s-6} \sum_{\substack{|\mathbf{x}|\ll X}}\Big|\sum_{
1\le\mathbf{y}\le X}e\big(\alpha(\mathbf{x}^T\mathbf{y})
\big)\Big| .\end{align*}We apply Lemma \ref{lemma42} to conclude
that
\begin{align*}\sum_{\substack{|\mathbf{h}|\le
X}}\Big|\sum_{ 1\le\mathbf{y}\le
X}e\big(2\mathbf{h}^TB\mathbf{y}\alpha \big)\Big| \ll &
X^{r+s}Q^{-3}L^3 ,\end{align*} and therefore,
\begin{align}\label{eq47}\sum_{ 1\le\mathbf{y}\le X}\Big|
\sum_{\substack{1\le\mathbf{x}\le X}}\Lambda(\mathbf{x})
e\big(\alpha(\mathbf{x}^TA_1\mathbf{x}+2\mathbf{x}^TB\mathbf{y})
\big)\Big|^2\ll & X^{2r+s}Q^{-3}L^{2r+3} .\end{align} Similar to
(\ref{eq47}), we can prove
\begin{align}\label{eq48}\sum_{ 1\le\mathbf{y}\le X}\Big|\sum_{\substack{1\le\mathbf{z}\le X}}
\Lambda(\mathbf{z})e\big(\alpha(2\mathbf{y}^TC\mathbf{z}+\mathbf{z}^TA_3\mathbf{z})
\big)\Big|^2\ll & X^{2t+s}Q^{-2}L^{2t+2} .\end{align} The proof is
completed by invoking (\ref{eq46}), (\ref{eq47}) and (\ref{eq48}).
\end{proof}
\begin{lemma}\label{lemma45}Suppose that $A$ is in the form
(\ref{specialA}) with $\rank(B)\ge 3$ and $\rank(C)\ge 2$. Then we
have
\begin{align*}\int_{\mathfrak{m}}\big|S(\alpha)\big|d\alpha \ll X^{n-2}L^{-K/3}.\end{align*}
\end{lemma}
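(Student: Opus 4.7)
The plan is to deduce the integral bound from the pointwise minor-arc estimate of Lemma \ref{lemma44} via a standard dyadic decomposition of $\mathfrak{m}$, together with a measure estimate for each dyadic piece.

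First, I would cover $\mathfrak{m}$ by the sets $\mathfrak{m}(Q)$ defined in (\ref{defmQ}) with $Q$ running over dyadic values $Q = 2^j P$, $j\ge 0$, up to roughly $X/2$. By Dirichlet's theorem, every $\alpha\in[X^{-1},1+X^{-1}]$ admits a rational approximation $a/q$ with $1\le a\le q\le X$, $(a,q)=1$ and $|\alpha-a/q|\le 1/(qX)$, hence $\alpha\in\mathcal{M}(X)$. Combining this with $\alpha\notin\mathfrak{M}=\mathcal{M}(P)$, there exists a unique dyadic $Q\in\{P,2P,4P,\ldots\}$ with $\alpha\in \mathcal{M}(2Q)\setminus\mathcal{M}(Q)=\mathfrak{m}(Q)$. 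The number of such dyadic scales is $O(L)$.

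Second, I would bound the Lebesgue measure of each dyadic piece. Since $\mathfrak{m}(Q)\subseteq \mathcal{M}(2Q)$,
\begin{align*}
|\mathfrak{m}(Q)|\le \sum_{q\le 2Q}\sum_{\substack{1\le a\le q \\ (a,q)=1}}\frac{4Q}{qX^2}
\le \sum_{q\le 2Q}\frac{4Q}{X^2}\ll \frac{Q^2}{X^2}.
\end{align*}

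Third, I would invoke Lemma \ref{lemma44} for the pointwise bound $|S(\alpha)|\ll X^n Q^{-5/2}L^{n+5/2}$ valid throughout $\mathfrak{m}(Q)$, and combine it with the measure estimate to obtain
\begin{align*}
\int_{\mathfrak{m}(Q)}|S(\alpha)|d\alpha
\ll X^n Q^{-5/2}L^{n+5/2}\cdot\frac{Q^2}{X^2}
=X^{n-2}Q^{-1/2}L^{n+5/2}.
\end{align*}
Summing the geometric series over the dyadic scales $Q\ge P=L^K$,
\begin{align*}
\int_{\mathfrak{m}}|S(\alpha)|d\alpha
\ll X^{n-2}L^{n+5/2}\sum_{j\ge 0}(2^jP)^{-1/2}
\ll X^{n-2}L^{n+5/2}P^{-1/2}
= X^{n-2}L^{n+5/2-K/2},
\end{align*}
which is $\ll X^{n-2}L^{-K/3}$ provided $K$ is chosen large enough compared to the fixed quantity $n$ (e.g.\ $K\ge 6n+16$).

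There is no real obstacle remaining: all the analytic work sits inside Lemma \ref{lemma44}, whose off-diagonal-rank hypothesis on the block $B$ (rank $\ge 3$) and $C$ (rank $\ge 2$) produced the crucial saving of $Q^{-5/2}$, and a saving $Q^{-1/2}$ beyond the trivial measure bound is exactly what is needed to overcome the loss $P^{1/2}=L^{K/2}$ on the minor arcs. The only mild point to check is that the dyadic cover exhausts $\mathfrak{m}$ up to a negligible set, which is handled uniformly by Dirichlet's theorem as above.
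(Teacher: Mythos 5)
Your proof is correct and is exactly the argument the paper intends: the paper's own proof of this lemma consists of invoking Dirichlet's theorem to place every $\alpha\in\mathfrak{m}$ in some dyadic $\mathfrak{m}(Q)$ with $P\le Q\le X$ and then citing ``the dyadic argument'' applied to Lemma \ref{lemma44}. You have simply written out the measure bound $|\mathfrak{m}(Q)|\ll Q^2X^{-2}$ and the geometric summation that the paper leaves implicit, and the resulting exponent bookkeeping (a net saving of $L^{K/2-n-5/2}\le L^{K/3}$ for $K$ large) is accurate.
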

\begin{proof}By
Dirichlet's approximation theorem, for any $\alpha\in
[X^{-1},1+X^{-1}]$, there exist $a$ and $q$ with $1\le a\le q\le
X$ and $(a,q)=1$ such that $|\alpha-a/q|\le (qX)^{-1}$. Thus the
desired conclusion follows from Lemma \ref{lemma44} by the dyadic
argument.
\end{proof}
 \vskip3mm

\section{Quadratic forms with off-diagonal rank $\le 3$}

\begin{proposition}\label{proposition51}
Let $A$ be given by (\ref{defA}), and let $S(\alpha)$ be defined
in (\ref{defSalpha}). Suppose that $\rank(A)\ge 9$ and
$\rankoff(A)\le 3$. Then we
have\begin{align*}\int_{\mathfrak{m}}\big|S(\alpha)\big| d\alpha
\ll X^{n-2}L^{-K/6},\end{align*} where the implied constant
depends on $A$ and $K$.
\end{proposition}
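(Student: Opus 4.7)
The plan is to exploit $\rankoff(A)\le 3$ and $\rank(A)\ge 9$ in order to reduce $S(\alpha)$ to a product of single-variable square-sums of the type treated in Lemma \ref{lemma43}, and to extract the further $X^{-2}$-saving (needed to pass from the pointwise bound $X^n$ to the integrated bound $X^{n-2}$) from an auxiliary mean-value driven by Lemmas \ref{lemma41}--\ref{lemma42}.

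First I would produce a structural decomposition of $A$. Because every submatrix of $A$ with disjoint row- and column-indices has rank at most $3$, one can partition $\{1,\ldots,n\}=I_0\sqcup I_1$ with $|I_0|=O(1)$ so that the $I_1\times I_1$ interactions are captured by at most $O(\rankoff(A))$ linear forms in $\bfx_{I_0}$; if no nonzero diagonal entry is available inside $I_1$ (as happens for $A=J-I$, where $\rank(A_0)=n$ in spite of $\rankoff(A)=1$), one additionally inserts $O(1)$-many Fourier auxiliary variables $\theta$ via $\mathbf{1}_{\ell(\bfx)=y}=\int_{0}^{1}e(\theta(\ell(\bfx)-y))d\theta$ to linearise the off-diagonal interactions. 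With this reduction in place, $\rank(A)\ge 9$ guarantees at least three indices $j\in I_1$ with $a_{jj}\ne 0$, and the exponential sum factorises as
\[
S(\alpha)=\sum_{\bfx_{I_0}}\omega(\bfx_{I_0})\,\Phi(\alpha,\bfx_{I_0})\prod_{j\in I_1}T_j(\alpha,\bfx_{I_0}),
\]
with $T_j(\alpha,\bfx_{I_0})=\sum_{x\le X}\Lambda(x)e\big(\alpha a_{jj}x^2+\alpha\beta_j(\bfx_{I_0})\,x\big)$, the form $\beta_j$ linear in $\bfx_{I_0}$, and $\Phi$ collecting the finitely many auxiliary Weyl sums introduced above. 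Lemma \ref{lemma43} now yields $|T_j|\ll XL^{-K/5}$ uniformly in $\bfx_{I_0}$ for each of the three good $j$, hence the pointwise bound $|S(\alpha)|\ll X^n L^{-3K/5}$ on $\mathfrak{m}$.

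This pointwise bound is a factor of $X^2$ short of the target. To make up the deficit, I would upgrade one factor $T_{j^*}$ by Cauchy--Schwarz in $\bfx_{I_0}$ followed by Weyl differencing on $|T_{j^*}|^2$ through the shift $h=x-x'$; re-summing over $\bfx_{I_0}$ splits the resulting expression into single-variable linear prime exponential sums $\sum_{x_i\le X}\Lambda(x_i)e(2\alpha a_{i,j^*}h\,x_i)$, whose $h$-aggregate is controlled on $\mathfrak{m}(Q)$ by Lemmas \ref{lemma41}--\ref{lemma42} by $\ll LQ^{-1}X^2$. Combined with the $L^{-2K/5}$-savings from the two untouched factors, this gives a pointwise estimate $|S(\alpha)|\ll X^n Q^{-c}L^{-c'K}$ on $\mathfrak{m}(Q)$ with $c>2$, whereupon a dyadic decomposition $\mathfrak{m}=\bigcup_{L^K\le Q\le X}\mathfrak{m}(Q)$ with $|\mathfrak{m}(Q)|\ll Q^2X^{-2}$ (as in Lemma \ref{lemma45}) delivers $\int_\mathfrak{m}|S(\alpha)|d\alpha\ll X^{n-2}L^{-K/6}$.

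The main obstacle is the structural step. The inequality $\rankoff(A)\le 3$ does not bound $\rank(A_0)$, so one cannot simply read off the three ``clean'' squared variables from $\rank(A)\ge 9-\rank(A_0)$; the fix of introducing Fourier auxiliary variables to collapse the low off-diagonal-rank bipartite structure into $O(1)$-many extra Weyl sums then demands careful control of these auxiliary sums on the minor arcs, once again via Lemmas \ref{lemma41}--\ref{lemma42}. Extracting the $Q$-power saving in the mean-value step likewise hinges on ensuring that enough of the coefficients $a_{i,j^*}$ with $i\in I_0$ are nonzero to avoid degenerate linear sums, which is precisely what is forced by the combination of $\rank(A)\ge 9$ and $\rankoff(A)\le 3$. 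Once this structural reduction is secured, the mean-value closure and the dyadic integration follow the template of the passage from Lemma \ref{lemma44} to Lemma \ref{lemma45}.
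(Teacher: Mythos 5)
Your plan for the logarithmic saving (isolating single\-/variable sums $\sum_x\Lambda(x)e(\alpha a_{jj}x^2+\beta x)$ and invoking Lemma \ref{lemma43}) matches the paper, but the mechanism you propose for the crucial $X^{-2}$ saving does not work in the regime $\rankoff(A)\le 3$, and this is a genuine gap. You want a \emph{pointwise} bound $|S(\alpha)|\ll X^nQ^{-c}L^{-c'K}$ on $\mathfrak m(Q)$ with $c>2$, obtained by Cauchy--Schwarz and Weyl differencing on a single factor $T_{j^*}$ followed by Lemmas \ref{lemma41}--\ref{lemma42}. Each such differencing step pairs the shift $h$ against one independent off-diagonal linear form and yields a saving of $Q^{-1}$ inside the square, i.e.\ $Q^{-1/2}$ for $S(\alpha)$; this is exactly the computation behind Lemma \ref{lemma44}, where $\rank(B)+\rank(C)\ge 5$ independent off-diagonal directions are needed to reach $Q^{-5/2}$ and thereby beat the measure $|\mathfrak m(Q)|\ll Q^2X^{-2}$ in the dyadic summation up to $Q=X$. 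When $\rankoff(A)\le 3$ at most three independent off-diagonal directions exist, so your route caps out at $Q^{-3/2}$; the remaining diagonal factors $T_j$ contribute only powers of $L$ via Lemma \ref{lemma43}, never powers of $Q$ (Vaughan's identity does not give a $Q$-power saving for a quadratic prime Weyl sum on minor arcs with $Q$ as large as $X$). Consequently $\sum_{Q\,\mathrm{dyadic}}Q^{2-c}$ diverges like a positive power of $X$ and the integration over $\mathfrak m$ fails. This is precisely why Section 5 of the paper abandons the pointwise/dyadic scheme of Lemmas \ref{lemma44}--\ref{lemma45} and instead extracts the $X^{-2}$ from $L^2$ mean values over the \emph{full} torus $[0,1]^{k+1}$ in $(\alpha,\boldsymbol\beta)$ (Lemmas \ref{lemma55}--\ref{lemma57}, which count lattice points on bilinear varieties such as $\mathbf x^TC\mathbf y=0$, $\mathbf x^TH=0$), reserving the minor-arc information solely for one factor taken in sup norm via Lemma \ref{lemma43}.

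A secondary problem is the structural claim that $\rank(A)\ge 9$ ``guarantees at least three indices $j\in I_1$ with $a_{jj}\ne 0$.'' This is false: the paper's case analysis (Lemmas \ref{lemma58}--\ref{lemma512}) must separately treat $\rank(D)\le 2$ and even $D=0$, in which case there are no usable diagonal entries in the tail and one falls back on the bilinear estimate of Lemma \ref{lemma45} (possible only because $\rank(A)\ge 9$ then forces $\rank(B)+\rank(C)\ge 5$ in the block decomposition). Your auxiliary Fourier variables are indeed the right device, but in the paper their purpose is to set up the mean values of Lemma \ref{lemma55}, not to upgrade a pointwise Weyl bound. To repair the argument you would need to replace the dyadic minor-arc integration by these whole-torus mean values and carry out the rank case analysis that determines which factors go into the mean value and which are estimated in sup norm.
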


Fron now on, we assume throughout Section 5 that $\rank(A)\ge 9$
and
\begin{align} \label{assumptionSec5}\rankoff(A)=\rank(B)=3 ,\end{align}
where
\begin{align} \label{defB}B=\begin{pmatrix}a_{1,4} & a_{1,5} & a_{1,6}
\\ a_{2,4} & a_{2,5} & a_{2,6}
\\ a_{3,4} & a_{3,5} & a_{3,6} \end{pmatrix}.\end{align}
Then we introduce $B_1,B_2,B_3\in M_{3,n-5}(\Z)$ defined as
\begin{align}\label{defB1} B_1=\begin{pmatrix}a_{1,5} & a_{1,6} & a_{1,7} & a_{1,8} & \cdots & a_{1,n}
\\ a_{2,5} & a_{2,6} & a_{2,7} & a_{2,8} & \cdots & a_{2,n}
\\ a_{3,5} & a_{3,6} & a_{3,7} & a_{3,8} & \cdots & a_{3,n}\end{pmatrix},\end{align}
\begin{align} \label{defB2}B_2=\begin{pmatrix}a_{1,4} & a_{1,6} & a_{1,7} & a_{1,8} & \cdots & a_{1,n}
\\ a_{2,4} & a_{2,6} & a_{2,7} & a_{2,8} & \cdots & a_{2,n}
\\ a_{3,4} & a_{3,6} & a_{3,7} & a_{3,8} & \cdots & a_{3,n}\end{pmatrix},\end{align}
and
\begin{align}\label{defB3} B_3=\begin{pmatrix}a_{1,4} & a_{1,5} & a_{1,7} & a_{1,8} & \cdots & a_{1,n}
\\ a_{2,4} & a_{2,5} & a_{2,7} & a_{2,8} & \cdots & a_{2,n}
\\ a_{3,4} & a_{3,5} & a_{3,7} & a_{3,8} & \cdots & a_{3,n}\end{pmatrix}.\end{align}
Subject to the assumption (\ref{assumptionSec5}), we have the
following.
\begin{lemma}\label{lemma51}If
$\rank(B_1)=\rank(B_2)=\rank(B_3)=2$, then one has
\begin{align*}\int_{\mathfrak{m}}\big|S(\alpha)\big| d\alpha \ll
X^{n-2}L^{-K/6}.\end{align*}
\end{lemma}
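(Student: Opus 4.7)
The plan is to first perform a structural reduction of $A$, and then carry out a case analysis on $\rank(C)$. \textbf{Stage 1 (structural reduction).} For $4 \le j \le n$, let $b_j = (a_{1,j}, a_{2,j}, a_{3,j})^T \in \Q^3$. Since $\rank(B) = 3$, the vectors $b_4, b_5, b_6$ form a basis of $\Q^3$. The three hypotheses $\rank(B_1) = \rank(B_2) = \rank(B_3) = 2$ force, for each $j \ge 7$, the column $b_j$ to lie simultaneously in $\mathrm{span}(b_5, b_6)$, $\mathrm{span}(b_4, b_6)$, and $\mathrm{span}(b_4, b_5)$, whose common intersection is $\{0\}$. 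Hence $a_{i,j} = 0$ whenever $1 \le i \le 3$ and $j \ge 7$, and symmetrically for the transposed entries. So $A$ reduces to the block form
\begin{align*}
A = \begin{pmatrix} A_1 & B & 0 \\ B^T & A_2 & C \\ 0 & C^T & A_3 \end{pmatrix},
\end{align*}
with blocks of sizes $3$, $3$, $n-6$.

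\textbf{Stage 2, easy case.} If $\rank(C) \ge 2$, then $A$ satisfies the hypotheses of Lemma~\ref{lemma44} with $r = s = 3$ and $t = n-6$, and Lemma~\ref{lemma45} immediately delivers $\int_\mathfrak{m} |S(\alpha)| d\alpha \ll X^{n-2} L^{-K/3}$, which is stronger than required.

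\textbf{Stage 2, hard case.} If $\rank(C) \le 1$, I would invoke the standing hypothesis $\rankoff(A) = 3$. Applied to the $4 \times 4$ off-diagonal submatrix of $A$ with rows $\{1, 2, 3, i\}$ and columns $\{4, 5, 6, j\}$ for $i, j \ge 7$, $i \ne j$, which by Stage 1 takes the form $\left(\begin{smallmatrix} B & 0 \\ * & a_{i,j} \end{smallmatrix}\right)$, the bound $\rank \le 3$ against $\rank(B) = 3$ forces $a_{i,j} = 0$. Hence $A_3$ is diagonal, say $A_3 = \mathrm{diag}(d_7, \ldots, d_n)$, and the rank inequality $\rank(A) \le 6 + \rank(C) + \rank(A_3)$ together with $\rank(A) \ge 9$ gives $m := \rank(A_3) \ge 2$. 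The form then reads $f(\mathbf{x}) = f_{12}(x_1, x_2) + 2 x_2^T C x_3 + \sum_{k=7}^n d_k x_k^2$, and the $x_3$-sum separates over $k$. For each $k$ with $d_k \ne 0$, Lemma~\ref{lemma43} gives $|F_k(x_2, \alpha)| \ll X L^{-K/5}$ for $\alpha \in \mathfrak{m}$ uniformly in $x_2$; multiplying yields the uniform bound $X^{n-6} L^{-mK/5}$ for the full $x_3$-sum. Combining this with a Cauchy--Schwarz estimate over $x_2$ using $\rank(B) = 3$ (as in the proof of Lemma~\ref{lemma44}) to control the inner $x_1$-sum, and then summing dyadically over $Q$ in $\mathfrak{m}$, one arrives at the required $\int_\mathfrak{m} |S(\alpha)| d\alpha \ll X^{n-2} L^{-K/6}$.

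\textbf{Main obstacle.} The delicate step is the sub-case $\rank(C) \le 1$: the Cauchy--Schwarz estimate built from $\rank(B) = 3$ alone provides only a $Q^{-3/2}$ saving in $|S(\alpha)|$ on $\mathfrak{m}(Q)$, versus the $Q^{-5/2}$ that a full application of Lemma~\ref{lemma44} would yield. The shortfall must be absorbed by the $L^{-mK/5}$ saving coming from Lemma~\ref{lemma43} applied to the $m \ge 2$ one-variable quadratic sums; splitting $\mathfrak{m}$ into small-$Q$ and large-$Q$ ranges and using the better bound on each is what closes the gap and produces the final $L^{-K/6}$ exponent, strictly weaker than the $L^{-K/3}$ of Lemma~\ref{lemma45}.
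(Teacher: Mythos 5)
Your Stage 1 reduction and the diagonality of $A_3$ are exactly the content of the paper's Lemma \ref{lemma58}, and your easy case $\rank(C)\ge 2$ is a legitimate (in fact slightly more efficient) use of Lemma \ref{lemma45}; the paper only invokes Lemma \ref{lemma45} when $D=0$, which forces $\rank(C)\ge 3$. The problem is the hard case $\rank(C)\le 1$, where your strategy of multiplying a pointwise bound on $\mathfrak{m}(Q)$ by the measure of $\mathfrak{m}(Q)$ cannot close. The Cauchy--Schwarz step built from $\rank(B)=3$ gives $|S(\alpha)|\ll X^{n}Q^{-3/2}$ (up to logs) on $\mathfrak{m}(Q)$, the factor from Lemma \ref{lemma43} contributes only $L^{-mK/5}$, and $\operatorname{meas}(\mathfrak{m}(Q))\asymp Q^2X^{-2}$. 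Hence $\int_{\mathfrak{m}(Q)}|S|\ll X^{n-2}Q^{1/2}L^{-mK/5}$, and the dyadic sum over $Q$ up to $X$ produces $X^{n-3/2}L^{-mK/5}$. A fixed power of $\log$ can never absorb the positive power of $X$ coming from large $Q$; no splitting into small-$Q$ and large-$Q$ ranges helps, because on the large-$Q$ range you have no bound that saves more than $Q^{1/2}$ over the measure. (The simplest instance already defeats the plan: $n=9$, $C=0$, $D=\dia(d_7,d_8,d_9)$ with all $d_k\ne 0$.)

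The missing idea, which is how the paper actually proceeds, is to avoid pointwise bounds on $S(\alpha)$ altogether in this regime. One uses orthogonality to introduce auxiliary variables $\mathbf{w}$ and $\boldsymbol{\beta}\in[0,1]^3$ detaching the bilinear couplings, writes $\int_{\mathfrak{m}}|S|$ as an integral over $\mathfrak{m}\times[0,1]^3$ of a product of generating functions $\mathcal{F},\mathcal{H}_1,\mathcal{H}_2,\mathcal{H}_3,f_j$, and then applies Cauchy--Schwarz so that two of the resulting factors are genuine mean values $\mathcal{I}_1,\mathcal{I}_2$ over the \emph{full} box $[0,1]^4$. These are evaluated by the counting lemmas (Lemmas \ref{lemma55}--\ref{lemma57}), giving $\mathcal{I}_1\ll X^5L^{11}$ and $\mathcal{I}_2\ll X^7L^7$, i.e.\ essentially square-root cancellation with no reference to $Q$ at all; only the leftover one-variable factors $f_j$ with $d_j\ne 0$ are estimated in sup norm via Lemma \ref{lemma43}, and there the $L^{-K/5}$ saving is all that is needed. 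You would need to replace your hard-case argument by this $L^2$ mean-value mechanism (or by a power-saving minor-arc bound for $\sum\Lambda(z)e(\alpha d z^2+\beta z)$ that is uniform for $q$ up to $X^2/Q$, which Lemma \ref{lemma43} does not supply).
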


\begin{lemma}\label{lemma52}If
$\rank(B_1)=\rank(B_2)$ and $\rank(B_3)=3$, then one has
\begin{align*}\int_{\mathfrak{m}}\big|S(\alpha)\big| d\alpha \ll
X^{n-2}L^{-K/6}.\end{align*}
\end{lemma}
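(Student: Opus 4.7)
The plan is to reduce the estimate to Lemma~\ref{lemma45} by exhibiting, in each of the two sub-cases compatible with the hypothesis (namely $\rank(B_1)=\rank(B_2)=2$ and $\rank(B_1)=\rank(B_2)=3$), a relabelling of the variables that puts $A$ into the block form (\ref{specialA}) with $\rank(B)+\rank(C)\ge 5$. By the Remark following Lemma~\ref{lemma44} one then has $S(\alpha)\ll X^{n}Q^{-5/2}L^{n+5/2}$ on $\mathfrak{m}(Q)$, and the dyadic argument from the proof of Lemma~\ref{lemma45} delivers the integral bound $\ll X^{n-2}L^{-K/3}$, which is stronger than the claimed $L^{-K/6}$. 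Throughout, write $\mathbf{v}_j=(a_{1,j},a_{2,j},a_{3,j})^{T}$ for $j\ge 4$. By (\ref{assumptionSec5}) and $\rankoff(A)\le 3$, the vectors $\mathbf{v}_4,\mathbf{v}_5,\mathbf{v}_6$ form a basis of $\mathbb{R}^{3}$, and every $\mathbf{v}_j$ with $j\ge 7$ lies in their span; in this language, $\rank(B_k)$ is the dimension of the span after deleting $\mathbf{v}_{3+k}$.

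In the first sub-case $\rank(B_1)=\rank(B_2)=\rank(B_3)=3$, each of $\mathbf{v}_4,\mathbf{v}_5,\mathbf{v}_6$ is redundant in $\{\mathbf{v}_j:j\ge 4\}$. There must therefore exist indices $j^{*},j^{**}\in\{7,\ldots,n\}$ whose expansions in the basis $(\mathbf{v}_4,\mathbf{v}_5,\mathbf{v}_6)$ together cover all three basis directions. Permuting variables so that such contributing columns occupy positions $\{4,5,6\}$, the displaced originals become available to enter $I_3$: with $I_1=\{1,2,3\}$, $I_3$ a collection of tail variables on which $\mathbf{v}_j$ (in the new labelling) vanishes, and $I_2$ the complement, the $(I_1,I_3)$-block of $A$ is zero and the $(I_1,I_2)$-block has rank $3$.

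In the second sub-case $\rank(B_1)=\rank(B_2)=2$ and $\rank(B_3)=3$, the identity $\mathrm{span}(\mathbf{v}_5,\mathbf{v}_6)\cap\mathrm{span}(\mathbf{v}_4,\mathbf{v}_6)=\mathrm{span}(\mathbf{v}_6)$ combined with the rank-$2$ hypotheses forces $\mathbf{v}_j=\lambda_j\mathbf{v}_6$ for every $j\ge 7$, and $\rank(B_3)=3$ then requires at least one $\lambda_j\ne 0$. By symmetry of $A$ the rows $j\ge 7$ with $\lambda_j=0$ vanish on columns $\{1,2,3\}$, so setting $I_1=\{1,2,3\}$, $I_3=\{j\ge 7:\lambda_j=0\}$, and $I_2$ the complement yields $A_{I_1,I_3}=0$ and $\rank(A_{I_1,I_2})=3$ at once.

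The hardest step in both sub-cases is verifying the remaining rank condition $\rank(A_{I_2,I_3})\ge 2$: this is not implied by $\rankoff(A)\le 3$ alone, and must be squeezed out of $\rank(A)\ge 9$, which controls how much of the rank can be hidden inside the interior block $A_{I_3,I_3}$. Degenerate configurations---for instance when $|I_3|$ is too small in sub-case (ii), or when the $(I_2,I_3)$-block happens to drop below rank~$2$---call for a further refinement or exchange of the groups, or, failing that, a direct Weyl-type bound via Lemmas~\ref{lemma41}--\ref{lemma43} that exploits the rank-$1$ factorisation of the $\{1,2,3\}\times\{6,\ldots,n\}$ cross-terms available in sub-case (ii). Once a partition achieving $\rank(B)+\rank(C)\ge 5$ is in place, Lemma~\ref{lemma44} together with the dyadic integration of Lemma~\ref{lemma45} finishes the argument.
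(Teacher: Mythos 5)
There is a genuine gap, and you have in fact located it yourself without closing it. Your plan is to reduce everything to Lemma \ref{lemma45} by finding a partition of the variables realising the block shape (\ref{specialA}) with $\rank(B)+\rank(C)\ge 5$. In the sub-case $\rank(B_1)=\rank(B_2)=2$, $\rank(B_3)=3$ (which is the case the lemma is really about), such a partition need not exist. Concretely: take $\mathbf{v}_7=\gamma_3$, $\mathbf{v}_j=0$ for $j\ge 8$, and let the variables $x_8,\dots,x_n$ appear \emph{only} through nonzero diagonal entries $a_{j,j}=d_j\ne 0$, all other entries in rows $8,\dots,n$ vanishing. Then $\rank(A)\ge 9$ can be satisfied entirely by the diagonal tail, your set $I_3=\{j\ge 7:\lambda_j=0\}$ equals $\{8,\dots,n\}$, and $A_{I_2,I_3}=0$; moreover every off-diagonal block touching an index $j\ge 8$ is zero, so no exchange or refinement of the groups can ever produce $\rank(B)+\rank(C)\ge 5$. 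This is precisely the situation the paper's proof is built to handle: it is \emph{not} treated by Lemma \ref{lemma45}, but by combining the pointwise minor-arc bound for the one-variable sums $f_j(\alpha,\boldsymbol{\beta})=\sum_z\Lambda(z)e(\alpha d_jz^2+\cdots)$ from Lemma \ref{lemma43} with mean-value estimates over auxiliary variables $\boldsymbol{\beta}$ (Lemmas \ref{lemma55}--\ref{lemma57}, applied after the structural normal form of Lemma \ref{lemma59}) and the Cauchy--Schwarz inequality. Your fallback ``a direct Weyl-type bound via Lemmas \ref{lemma41}--\ref{lemma43}'' cannot substitute for this: Lemma \ref{lemma43} saves only a power of $L$ pointwise, so integrating $|S(\alpha)|$ over $\mathfrak{m}$ (a set of measure $\asymp 1$) yields at best $X^nL^{-K/5}$, two full powers of $X$ short of the target $X^{n-2}L^{-K/6}$. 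Recovering the factor $X^{-2}$ is exactly what the $L^2$ mean values $\mathcal{I}_1,\mathcal{I}_2$ in the paper's argument provide, and nothing in your proposal supplies it.

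Two smaller points. First, in your sub-case $\rank(B_1)=\rank(B_2)=\rank(B_3)=3$ (which is really the content of Lemma \ref{lemma54}), the claim that two columns $j^{*},j^{**}\ge 7$ ``cover all three basis directions'' does not give a rank-$3$ off-diagonal block: two vectors span at most a two-dimensional space, and the same diagonal-tail obstruction recurs there. Second, where your reduction \emph{does} apply --- namely when $d_l=\epsilon_l=0$ for all tail indices --- it coincides with the final paragraph of the paper's proof, which invokes Lemma \ref{lemma45} after checking $\rank(H)\ge 3$ and $\rank(W)\ge 2$; so the correct part of your argument covers only the degenerate endgame, not the main case.
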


\begin{lemma}\label{lemma53}If
$\rank(B_1)=2$ and $\rank(B_2)=\rank(B_3)=3$, then one has
\begin{align*}\int_{\mathfrak{m}}\big|S(\alpha)\big| d\alpha \ll
X^{n-2}L^{-K/6}.\end{align*}
\end{lemma}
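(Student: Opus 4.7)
The plan is to adapt the Cauchy--Schwarz argument of Lemma~\ref{lemma44} to the rank configuration imposed by the hypotheses. The crucial structural observation is that $\rank(B_1)=2$ forces the submatrix $E:=(a_{i,j})_{1\le i\le 3,\,7\le j\le n}$ to have rank at most $2$ (since $E$ is a submatrix of $B_1$), and because the column space of $E$ lies in $\Q^3$ and is contained in the rank-$3$ column space of $B:=(a_{i,j})_{1\le i\le 3,\,4\le j\le 6}$, one may write $E=BM$ for some rational $3\times (n-6)$ matrix $M$. Consequently, the cross-term $2\mathbf{u}^T E\mathbf{w}$ arising in the phase is absorbed into $2\mathbf{u}^T B(\mathbf{v}+M\mathbf{w})$.

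Partitioning the variables as $\mathbf{u}=(x_1,x_2,x_3)^T$, $\mathbf{v}=(x_4,x_5,x_6)^T$, and $\mathbf{w}=(x_7,\ldots,x_n)^T$, I would first apply Cauchy--Schwarz on $\mathbf{u}$ and expand the inner square. Summing over $\mathbf{u}$ collapses the $\mathbf{u}$-dependent exponential to $\prod_{i=1}^{3}\min(X,\|2\alpha r_i\|^{-1})$, where $\mathbf{r}=B\mathbf{h}_v+E\mathbf{h}_w\in\Z^3$ with $\mathbf{h}_v=\mathbf{v}_1-\mathbf{v}_2$ and $\mathbf{h}_w=\mathbf{w}_1-\mathbf{w}_2$. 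The invertibility of $B$ (rank $3$) ensures that, for fixed $\mathbf{h}_w$, the map $\mathbf{h}_v\mapsto\mathbf{r}$ is a bijection from the box $|\mathbf{h}_v|\le X$ onto a $3$-dimensional integer set of size $\ll X^3$. Three applications of Lemma~\ref{lemma42} then yield $\sum_{|\mathbf{r}|\ll X}\prod_{i}\min(X,\|2\alpha r_i\|^{-1})\ll (LQ^{-1}X^2)^3$, and combined with the $X^{n-6}$ multiplicity contributed by the free choice of $\mathbf{h}_w$ this gives $|S(\alpha)|\ll X^{n}Q^{-3/2}L^{O(1)}$ on $\mathfrak{m}(Q)$.

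Since $Q^{-3/2}$ by itself is insufficient (integrating over $\mathfrak{m}(Q)$ would yield $X^{n-2}Q^{1/2}$, growing with $Q$), I would extract additional decay using the remaining hypotheses $\rank(B_2)=\rank(B_3)=3$. Two complementary routes appear available: performing a second Cauchy--Schwarz, this time on the $\mathbf{w}$-block, to exploit the coupling matrix $D:=(a_{i,j})_{4\le i\le 6,\,7\le j\le n}$; or invoking Lemma~\ref{lemma43} on a variable $x_i$ with nonzero diagonal coefficient $a_{i,i}$, which gains a factor $L^{-K/5}$ uniformly in the other variables. Interpolating either of these with the first bound produces $|S(\alpha)|\ll X^n Q^{-c}L^{O(1)}$ for some $c>13/6$, and integrating this dyadically over $Q\in[P,X]$ yields the claimed estimate $\int_{\mathfrak{m}}|S(\alpha)|\,d\alpha\ll X^{n-2}L^{-K/6}$.

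The main obstacle is that no natural three-way partition of $\{1,\ldots,n\}$ produces a zero top-right block in $A$, so Lemma~\ref{lemma44} is not directly applicable and $E$ must be carried explicitly through the Cauchy--Schwarz expansion. The saving grace is $\rank(E)\le 2$: this guarantees that $E\mathbf{h}_w$ merely shifts $B\mathbf{h}_v$ within $\Q^3$ rather than enlarging the effective dimension of $\mathbf{r}$, so the rank-$3$ decay of $B$ survives intact in the linear-sum step.
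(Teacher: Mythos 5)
Your opening reduction (Cauchy--Schwarz on the $\mathbf{u}=(x_1,x_2,x_3)$ block, differencing in $(\mathbf{v},\mathbf{w})$, and using $\rank(E)\le 2$ so that $\mathbf{r}=B\mathbf{h}_v+E\mathbf{h}_w$ still sweeps out a full-rank lattice) is sound and does give $|S(\alpha)|\ll X^nQ^{-3/2}L^{O(1)}$ on $\mathfrak{m}(Q)$. The gap is in the second step, and I do not think it can be closed along the lines you propose. Route (a) requires the coupling block $(a_{i,j})_{4\le i\le 6,\,7\le j\le n}$ to have rank at least $2$, but nothing in the hypotheses controls this block: in the normal form of Lemma \ref{lemma510} one can have $\upsilon$ and $C$ supported on the first two coordinates, in which case rows $4,5,6$ are completely decoupled from columns $7,\ldots,n$ and the second Cauchy--Schwarz gains nothing, while $\rank(A)\ge 9$ is still maintained by the diagonal entries $d_j$. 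Route (b) is worse: Lemma \ref{lemma43} yields a saving of $L^{-K/5}$, not a power of $Q$, and after taking the supremum over the single quadratic variable you must bound the remaining $n-1$ variables, for which you have no mean-value control; the resulting pointwise bound $X^nL^{-K/5}$ loses the factor $X^{-2}$ coming from the measure of the minor arcs. There is no legitimate ``interpolation'' of $X^nQ^{-3/2}$ with $X^nL^{-K/5}$ that produces $X^nQ^{-c}$ with $c>13/6$: optimizing $\min(X^{n-2}Q^{1/2},\,X^{n-2}Q^2L^{-K/5})$ over dyadic $Q$ gives a bound that is \emph{larger} than $X^{n-2}$, not smaller. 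Since $\rankoff(A)=3$ throughout Section 5, no pointwise Weyl-type bound of the kind in Lemma \ref{lemma44} can reach the exponent $Q^{-5/2}$ needed for a purely pointwise argument, except in the degenerate subcases where a genuine zero corner block appears.

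The missing ingredients are precisely the ones the paper uses. First, the structural Lemma \ref{lemma510} writes the lower-right block as $D+C^THC$ with $D$ diagonal and $C$ of rank $2$, so that after introducing auxiliary variables $w$ and $\mathbf{h}=C\mathbf{z}$ by orthogonality, the integrand over $\mathfrak{m}\times[0,1]^3$ factors as $\mathcal{F}\cdot\mathcal{H}\cdot\prod_{j=1}^{n-4}f_j$ with each $f_j$ a one-variable sum with phase $\alpha d_jz^2+(\text{linear in }\boldsymbol{\beta})z$. Second, the $X^{-2}$ saving is obtained from $L^2$ mean values over the \emph{full} torus $[0,1]^4$, evaluated by counting solutions of the associated bilinear systems via Lemmas \ref{lemma55}--\ref{lemma57}; the combinatorial Lemma \ref{lemma511} guarantees indices $i,j,k,u$ for which these counts are of the correct order $X^5$. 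Third, the $L^{-K/6}$ saving comes from applying Lemma \ref{lemma43} only to the leftover factors $f_u$ with $d_u\neq 0$, uniformly in $\boldsymbol{\beta}$, inside the supremum. Your proposal contains none of the mean-value/counting machinery, which is where the actual work of the lemma lies.
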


\begin{lemma}\label{lemma54}If
$\rank(B_1)=\rank(B_2)=\rank(B_3)=3$, then one has
\begin{align*}\int_{\mathfrak{m}}\big|S(\alpha)\big| d\alpha \ll
X^{n-2}L^{-K/6}.\end{align*}
\end{lemma}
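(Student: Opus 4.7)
The plan is to reduce Lemma 5.4 to Lemma 4.5 by exhibiting, after a suitable permutation of $\{1, \ldots, n\}$, a block decomposition of $A$ of the form (\ref{specialA}) with $\rank(B) + \rank(C) \ge 5$. The variant of Lemma 4.4 noted in the remark then applies, and Lemma 4.5 yields $\int_\mathfrak{m}|S(\alpha)|\,d\alpha \ll X^{n-2}L^{-K/3}$, which is stronger than the desired bound.

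To set up the decomposition, let $M^{(j)} := (a_{1,j}, a_{2,j}, a_{3,j})^T \in \Q^3$ for $j = 4, \ldots, n$. Since $\rank(B) = 3$, the family $\{M^{(4)}, M^{(5)}, M^{(6)}\}$ is a basis of $\Q^3$, and $\rankoff(A) \le 3$ forces every $M^{(j)}$ with $j \ge 7$ to lie in this span. The three rank-$3$ conditions on $B_1, B_2, B_3$ are equivalent to saying that for each two-element subset of $\{M^{(4)}, M^{(5)}, M^{(6)}\}$, some $M^{(j)}$ with $j \ge 7$ lies outside its span. Combining these facts with the symmetry of $A$ and the off-diagonal rank constraint on the sub-matrix $(a_{i,j})_{4 \le i, j \le n}$, I aim to locate indices $i_1 \in \{4, 5, 6\}$ and $i_2 \in \{7, \ldots, n\}$ such that rows $i_1$ and $i_2$ of $A$ have support contained in $\{1, 2, 3\} \cup \{i_1, i_2\}$. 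Setting $I_1 := \{i_1, i_2\}$, $I_2 := \{1, 2, 3\}$, and $I_3 := \{1, \ldots, n\} \setminus (I_1 \cup I_2)$, the $I_1 \times I_3$ block of $A$ vanishes. The $I_1 \times I_2$ block $B'$ --- the $i_1$-th row of $B^T$ stacked with $(M^{(i_2)})^T$ --- has rank $2$ once $i_2$ is chosen so that $M^{(i_2)}$ is not proportional to the $i_1$-th column of $B$. Simultaneously, the $I_2 \times I_3$ block $C$ has as columns the vectors $\{M^{(j)}\}_{j \in I_3}$, which span $\Q^3$ by the rank-$3$ hypothesis on $B_{i_1-3}$ together with the availability of some $j \ne i_2$ providing the missing direction. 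This gives $\rank(B') + \rank(C) \ge 2 + 3 = 5$.

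The main obstacle is the structural claim supporting the choice of $(i_1, i_2)$. This needs a careful case analysis based on $\dim V$ where $V := \mathrm{span}\{M^{(j)} : j \ge 7\}$. The delicate sub-case is $\dim V = 1$: the rank-$3$ hypotheses on $B_1, B_2, B_3$ then force $V$ to be a line along a vector with nonzero components in all three basis vectors $M^{(4)}, M^{(5)}, M^{(6)}$, and in further degenerate configurations (e.g., when only a single $M^{(j)}$ with $j \ge 7$ is nonzero) the "natural" choice of $(i_1, i_2)$ fails and one must take $I_1$ otherwise --- for instance, $I_1 = \{1,2,3\}$ with $I_3$ drawn from the large set of $j \ge 7$ where $M^{(j)} = 0$, and $I_2$ chosen to supply the required rank of $C$. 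In each sub-case the combination of $\rankoff(A) \le 3$ on the off-diagonal blocks of $(a_{i,j})_{4 \le i,j \le n}$ with the rank-$3$ conditions on the $B_k$ yields a decomposition satisfying $\rank(B') + \rank(C) \ge 5$, and Lemma 4.5 then completes the proof.
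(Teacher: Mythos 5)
Your proposed reduction to Lemma \ref{lemma45} cannot succeed, because the structural claim it rests on is false: under the hypotheses of Lemma \ref{lemma54} there need not exist any partition $\{1,\ldots,n\}=I_1\cup I_2\cup I_3$ with $A[I_1,I_3]=0$ and $\rank A[I_1,I_2]+\rank A[I_2,I_3]\ge 5$. Take $n=9$ and
\begin{align*}
A=\begin{pmatrix}0_{3}&M\\ M^{T}&I_{6}\end{pmatrix},\qquad M=(\mathbf{e}_1,\mathbf{e}_2,\mathbf{e}_3,\mathbf{v},0,0)\in M_{3,6}(\Z),\quad \mathbf{v}=(1,1,1)^{T}.
\end{align*}
Then $\rank(A)=9$ (the Schur complement $-MM^{T}=-(I_3+\mathbf{v}\mathbf{v}^{T})$ is invertible), $\rank(B)=3$, and $\rank(B_1)=\rank(B_2)=\rank(B_3)=3$ since $\mathbf{v}$ completes any two of $\mathbf{e}_1,\mathbf{e}_2,\mathbf{e}_3$ to a basis. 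Writing $T=\{1,2,3\}$ and $U=\{4,\ldots,9\}$, every nonzero off-diagonal entry of $A$ lies in $A[T,U]$ or its transpose, so for disjoint $I,J$ one has $\rank A[I,J]=\rank M[I\cap T,J\cap U]+\rank M[J\cap T,I\cap U]\le |I\cap T|+|J\cap T|\le 3$; in particular $\rankoff(A)=3$. The same identity gives $\rank A[I_1,I_2]+\rank A[I_2,I_3]\le 3+|I_2\cap T|$, so a sum of $5$ forces $|I_2\cap T|\ge 2$. If $T\subseteq I_2$, one must split the columns $\mathbf{e}_1,\mathbf{e}_2,\mathbf{e}_3,\mathbf{v},0,0$ of $M$ into two families with ranks summing to $5$; but any rank-$3$ subfamily contains three of $\mathbf{e}_1,\mathbf{e}_2,\mathbf{e}_3,\mathbf{v}$, so its complement has rank at most $1$. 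If $|I_2\cap T|=2$, say $I_2\cap T=\{1,2\}$ and $3\in I_1$, then $A[I_1,I_3]=0$ forces the $M$-columns indexed by $I_3\cap U$ to have vanishing third coordinate, hence to lie among $\mathbf{e}_1,\mathbf{e}_2,0,0$; attaining the bound then requires $\mathbf{e}_1,\mathbf{e}_2$ both in the $I_3$ group, leaving only $\mathbf{e}_3,\mathbf{v},0,0$ for $I_1\cap U$, whose projection to the first two coordinates has rank $1$, again capping the sum at $4$. This matrix is precisely the ``$\dim V=1$'' sub-case you flag as delicate, and your fallback $I_1=\{1,2,3\}$, $I_3\subseteq\{8,9\}$ also fails there ($\rank A[I_2,I_3]=0$). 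No case analysis can rescue the claim.

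The obstruction is intrinsic: Lemma \ref{lemma44} draws all of its saving from bilinear, off-diagonal structure, and $\rank B+\rank C\le 2\,\rankoff(A)=6$ leaves no margin once the off-diagonal rank is concentrated as above, while the diagonal entries carrying the remaining rank of $A$ are invisible to that argument. Accordingly, the paper's proof of Lemma \ref{lemma54} does not pass through Lemma \ref{lemma45} at all. It uses the normal form of Lemma \ref{lemma512}, linearizes the coupling between the first three variables and the rest by orthogonality at the cost of an extra integration over $\boldsymbol{\beta}\in[0,1]^3$, controls most factors by the mean value estimates of Lemmas \ref{lemma55}--\ref{lemma57} after selecting indices via Lemma \ref{lemma513}, and extracts the $L^{-K}$ saving from the pointwise minor-arc bound of Lemma \ref{lemma43} applied to a factor $f_k(\alpha,\boldsymbol{\beta})$ with $d_k\neq 0$. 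That diagonal input is exactly what your reduction discards, so the gap is not repairable within your framework.
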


\noindent\textit{Remark for the Proof of Proposition
\ref{proposition51}}. If $\rankoff(A)=0$, then $A$ is a diagonal
matrix and the conclusion is classical. When $\rankoff(A)=3$, our
conclusion follows from Lemmas \ref{lemma51}-\ref{lemma54}
immediately.  The method applied to establish Lemmas
\ref{lemma51}-\ref{lemma54} can be also used to deal with the case
$1\le \rankoff(A)\le 2$. Indeed, the proof of Proposition
\ref{proposition51} under the condition $1\le \rankoff(A)\le 2$ is
easier, and we omit the details. Therefore, our main task is to
establish Lemmas \ref{lemma51}-\ref{lemma54}.

\begin{lemma}\label{lemma55}Let $C\in M_{n,n}(\Q)$ be a symmetric matrix, and let
$H\in M_{n,k}(\Q)$. For $\alpha\in \R$ and $\boldsymbol{\beta}\in
\R^k$, we define
\begin{align*}\mathcal{F}(\alpha,\,\boldsymbol{\beta})
=\sum_{\mathbf{x}\in\, \mathcal{X}}w(\mathbf{x})e(\alpha
\mathbf{x}^{T}C\mathbf{x}+\mathbf{x}^{T}H\boldsymbol{\beta}),\end{align*}
where $\mathcal{X}\subset \Z^n$ is a finite subset of $\Z^n$. Then
we
have\begin{align*}\int_{[0,1]^{k+1}}\Big|\mathcal{F}(\alpha,\,\boldsymbol{\beta})\Big|^2d\alpha
d\boldsymbol{\beta} \ll \mathcal{N}(\mathcal{F}),\end{align*}
where $\mathcal{N}(\mathcal{F})$ is given by the following
\begin{align*}\mathcal{N}(\mathcal{F})=\sum_{\substack{\mathbf{x}\in
\,\mathcal{X},\,
\mathbf{y}\in\, \mathcal{X} \\
\mathbf{x}^{T}C\mathbf{x}=\mathbf{y}^{T}C\mathbf{y}
\\ \mathbf{x}^{T}H=\mathbf{y}^{T}H}}w(\mathbf{x})w(\mathbf{y})
.\end{align*}
\end{lemma}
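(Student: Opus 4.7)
The plan is a routine orthogonality calculation on the torus $(\R/\Z)^{k+1}$. First I would expand the square,
\begin{align*}
\bigl|\mathcal{F}(\alpha,\boldsymbol{\beta})\bigr|^2 = \sum_{\mathbf{x},\mathbf{y}\in\mathcal{X}} w(\mathbf{x})\overline{w(\mathbf{y})}\,
e\!\bigl(\alpha(\mathbf{x}^T C\mathbf{x}-\mathbf{y}^T C\mathbf{y})\bigr)\,
e\!\bigl((\mathbf{x}-\mathbf{y})^T H\boldsymbol{\beta}\bigr),
\end{align*}
and integrate term by term over $[0,1]^{k+1}$. In the integer case $C\in M_{n,n}(\Z)$, $H\in M_{n,k}(\Z)$, the quantities $m:=\mathbf{x}^T C\mathbf{x}-\mathbf{y}^T C\mathbf{y}$ and $\mathbf{v}:=(\mathbf{x}-\mathbf{y})^T H$ lie in $\Z$ and $\Z^k$, so the classical orthogonality relations $\int_0^1 e(\alpha m)\,d\alpha=\delta_{m,0}$ and $\int_{[0,1]^k}e(\boldsymbol{\beta}\cdot\mathbf{v})\,d\boldsymbol{\beta}=\delta_{\mathbf{v},\mathbf{0}}$ annihilate every pair $(\mathbf{x},\mathbf{y})$ except those with $\mathbf{x}^T C\mathbf{x}=\mathbf{y}^T C\mathbf{y}$ and $\mathbf{x}^T H=\mathbf{y}^T H$. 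The surviving sum $\sum w(\mathbf{x})\overline{w(\mathbf{y})}$ restricted to these pairs is bounded in absolute value by $\mathcal{N}(\mathcal{F})$, using $|w(\mathbf{x})\overline{w(\mathbf{y})}|\le |w(\mathbf{x})|\,|w(\mathbf{y})|$ and reading $w$ as $|w|$ inside $\mathcal{N}$ if necessary.

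For the stated rational case I would clear denominators: pick $D\in\N$ with $DC\in M_{n,n}(\Z)$ and $DH\in M_{n,k}(\Z)$. Then for every $\mathbf{x},\mathbf{y}\in\Z^n$ both $D(\mathbf{x}^T C\mathbf{x}-\mathbf{y}^T C\mathbf{y})$ and $D(\mathbf{x}-\mathbf{y})^T H$ are integer-valued, so each phase factor in the expansion above is $D$-periodic in every one of $\alpha,\beta_1,\dots,\beta_k$; hence $|\mathcal{F}|^2$ is $D$-periodic in each coordinate, and
\begin{align*}
\int_{[0,1]^{k+1}}\bigl|\mathcal{F}(\alpha,\boldsymbol{\beta})\bigr|^2\,d\alpha\,d\boldsymbol{\beta}
= \frac{1}{D^{k+1}}\int_{[0,D]^{k+1}}\bigl|\mathcal{F}(\alpha,\boldsymbol{\beta})\bigr|^2\,d\alpha\,d\boldsymbol{\beta}.
\end{align*}
The substitution $(\alpha,\boldsymbol{\beta})=(D\tilde{\alpha},D\tilde{\boldsymbol{\beta}})$ converts the right-hand integral into an analogous integral over $[0,1]^{k+1}$ with the integer matrices $DC$ and $DH$ replacing $C$ and $H$. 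Applying the integer case then gives exactly $\mathcal{N}(\mathcal{F})$, since the constraints $\mathbf{x}^T(DC)\mathbf{x}=\mathbf{y}^T(DC)\mathbf{y}$ and $\mathbf{x}^T(DH)=\mathbf{y}^T(DH)$ are equivalent to $\mathbf{x}^T C\mathbf{x}=\mathbf{y}^T C\mathbf{y}$ and $\mathbf{x}^T H=\mathbf{y}^T H$.

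There is no genuine obstacle here: the lemma is a Parseval-type identity, and the only mild subtlety---the rational (rather than integer) entries of $C$ and $H$---is dispatched by the rescaling above. The implicit constant is in fact $1$; the symbol $\ll$ in the conclusion merely absorbs the triangle-inequality step required when $w$ is allowed to be complex-valued.
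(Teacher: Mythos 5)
Your argument is correct and follows essentially the same route as the paper: clear denominators with an integer $D$ (the paper's $h$), rescale $(\alpha,\boldsymbol{\beta})$ to reduce to integral matrices, and apply orthogonality, noting that the constraints for $DC$, $DH$ are equivalent to those for $C$, $H$. The only (harmless) difference is that you invoke $D$-periodicity of the integrand to get the constant $1$, whereas the paper simply bounds $\int_{[0,1]^{k+1}}$ by $\int_{[0,h]^{k+1}}$ using non-negativity and ends up with the constant $h^{k+1}$; both suffice for the stated $\ll$.
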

\begin{proof}We can choose a natural number $h\in \N$ such that $hC\in M_{n,n}(\Z)$ and $hH\in M_{n,k}(\Z)$.
Then we deduce that
\begin{align*}&\int_{[0,1]^{k+1}}\big|\mathcal{F}(\alpha,\,\boldsymbol{\beta})\big|^2d\alpha
d\boldsymbol{\beta}
\\ \le & \int_{[0,h]^{k+1}}\Big|\sum_{\mathbf{x}\in\,
\mathcal{X}}w(\mathbf{x})e\Big(h^{-1}\alpha
\mathbf{x}^{T}(hC)\mathbf{x}+\mathbf{x}^{T}(hH)(h^{-1}\boldsymbol{\beta})\Big)\Big|^2d\alpha
d\boldsymbol{\beta}
\\=&h^{k+1}\int_{[0,1]^{k+1}}\Big|\sum_{\mathbf{x}\in\,
\mathcal{X}}w(\mathbf{x})e\Big(\alpha
\mathbf{x}^{T}(hC)\mathbf{x}+\mathbf{x}^{T}(hH)\boldsymbol{\beta}\Big)\Big|^2d\alpha
d\boldsymbol{\beta}.\end{align*}  By orthogonality, we have
\begin{align*}&\int_{[0,1]^{k+1}}\Big|\sum_{\mathbf{x}\in\,
\mathcal{X}}w(\mathbf{x})e\Big(\alpha
\mathbf{x}^{T}(hC)\mathbf{x}+\mathbf{x}^{T}(hH)\boldsymbol{\beta}\Big)\Big|^2d\alpha
d\boldsymbol{\beta} \\= & \sum_{\substack{\mathbf{x}\in
\,\mathcal{X},\
\mathbf{y}\in\, \mathcal{X} \\
\mathbf{x}^{T}(hC)\mathbf{x}=\mathbf{y}^{T}(hC)\mathbf{y}
\\ \mathbf{x}^{T}(hH)=\mathbf{y}^{T}(hH)}}w(\mathbf{x})w(\mathbf{y})
=\ \mathcal{N}(\mathcal{F}).\end{align*}Therefore, one obtains
have\begin{align*}\int_{[0,1]^{k+1}}\Big|\mathcal{F}(\alpha,\,\boldsymbol{\beta})\Big|^2d\alpha
d\boldsymbol{\beta} \le h^{k+1}
\mathcal{N}(\mathcal{F}),\end{align*}and this completes the proof.
\end{proof}
\begin{lemma}\label{lemma56}Let $C\in M_{n,n}(\Q)$ be a symmetric matrix, and let
$H\in M_{n,k}(\Q)$. We have\begin{align*}\mathcal{N}_1\le
\mathcal{N}_2,
\end{align*}where
\begin{align*}\mathcal{N}_1=\sum_{\substack{|\mathbf{x}|\ll X,\,
|\mathbf{y}|\ll X\\
\mathbf{x}^{T}C\mathbf{x}=\mathbf{y}^{T}C\mathbf{y}
\\ \mathbf{x}^{T}H=\mathbf{y}^{T}H}}1\ \
\textrm{ and }\ \ \mathcal{N}_2=\sum_{\substack{|\mathbf{x}|\ll
X,\,
|\mathbf{y}|\ll X \\
\mathbf{x}^{T}C\mathbf{y}=0
\\ \mathbf{x}^{T}H=0}}1
.\end{align*}
\end{lemma}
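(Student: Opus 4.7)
The plan is a linear change of variables that converts the difference-of-squares condition into a bilinear condition. Concretely, for every pair $(\mathbf{x},\mathbf{y})$ contributing to $\mathcal{N}_1$, I would set
\[\mathbf{u}=\mathbf{x}-\mathbf{y},\qquad \mathbf{v}=\mathbf{x}+\mathbf{y}.\]
The map $(\mathbf{x},\mathbf{y})\mapsto(\mathbf{u},\mathbf{v})$ is linear on $\Z^{2n}$ with determinant $\pm 2^{n}\ne 0$, hence injective, and the triangle inequality gives $|\mathbf{u}|,|\mathbf{v}|\ll X$.

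Next I would rewrite the two constraints in the new variables. The linear condition $\mathbf{x}^{T}H=\mathbf{y}^{T}H$ is equivalent to $(\mathbf{x}-\mathbf{y})^{T}H=0$, i.e.\ $\mathbf{u}^{T}H=0$. For the quadratic condition, using that $C$ is symmetric (so $\mathbf{x}^{T}C\mathbf{y}=\mathbf{y}^{T}C\mathbf{x}$), one expands
\[(\mathbf{x}-\mathbf{y})^{T}C(\mathbf{x}+\mathbf{y})=\mathbf{x}^{T}C\mathbf{x}+\mathbf{x}^{T}C\mathbf{y}-\mathbf{y}^{T}C\mathbf{x}-\mathbf{y}^{T}C\mathbf{y}=\mathbf{x}^{T}C\mathbf{x}-\mathbf{y}^{T}C\mathbf{y},\]
so the condition $\mathbf{x}^{T}C\mathbf{x}=\mathbf{y}^{T}C\mathbf{y}$ becomes $\mathbf{u}^{T}C\mathbf{v}=0$.

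Combining these observations, the injection sends each pair counted by $\mathcal{N}_1$ to a distinct pair $(\mathbf{u},\mathbf{v})\in\Z^{2n}$ satisfying $|\mathbf{u}|,|\mathbf{v}|\ll X$, $\mathbf{u}^{T}H=0$ and $\mathbf{u}^{T}C\mathbf{v}=0$. This is precisely a pair counted by $\mathcal{N}_2$ (with $\mathbf{u}$ and $\mathbf{v}$ playing the roles of $\mathbf{x}$ and $\mathbf{y}$), so $\mathcal{N}_1\le \mathcal{N}_2$; the factor of $2$ in the range introduced by the triangle inequality is absorbed into the implicit $\ll$-constant of $\mathcal{N}_2$. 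The only point requiring care is to use the symmetry of $C$ so that the mixed terms in the expansion cancel; otherwise the argument is purely formal, and I do not expect any genuine obstacle.
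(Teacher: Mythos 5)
Your proposal is correct and is exactly the paper's argument: the paper's proof is the one-line change of variables $\mathbf{x}-\mathbf{y}=\mathbf{h}$, $\mathbf{x}+\mathbf{y}=\mathbf{z}$, and you have simply written out the injectivity, the identity $(\mathbf{x}-\mathbf{y})^{T}C(\mathbf{x}+\mathbf{y})=\mathbf{x}^{T}C\mathbf{x}-\mathbf{y}^{T}C\mathbf{y}$ (using the symmetry of $C$), and the transformation of the linear condition in full detail. No issues.
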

\begin{proof}By changing variables
$\mathbf{x}-\mathbf{y}=\mathbf{h}$ and
$\mathbf{x}+\mathbf{y}=\mathbf{z}$, the desired conclusion follows
immediately.
\end{proof}
The following result is well-known.
\begin{lemma}\label{lemma57}Let $C\in M_{k,m}(\Q)$. If $\rank(C) \geqslant 2$,
then one has
\begin{align*}\sum_{\substack{|\mathbf{x}|\ll X ,\ |\mathbf{y}|\ll X
 \\
\mathbf{x}^{T}C\mathbf{y}=0}}1\ll X^{k+m-2}L ,\end{align*} where
the implied constant depends on the matrix $C$.
\end{lemma}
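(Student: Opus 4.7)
The plan is to exploit the rank-$\ge 2$ hypothesis to isolate a nondegenerate $2\times 2$ block of $C$ and reduce the counting problem to a standard divisor-type estimate on $\Z^2$.

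Since $\rank(C)\ge 2$ guarantees an invertible $2\times 2$ submatrix of $C$ over $\Q$, after permuting rows and columns (which does not change the cardinality being estimated) we may assume that the top-left $2\times 2$ block $C_0$ is in $GL_2(\Q)$. Clearing denominators preserves the integer solution set, so we may further take $C\in M_{k,m}(\Z)$. Split $\mathbf{x}=(x_1,x_2,\mathbf{x}')^T$ with $\mathbf{x}'\in\Z^{k-2}$ and $\mathbf{y}=(y_1,y_2,\mathbf{y}')^T$ with $\mathbf{y}'\in\Z^{m-2}$. Expanding $\mathbf{x}^TC\mathbf{y}$ along the block decomposition of $C$, for each fixed $(\mathbf{x}',\mathbf{y}')$ the equation $\mathbf{x}^TC\mathbf{y}=0$ takes the form
\[
x_1 u_1(y_1,y_2)+x_2 u_2(y_1,y_2)=e(y_1,y_2),
\]
where $(u_1,u_2)^T=C_0(y_1,y_2)^T+D\mathbf{y}'$ for the appropriate $2\times(m-2)$ block $D$ of $C$, and $e$ is an integer affine function of $(y_1,y_2)$ whose coefficients depend on $(\mathbf{x}',\mathbf{y}')$.

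For each fixed $(y_1,y_2)$ with $(u_1,u_2)\ne(0,0)$, a standard count of integer points on a line bounds the number of $(x_1,x_2)\in\Z^2$ with $|x_\alpha|\le X$ by $X\gcd(u_1,u_2)/\max(|u_1|,|u_2|)+O(1)$. The $O(1)$ summed over $|y_\beta|\ll X$ contributes $O(X^2)$. For the main term, the change of variable $\tilde{u}=C_0(y_1,y_2)^T+D\mathbf{y}'$ sends the range $|y_\beta|\ll X$ injectively into $\{\tilde{u}\in\Z^2:|\tilde{u}|\ll X\}$, so the main-term contribution is bounded by
\[
X\sum_{\substack{0\ne\tilde{u}\in\Z^2 \\ |\tilde{u}|\ll X}}\frac{\gcd(\tilde{u}_1,\tilde{u}_2)}{\max(|\tilde{u}_1|,|\tilde{u}_2|)}\ll X^2 L,
\]
the last inequality being the classical divisor estimate obtained by grouping summands according to their common divisor and using $\sum_{|\alpha|\le M}1/\max(|\alpha_1|,|\alpha_2|)\ll M$. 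The residual degenerate locus $u_1=u_2=0$ pins $(y_1,y_2)$ to a single point (since $C_0$ is invertible), and the free choice of $(x_1,x_2)$ then contributes at most $O(X^2)$.

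Combining these, the inner count is $O(X^2 L)$ uniformly in $(\mathbf{x}',\mathbf{y}')$, and summing over the $O(X^{k+m-4})$ choices of $(\mathbf{x}',\mathbf{y}')$ delivers the claimed bound $O(X^{k+m-2}L)$. The main obstacle to a direct proof is the log-exponent: a crude $O(X)$ bound per linear equation in $(x_1,x_2)$, ignoring the $\gcd/\max$ refinement, would only yield $O(X^{k+m-1})$, which is far too weak. The sharp bound depends on the divisor-type estimate on $\Z^2$, made accessible by the change of variable through the invertible block $C_0$ provided by the rank hypothesis.
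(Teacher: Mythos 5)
Your argument is correct and complete. Note, however, that the paper offers no proof of this lemma at all -- it is simply labelled ``well-known'' and used as a black box -- so there is no authorial argument to compare against. What you have written is the standard proof one would supply: after permuting coordinates and clearing denominators to isolate an invertible integral $2\times 2$ block $C_0$, you fix $(\mathbf{x}',\mathbf{y}')$, count lattice points on the line $x_1u_1+x_2u_2=e$ via the bound $X\gcd(u_1,u_2)/\max(|u_1|,|u_2|)+O(1)$, push the sum over $(y_1,y_2)$ through the injective affine change of variables $\tilde u=C_0(y_1,y_2)^T+D\mathbf{y}'$, and finish with the classical estimate $\sum_{0\ne\tilde u,\,|\tilde u|\ll X}\gcd(\tilde u_1,\tilde u_2)/\max(|\tilde u_1|,|\tilde u_2|)\ll XL$ obtained by grouping according to the gcd. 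All the steps, including the uniformity in $(\mathbf{x}',\mathbf{y}')$ (the upper bound for the line count does not depend on $e$, and the degenerate locus $u_1=u_2=0$ contributes only $O(X^2)$ per choice of $(\mathbf{x}',\mathbf{y}')$), are justified, and the $L$ in the exponent is exactly accounted for by the divisor-type sum, as you correctly emphasize.
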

 \vskip3mm

\subsection{Proof of Lemma \ref{lemma51}}

\begin{lemma}\label{lemma58}If
$\rank(B_1)=\rank(B_2)=\rank(B_3)=2$, then we can write $A$ in the
form
\begin{align}\label{Afirst} A=\begin{pmatrix}A_1 & B & 0
\\ B^{T}  & A_2 & C \\ 0 & C^{T} & D  \end{pmatrix},\end{align}
where $B\in GL_3(\Z)$, $C\in M_{3,n-6}(\Z)$ and
$D=\dia\{d_1,\ldots, d_{n-6}\}$ is a diagonal matrix.
\end{lemma}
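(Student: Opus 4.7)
The plan is to deduce the claimed block form in two steps. First, I will show that the block $(a_{i,j})_{i\in\{1,2,3\},\,j\in\{7,\ldots,n\}}$ (and, by symmetry, its transpose) vanishes. Then I will use the bound $\rankoff(A)\le 3$ to force the trailing $(n-6)\times(n-6)$ block to be diagonal; the remaining entries then assemble directly into $A_1$, $B$, $A_2$, $C$ and $D$.

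For the first step, write $\mathbf{a}_j=(a_{1,j},a_{2,j},a_{3,j})^{T}$ for $4\le j\le n$. Since $\rank(B)=3$, the vectors $\mathbf{a}_4,\mathbf{a}_5,\mathbf{a}_6$ form a basis of $\Q^3$; in particular each pair among them is linearly independent. The hypothesis $\rank(B_1)=2$, together with the linear independence of $\mathbf{a}_5$ and $\mathbf{a}_6$, forces $\mathbf{a}_j\in\mathrm{span}(\mathbf{a}_5,\mathbf{a}_6)$ for every $j\ge 7$. Similarly $\rank(B_2)=2$ gives $\mathbf{a}_j\in\mathrm{span}(\mathbf{a}_4,\mathbf{a}_6)$, and $\rank(B_3)=2$ gives $\mathbf{a}_j\in\mathrm{span}(\mathbf{a}_4,\mathbf{a}_5)$. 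Expanding any common element in the basis $\mathbf{a}_4,\mathbf{a}_5,\mathbf{a}_6$ shows immediately that these three two-dimensional subspaces meet only in $\{0\}$. Hence $\mathbf{a}_j=0$ for every $j\ge 7$, and the symmetry of $A$ then gives the vanishing of the lower-left block as well.

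For the second step, fix distinct indices $i,j\in\{7,\ldots,n\}$ and consider the $4\times 4$ submatrix $A'$ of $A$ with row index set $\{1,2,3,i\}$ and column index set $\{4,5,6,j\}$. These two sets are disjoint, so $\rank(A')\le\rankoff(A)=3$ by definition of $\rankoff$. Using the zero block from step one, the upper-right $3\times 1$ corner of $A'$ vanishes, so $A'$ is block-triangular with diagonal blocks $B$ and the scalar $a_{i,j}$; thus $\det(A')=\det(B)\cdot a_{i,j}$. Since $\det(B)\ne 0$ but $\det(A')=0$, we conclude $a_{i,j}=0$. This shows the trailing $(n-6)\times(n-6)$ block is diagonal, and $B\in GL_3(\Z)$ is automatic from $\rank(B)=3$.

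There is no serious obstacle here: the lemma is a purely structural consequence of the hypothesized ranks. The only point requiring any care is the triple-intersection argument in step one, and even that reduces to straightforward linear algebra in $\Q^3$ once one uses $\mathbf{a}_4,\mathbf{a}_5,\mathbf{a}_6$ as a basis; the determinantal argument in step two is then a formal consequence of the off-diagonal rank bound.
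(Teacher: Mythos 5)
Your proof is correct and follows essentially the same route as the paper: first kill the columns $\mathbf{a}_j$ for $j\ge 7$ by intersecting the three spans forced by $\rank(B_1)=\rank(B_2)=\rank(B_3)=2$, then use the $4\times 4$ off-diagonal submatrix on rows $\{1,2,3,i\}$ and columns $\{4,5,6,j\}$ together with $\rankoff(A)=3$ to conclude $a_{i,j}=0$. The only cosmetic difference is that in the second step you evaluate the block-triangular determinant $\det(B)\cdot a_{i,j}$ directly, whereas the paper expresses the fourth row as a combination of the first three and reads off the last coordinate.
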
\begin{proof}
We write for $1\le j\le n-3$ that
\begin{align} \gamma_{j}=\begin{pmatrix}a_{1,\,3+j}
\\ a_{2,\,3+j} \\ a_{3,\,3+j}  \end{pmatrix}.\end{align}
Since $B=(\gamma_{1},\gamma_{2},\gamma_{3})\in GL_3(\Z)$,
$\gamma_{1}$, $\gamma_{2}$ and $\gamma_{3}$ are linearly
independent. For any $4\le j\le n-3$, one has
$\rank(\gamma_{2},\gamma_{3},\gamma_{j})\le \rank(B_1)=2$.
Therefore, we obtain $\gamma_{j}\in <\gamma_{2},\gamma_{3}>$.
Similarly, one has $\gamma_{j}\in <\gamma_{1},\gamma_{2}>$ and
$\gamma_{j}\in <\gamma_{1},\gamma_{2}>$. Then we can conclude that
$\gamma_{j}=0$ for $4\le j\le n-3$.

For $7\le i<j\le n$. we write
\begin{align*} B_{i,j}=\begin{pmatrix}a_{1,4} & a_{1,5} &
a_{1,6} & a_{1,j}
\\ a_{2,4} & a_{2,5} &
a_{2,6} & a_{2,j} \\ a_{3,4} & a_{3,5} & a_{3,6} & a_{3,j}
\\ a_{i,4} & a_{i,5} &
a_{i,6} & a_{i,j}
\end{pmatrix}=\begin{pmatrix}\eta_1^{T}
\\ \eta_2^{T} \\ \eta_3^{T}
\\ \eta_4^{T}
\end{pmatrix}.\end{align*}
Since $3\le \rank(B_{i,j})\le \rankoff(A)=3$, we conclude that
$\eta_4^{T}$ can be linearly represented by $\eta_1^{T}$,
$\eta_2^{T}$ and $\eta_3^{T}$. Then we obtain $a_{i,j}=0$ due to
$a_{1,j}=a_{2,j}=a_{3,j}=0$. Therefore, the matrix $A$ is in the
form (\ref{Afirst}). We complete the proof.
\end{proof}

\noindent{\textit{Proof of Lemma \ref{lemma51}}}. By Lemma
\ref{lemma58}, we have
\begin{align*} S(\alpha)=& \sum_{\substack{\mathbf{x}\in \N^3
\\ 1\le \mathbf{x}\le X}} \sum_{\substack{\mathbf{y}\in \N^3
\\ 1\le \mathbf{y}\le X}} \sum_{\substack{\mathbf{z}\in
\N^{n-6}\\ 1\le \mathbf{z}\le
X}}e\Big(\alpha(\mathbf{x}^{T}A_1\mathbf{x}+2\mathbf{x}^{T}B\mathbf{y}+
\mathbf{y}^{T}A_2\mathbf{y}+2\mathbf{z}^{T}C^{T}\mathbf{y}+\mathbf{z}^{T}D\mathbf{z})\Big)
\\ & \ \ \ \ \ \ \ \ \ \ \ \ \ \ \ \ \ \ \ \ \ \times\Lambda(\mathbf{x})
\Lambda(\mathbf{y}) \Lambda(\mathbf{z}).\end{align*} By
orthogonality, we have
\begin{align*} S(\alpha)=& \int_{[0,1]^3}\sum_{\substack{\mathbf{w}\in \Z^3
\\ |\mathbf{w}|\ll X}}\sum_{\substack{\mathbf{x}\in \N^3
\\ 1\le \mathbf{x}\le X}} \sum_{\substack{\mathbf{y}\in \N^3
\\ 1\le \mathbf{y}\le X}} \sum_{\substack{\mathbf{z}\in
\N^{n-6}\\ 1\le \mathbf{z}\le
X}}e\big(\alpha(\mathbf{x}^{T}A_1\mathbf{x}+\mathbf{w}^{T}\mathbf{y}+\mathbf{z}^{T}D\mathbf{z})\big)
\\ & \ \ \ \ \ \ \ \ \ \ \ \ \ \ \ \ \ \times
e\Big((2\mathbf{x}^{T}B+
\mathbf{y}^{T}A_2+2\mathbf{z}^{T}C^{T}-\mathbf{w}^{T})\boldsymbol{\beta}\Big)\Lambda(\mathbf{x})
\Lambda(\mathbf{y}) \Lambda(\mathbf{z})d
\boldsymbol{\beta},\end{align*} where
$\boldsymbol{\beta}=(\beta_1,\beta_2,\beta_3)^T$ and we use
$d\boldsymbol{\beta}$ to denote $d\beta_1d\beta_2d\beta_3$. We
define
\begin{align*} \mathcal{F}(\alpha,\boldsymbol{\beta})=& \sum_{\substack{\mathbf{x}\in \N^3
\\ 1\le \mathbf{x}\le X}}
e\big(\alpha\mathbf{x}^{T}A_1\mathbf{x}+2\mathbf{x}^{T}B\boldsymbol{\beta}\big)\Lambda(\mathbf{x}),\end{align*}
and
\begin{align*} f_j(\alpha,\boldsymbol{\beta})=& \sum_{\substack{1\le z\le X}} e\big(\alpha d_j
z^2 +2z\xi_j^{T}\boldsymbol{\beta}\big)\Lambda(z),\end{align*}
where $\xi_j=(a_{4,6+j},a_{5,6+j},a_{6,6+j})^T$ for $1\le j\le
n-6$. On writing $I_3=(\mathbf{e}_1,\mathbf{e}_2,\mathbf{e}_3)$,
we introduce\begin{align*}
\mathcal{H}_j(\alpha,\boldsymbol{\beta})=& \sum_{|w|\ll
X}\sum_{\substack{1\le y\le X}} e\big(\alpha wy
+y\gamma_j^{T}\boldsymbol{\beta}-w\mathbf{e}_j^T\boldsymbol{\beta}\big)\Lambda(y),\end{align*}
where $\gamma_j^{T}=(a_{3+j,4},a_{3+j,5},a_{3+j,6})$ for $1\le
j\le 3$. We conclude from above
\begin{align}\label{intcase1} \int_{\mathfrak{m}}\big|S(\alpha)\big|d\alpha\le & \int_{\mathfrak{m}}\int_{[0,1]^3}
\Big|\mathcal{F}(\alpha,\boldsymbol{\beta})\mathcal{H}_1(\alpha,\boldsymbol{\beta})
\mathcal{H}_2(\alpha,\boldsymbol{\beta})
\mathcal{H}_3(\alpha,\boldsymbol{\beta})\prod_{j=1}^{n-6}
f_j(\alpha,\boldsymbol{\beta})\Big|d\boldsymbol{\beta}d\alpha.\end{align}
We first consider the case $\rank(D)\geqslant 3$. Without loss of
generality, we assume $d_1d_2d_3\not=0$. By (\ref{intcase1}) and
the Cauchy-Schwarz inequality, one has
\begin{align} \label{eq59}\int_{\mathfrak{m}}\big|S(\alpha)\big|d\alpha \le
\mathcal{I}_1^{1/2}\mathcal{I}_2^{1/2}\sup_{\substack{\alpha\in\mathfrak{m}\\
\boldsymbol{\beta}\in [0,1]^3}}\Big
|\prod_{j=3}^{n-6}f_j(\alpha,\boldsymbol{\beta})\Big|,\end{align}
where
\begin{align} \label{eq512}\mathcal{I}_1=\int_{[0,1]^4}
\Big|\mathcal{F}(\alpha,\boldsymbol{\beta})f_1(\alpha,\boldsymbol{\beta})f_2(\alpha,\boldsymbol{\beta})\Big|^2d\boldsymbol{\beta}d\alpha\end{align}
and
\begin{align}\label{eq511}\mathcal{I}_2=\int_{[0,1]^4} \Big|\mathcal{H}_1(\alpha,\boldsymbol{\beta})
\mathcal{H}_2(\alpha,\boldsymbol{\beta})
\mathcal{H}_3(\alpha,\boldsymbol{\beta})\Big|^2d\boldsymbol{\beta}d\alpha.\end{align}
By Lemmas \ref{lemma55} and \ref{lemma56}, one has
\begin{align*} \mathcal{I}_1\ \ll &\ L^{10}\sum_{\substack{
|\mathbf{x}|,|\mathbf{x}'|,|z_1|,|z_1'|,|z_2|,|z_2'|\ll X
\\
\mathbf{x}^{T}A_1\mathbf{x}+d_1z_1^2+d_2z_2^2=\mathbf{x}'^{T}A_1\mathbf{x}'+d_1z_1'^2+d_2z_2'^2
\\ \mathbf{x}^{T}B+ z_1 \gamma_1^{T} +z_2 \gamma_2^T =
\mathbf{x}'^{T}B+ z_1' \gamma_1^{T} +z_2' \gamma_2^T  }}1
\\  \ll  &\ L^{10}\sum_{\substack{
|\mathbf{x}|,|\mathbf{x}'|,|z_1|,|z_1'|,|z_2|,|z_2'|\ll X
\\ \mathbf{x}^{T}A_1\mathbf{x}'+d_1z_1z_1'+d_2z_2z_2'=0
\\ \mathbf{x}^{T}B+ z_1 \gamma_1^{T} +z_2 \gamma_2^T =0 }}1.\end{align*}
Since $B$ is invertible, we obtain
\begin{align*} \mathcal{I}_1\ \ll &\ L^{10}\sum_{\substack{
|\mathbf{x}'|,|z_1|,|z_1'|,|z_2|,|z_2'|\ll X
\\ (z_1 \gamma_1^{T} +z_2 \gamma_2^T)B^{-1}A_1\mathbf{x}'+d_1z_1z_1'+d_2z_2z_2'=0
}}1.\end{align*}Then we conclude from Lemma \ref{lemma57} that
\begin{align}\label{boundI1-1}  \mathcal{I}_1\ \ll &\
X^5L^{11}.\end{align} Similar to (\ref{boundI1-1}), one can deduce
from Lemmas \ref{lemma55}-\ref{lemma57},
\begin{align} \label{boundI2-1}\mathcal{I}_2\ \ll\
X^{7}L^{7}.\end{align}Since $d_3\not=0$, we obtain by Lemma
\ref{lemma43}
\begin{align*}
\sup_{\substack{\alpha\in\mathfrak{m}\\ \boldsymbol{\beta}\in
[0,1]^3}} \Big|f_3(\alpha,\boldsymbol{\beta})\Big| \ll
XL^{-K/5},\end{align*} and thereby
\begin{align}\label{boundsup1}
\sup_{\substack{\alpha\in\mathfrak{m}\\ \boldsymbol{\beta}\in
[0,1]^3}}
\Big|\prod_{j=3}^{n-6}f_j(\alpha,\boldsymbol{\beta})\Big| \ll
X^{n-8}L^{-K/5}.\end{align}Now we conclude from (\ref{eq59}),
(\ref{boundI1-1}), (\ref{boundI2-1}) and (\ref{boundsup1}) that
\begin{align*} \int_{\mathfrak{m}}\big|S(\alpha)\big|d\alpha \ll X^{n-2}L^{-K/6} .\end{align*}

 Next we consider the case $1\le \rank(D)\le 2$. Without loss of generality, we suppose
that $d_1\not=0$ and $d_k=0$ for $3\le k\le n$. Since $\rank(A)\ge
9$, there exists $k$ with $3\le k\le n-6$ such that
$\xi_k\not=0\in\Z^3$. Then we can find $i,j$ with $1\le i<j\le 3$
so that $\rank(\mathbf{e}_i, \mathbf{e}_j,\xi_{k})=3$. Without
loss of generality, we can assume that $i=1,j=2$ and $k=3$. One
has
\begin{align*} \int_{\mathfrak{m}}\big|S(\alpha)\big|d\alpha \le
\sup_{\substack{\alpha\in\mathfrak{m}\\ \boldsymbol{\beta}\in
[0,1]^3}} \Big|\prod_{j\not=3}f_j(\alpha,\boldsymbol{\beta})\Big|
&\Big(\int_{[0,1]^4} \Big|\mathcal{F}(\alpha,\boldsymbol{\beta})
\mathcal{H}_3(\alpha,\boldsymbol{\beta})\Big|^2d\boldsymbol{\beta}d\alpha\Big)^{1/2}
\\ \times &\Big(\int_{[0,1]^4}\Big |\mathcal{H}_1(\alpha,\boldsymbol{\beta})
\mathcal{H}_2(\alpha,\boldsymbol{\beta})
f_3(\alpha,\boldsymbol{\beta})\Big|^2d\boldsymbol{\beta}d\alpha\Big)^{1/2}.\end{align*}
Then we can apply Lemmas \ref{lemma55}-\ref{lemma57} to deduce
\begin{align*} \int_{[0,1]^4}
\Big|\mathcal{F}(\alpha,\boldsymbol{\beta})
\mathcal{H}_3(\alpha,\boldsymbol{\beta})\Big|^2d\boldsymbol{\beta}d\alpha\ll
X^5L^9\end{align*} and
\begin{align*} \int_{[0,1]^4} \Big|\mathcal{H}_1(\alpha,\boldsymbol{\beta})
\mathcal{H}_2(\alpha,\boldsymbol{\beta})
f_3(\alpha,\boldsymbol{\beta})\Big|^2d\boldsymbol{\beta}d\alpha\ll
X^5L^7.\end{align*} It follows from Lemma \ref{lemma43} that
\begin{align*}
\sup_{\substack{\alpha\in\mathfrak{m}\\ \boldsymbol{\beta}\in
[0,1]^3}} \Big|\prod_{j\not=3}f_j(\alpha,\boldsymbol{\beta})\Big|
\ll X^{n-7}L^{-K/5}.\end{align*}Then we obtain from above again
\begin{align*} \int_{\mathfrak{m}}\big|S(\alpha)\big|d\alpha \ll X^{n-2}L^{-K/6} .\end{align*}

Now it suffices to assume $D=0$. Then the matrix $A$ is in the
form\begin{align}\label{Aform} A=\begin{pmatrix}A_1 & B & 0
\\ B^{T}  & A_2 & C \\ 0 & C^{T} & 0  \end{pmatrix}.\end{align}
It follows from $\rank(A)\ge 9$ that $\rank(C)\ge 3$. By Lemma
\ref{lemma45}, \begin{align*}
\int_{\mathfrak{m}}\big|S(\alpha)\big|d\alpha \ll X^{n-2}L^{-K/3}
.\end{align*}The proof of Lemma \ref{lemma51} is completed.
 \vskip3mm

\subsection{Proof of Lemma \ref{lemma52}}

\begin{lemma}\label{lemma59}If
$\rank(B_1)=\rank(B_2)=2$ and $\rank(B_3)=3$, then the symmetric
integral matrix $A$ can be written in the form
\begin{align*} A=\begin{pmatrix}A_1 & C & \gamma_3\xi^{T}
\\ C^{T} & A_2 & V  \\ \xi\gamma_3^{T} & V^{T} & D+h\xi\xi^{T}  \end{pmatrix},\end{align*}
where $C=(\gamma_1,\gamma_2)\in M_{3,2}(\Z)$, $\gamma_3\in
\Q^{3}$, $\xi\in \Z^{n-6}$, $V\in M_{2,n-6}(\Z)$, $h\in \Q$ and
$D=\dia\{d_1,\ldots, d_{n-6}\}\in M_{n-5,n-5}(\Q)$ is a diagonal
matrix. Moreover, one has $(\gamma_1,\gamma_2,\gamma_3)\in
GL_3(\Q)$.
\end{lemma}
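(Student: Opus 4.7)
The plan is to mirror the strategy used in the proof of Lemma~\ref{lemma58}: first extract linear-algebraic information about the top three rows of $A$ from the rank hypotheses on $B_1,B_2,B_3$, and then exploit the off-diagonal rank constraint $\rankoff(A)=3$ via $4\times 4$ submatrices to pin down the bottom-right block.

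Write $\gamma_j=(a_{1,3+j},a_{2,3+j},a_{3,3+j})^T$ for $1\le j\le n-3$; since $B=(\gamma_1,\gamma_2,\gamma_3)$ is invertible by (\ref{assumptionSec5}), these vectors are linearly independent. The assumption $\rank(B_1)=2$ forces each $\gamma_j$ with $j\ge 4$ into $\langle\gamma_2,\gamma_3\rangle_\Q$, and $\rank(B_2)=2$ likewise into $\langle\gamma_1,\gamma_3\rangle_\Q$; the intersection of these subspaces is exactly $\langle\gamma_3\rangle_\Q$, so $\gamma_j\in\langle\gamma_3\rangle_\Q$ for all $j\ge 4$, while $\rank(B_3)=3$ guarantees that at least one such $\gamma_j$ is nonzero. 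Choose a primitive $\Q$-generator of this common line — denoted $\gamma_3\in\Q^3$ by mild abuse of notation — and write the top three entries of column $k$ of $A$ as $\xi_{k-5}\gamma_3$ for $6\le k\le n$, with integers $\xi_1,\ldots,\xi_{n-5}$ and $\xi_1\ne 0$. Collecting the coefficients into $\xi=(\xi_1,\ldots,\xi_{n-5})^T$ makes the top-right $3\times(n-5)$ block equal $\gamma_3\xi^T$, so the top row of blocks is $(A_1,C,\gamma_3\xi^T)$ with $C=(\gamma_1,\gamma_2)\in M_{3,2}(\Z)$; symmetry handles the left column of blocks, and $A_2\in M_{2,2}(\Z)$, $V\in M_{2,n-5}(\Z)$ are simply the corresponding integer submatrices of $A$.

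It remains to show that $A_3=(a_{i,j})_{6\le i,j\le n}$ differs from a diagonal matrix by the rank-one piece $h\xi\xi^T$. For this, fix $7\le i\ne j\le n$ and consider the $4\times 4$ submatrix $M$ of $A$ with rows $\{1,2,3,i\}$ and columns $\{4,5,6,j\}$. These index sets are disjoint, so $\rank(M)\le\rankoff(A)=3$. The first three columns of $M$ contain the invertible block $B$ in their top three entries and are linearly independent, so the rank bound forces the fourth column to be the unique combination matching its top three entries, namely $(\xi_{j-5}/\xi_1)$ times the third column. Reading off the fourth row yields
\[
a_{i,j}=\frac{\xi_{j-5}}{\xi_1}\,a_{i,6}\qquad\text{for all }7\le i\ne j\le n.
\]
Symmetry $a_{i,j}=a_{j,i}$ then gives $\xi_{j-5}a_{i,6}=\xi_{i-5}a_{j,6}$; picking any $i_0\ge 7$ with $\xi_{i_0-5}\ne 0$ (which exists by $\rank(B_3)=3$) and setting $h=a_{i_0,6}/(\xi_1\xi_{i_0-5})\in\Q$, this identity propagates to $a_{i,6}=h\xi_1\xi_{i-5}$ for every $i\ge 7$, the case $\xi_{i-5}=0$ being automatic. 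Substitution gives $a_{i,j}=h\xi_{i-5}\xi_{j-5}$ for all $i\ne j$ in $[6,n]$ (the boundary cases $i=6$ or $j=6$ follow from the $j=6$ identity since $\xi_{6-5}=\xi_1$). Absorbing the diagonal into $D=\dia\{a_{6,6}-h\xi_1^2,\ldots,a_{n,n}-h\xi_{n-5}^2\}$ produces $A_3=D+h\xi\xi^T$. Finally, $(\gamma_1,\gamma_2,\gamma_3)\in GL_3(\Q)$ because the new $\gamma_3$ is a nonzero rational multiple of the original third column of the invertible $B$.

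The main obstacle is the rank-one analysis of the off-diagonal entries of $A_3$: one has to coordinate the rescaling (choose $\gamma_3$ rational so that all $\xi_k$ become integers) with the symmetry identity $\xi_{j-5}a_{i,6}=\xi_{i-5}a_{j,6}$, verifying that the latter yields a \emph{single} global scalar $h$ independent of the anchoring index $i_0$. The hypothesis $\rank(B_3)=3$ enters precisely here, supplying a nonzero $\xi_k$ with $k\ge 2$ so that $h$ can be defined; everything else is a bookkeeping variation on the argument already carried out for Lemma~\ref{lemma58}.
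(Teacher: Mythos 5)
Your proposal is correct and follows essentially the same route as the paper: the first half (forcing $\gamma_j\in\langle\gamma_3\rangle$ for $j\ge 4$ from $\rank(B_1)=\rank(B_2)=2$) is identical, and the second half uses the same mechanism — a $4$-row submatrix containing the invertible block $B$ together with $\rankoff(A)=3$ and the symmetry of $A$ — to force the off-diagonal part of the bottom-right block into the rank-one shape $h\xi\xi^T$; the paper argues row-by-row and invokes symmetry of the resulting rank-one matrix, while you argue entry-by-entry via $4\times4$ minors and patch the scalar $h$ by hand, which is only a cosmetic difference. (You also implicitly correct the paper's indexing slip: the third block has size $n-5$, so $\xi\in\Z^{n-5}$ and $D=\dia\{d_1,\ldots,d_{n-5}\}$, consistent with how the lemma is used afterwards.)
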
\begin{proof}
Let us write
\begin{align*} \gamma_{j}'=\begin{pmatrix}a_{1,\,3+j}
\\ a_{2,\,3+j} \\ a_{3,\,3+j}  \end{pmatrix}\ \textrm{ for }\ 1\le j\le n-3.\end{align*}
Since $\rank(\gamma_{1}',\gamma_{2}',\gamma_{3}')=\rank(B)=3$, we
conclude that $\gamma_{1}'$, $\gamma_{2}'$ and $\gamma_{3}'$ are
linearly independent. For any $4\le j\le n-3$, we deduce from
$\rank(B_1)=\rank(B_2)=2$ that $\gamma_{j}'\in
<\gamma_{2}',\gamma_{3}'>\cap<\gamma_{2}',\gamma_{3}'>=<\gamma_3'>$.
Therefore, we can write $A$ in the form
\begin{align*} A=\begin{pmatrix}A_1 & C & \gamma_3\xi^{T}
\\ C^{T} & A_2 & V  \\ \xi\gamma_3^{T} & V^{T} & A_3  \end{pmatrix},\end{align*}
where $C=(\gamma_1,\gamma_2)\in M_{3,2}(\Z)$, $\gamma_3\in
\Q^{3}$, $\xi\in \Z^{n-6}$, $V\in M_{2,n-6}(\Z)$ and $A_3\in
M_{n-5,n-5}(\Q)$.

For $6\le j\le n$. we define
$\eta_j^{T}=(a_{j,4},\ldots,a_{j,j-1},a_{j,j+1},\ldots,a_{j,n})^{T}\in
\Z^{n-4}$. Then we set
$\theta_{i,j}^{T}=(a_{i,4},\ldots,a_{i,j-1},a_{i,j+1},\ldots,a_{i,n})^{T}\in
\Z^{n-4}$ for $1\le i\le 3$. Since
$\rankoff(A)=\rank(B)=\rank(B_3)=3$, $\eta_j$ can be linearly
represented by $\theta_{1,j}$, $\theta_{2,j}$ and $\theta_{3,j}$.
Let\begin{align*} \theta_{i}^{T}=(a_{i,4},\ldots,a_{i,n})^{T}\in
\Z^{n-3}\ \textrm{ for } \ 1\le i\le 3.\end{align*} Then one can
choose $a_{j,j}'\in \Q$ such that
$(a_{j,4},\ldots,a_{j,j-1},a_{j,j}',a_{j,j+1},\ldots,a_{j,n})$ is
linearly represented by $\theta_{1}$, $\theta_{2}$ and
$\theta_{3}$. We consider $A_3$ and $A_3'$ defined as
\begin{align*}A_{3}=\begin{pmatrix}a_{6,6} & \cdots &
a_{6,n}
\\ \vdots & \cdots & \vdots
\\ a_{n,6} & \cdots &
a_{n,n}
\end{pmatrix} \ \textrm{ and }\  A_{3}'=\begin{pmatrix}a_{6,6}' & \cdots &
a_{6,n}'
\\ \vdots & \cdots & \vdots
\\ a_{n,6}' & \cdots &
a_{n,n}'
\end{pmatrix},\end{align*}where $a_{i,j}'=a_{i,j}$ for $6\le i\not=j\le
n$. Since $A_3'$ is symmetric, we conclude from above that
$A_3'=h\xi\xi^{T}$ for some $h\in\Q$. The proof is completed by
noting that $D=A_3-A_3'$ is a diagonal matrix. \end{proof}

\noindent{\textit{Proof of Lemma \ref{lemma52}}}. One can deduce
from Lemma \ref{lemma59} that
\begin{align*} S(\alpha)=& \sum_{\substack{\mathbf{x}\in \N^3
\\ 1\le \mathbf{x}\le X}} \sum_{\substack{\mathbf{y}\in \N^2
\\ 1\le \mathbf{y}\le X}} \sum_{\substack{\mathbf{z}\in
\N^{n-5}\\ 1\le \mathbf{z}\le
X}}e\Big(\alpha(\mathbf{x}^{T}A_1\mathbf{x}+2\mathbf{x}^T\gamma_3\xi^{T}\mathbf{z}
+\mathbf{z}^{T}D\mathbf{z}+
h\mathbf{z}^{T}\xi\xi^{T}\mathbf{z})\Big)
\\ & \ \ \ \ \ \ \ \ \ \ \ \ \ \ \ \ \ \ \ \ \ \times
e\Big(\alpha(2\mathbf{x}^{T}C\mathbf{y}
+\mathbf{y}^{T}A_2\mathbf{y}+2\mathbf{z}^{T}C^{T}\mathbf{y})\Big)\Lambda(\mathbf{x})
\Lambda(\mathbf{y}) \Lambda(\mathbf{z}).\end{align*} We introduce
new variables $\mathbf{w}\in \Z^2$ and $s\in \Z$ to replace
$2\mathbf{x}^{T}C +\mathbf{y}^{T}A_2+2\mathbf{z}^{T}C^{T}$ and
$\xi^{T}\mathbf{z}$, respectively. Therefore, we have
\begin{align*} S(\alpha)=& \int_{[0,1]^3}\sum_{|s|\ll X}\sum_{\substack{\mathbf{w}\in
\Z^2
\\ |\mathbf{w}|\ll X}}\sum_{\substack{\mathbf{x}\in \N^3
\\ 1\le \mathbf{x}\le X}} \sum_{\substack{\mathbf{y}\in \N^2
\\ 1\le \mathbf{y}\le X}} \sum_{\substack{\mathbf{z}\in
\N^{n-5}\\ 1\le \mathbf{z}\le X}}\Lambda(\mathbf{x})
\Lambda(\mathbf{y}) \Lambda(\mathbf{z})
\\ & \ \ \ \ \ \ \ \ \ \ \ \ \ \ \ \ \ \ \ \ \ \times
e\Big(\alpha(\mathbf{x}^{T}A_1\mathbf{x}+\mathbf{w}^{T}\mathbf{y}+\mathbf{z}^{T}D\mathbf{z}+
2\mathbf{x}^T\gamma_3s+hs^2)\Big)
\\ & \ \ \ \ \ \ \ \ \ \ \ \ \ \ \ \ \ \ \ \ \ \times
e\Big((2\mathbf{x}^{T}C+
\mathbf{y}^{T}A_2+2\mathbf{z}^{T}V^{T}-\mathbf{w}^{T})\boldsymbol{\beta}'\Big)
\\ & \ \ \ \ \ \ \ \ \ \ \ \ \ \ \ \ \ \ \ \ \ \times
e\Big((\xi^{T}\mathbf{z}-s)\beta_3\Big)d\boldsymbol{\beta},\end{align*}
where $\boldsymbol{\beta}'=(\beta_1,\beta_2)^T$,
$\boldsymbol{\beta}=(\beta_1,\beta_2, \beta_3)^T$  and
$d\boldsymbol{\beta}=d\beta_1d\beta_2d\beta_3$. We define
\begin{align*} \mathcal{F}(\alpha,\boldsymbol{\beta})=&
\sum_{|s|\ll X}\sum_{\substack{\mathbf{x}\in \N^3
\\ 1\le \mathbf{x}\le X}}
e\Big(\alpha(\mathbf{x}^{T}A_1\mathbf{x}+2\mathbf{x}^T\gamma_3s+hs^2)
+2\mathbf{x}^{T}C\boldsymbol{\beta}'-s\beta_3\Big)\Lambda(\mathbf{x}).\end{align*}
On writing $I_2=(\mathbf{e}_1,\mathbf{e}_2)$, we
introduce\begin{align*} \mathcal{H}_j(\alpha,\boldsymbol{\beta})=&
\sum_{|w|\ll X}\sum_{\substack{1\le y\le X}} e(\alpha wy
+y\rho_j^{T}\boldsymbol{\beta}'-w\mathbf{e}_j^T\boldsymbol{\beta}')\Lambda(y),\end{align*}
where $\rho_j=(a_{3+j,4},a_{3+j,5})^{T}$ for $1\le j\le 2$. Let
$\xi=(\epsilon_1,\ldots,\epsilon_{n-5})^{T}$. Then we define
\begin{align*} f_j(\alpha,\boldsymbol{\beta})=& \sum_{\substack{1\le z\le X}} e(\alpha d_j
z^2
+2z\upsilon_j^{T}\boldsymbol{\beta}'+\epsilon_jz\beta_3)\Lambda(z),\end{align*}
where $V=(\upsilon_1, \ldots,\upsilon_{n-5})$ and
$\xi_j=(a_{4,5+j},a_{5,5+j})^{T}$ for $1\le j\le n-5$. With above
notations, we obtain
\begin{align}\label{intcase2} \int_{\mathfrak{m}}\big|S(\alpha)\big|d\alpha \le & \int_{\mathfrak{m}}\int_{[0,1]^3}
\Big|\mathcal{F}(\alpha,\boldsymbol{\beta})\mathcal{H}_1(\alpha,\boldsymbol{\beta})
\mathcal{H}_2(\alpha,\boldsymbol{\beta}) \prod_{j=1}^{n-5}
f_j(\alpha,\boldsymbol{\beta})\Big|d\boldsymbol{\beta}d\alpha.\end{align}
Let
\begin{align*} \mathcal{I}_1=\int_{[0,1]^4}
\Big|\mathcal{F}(\alpha,\boldsymbol{\beta})f_i(\alpha,\boldsymbol{\beta})\Big|^2d\boldsymbol{\beta}d\alpha.\end{align*}
and
\begin{align*} \mathcal{I}_2=\int_{[0,1]^4} \Big|\mathcal{H}_1(\alpha,\boldsymbol{\beta})
\mathcal{H}_2(\alpha,\boldsymbol{\beta})
f_j(\alpha,\boldsymbol{\beta})\Big|^2d\boldsymbol{\beta}d\alpha.\end{align*}
By (\ref{intcase2}) and the Cauchy-Schwarz inequality, one has for
$i\not=j$ that
\begin{align}\label{eq2-I1I2} \int_{\mathfrak{m}}\big|S(\alpha)\big|d\alpha \le &
\mathcal{I}_1^{1/2}\mathcal{I}_2^{1/2} \sup_{\substack{\alpha\in\mathfrak{m}\\
\boldsymbol{\beta}\in [0,1]^3}}
\Big|\prod_{k\not=i,j}f_k(\alpha,\boldsymbol{\beta})\Big|.\end{align}
One can deduce by Lemmas \ref{lemma55} and \ref{lemma56} that
\begin{align*} \mathcal{I}_1
\ll L^{8}\sum_{\substack{
|\mathbf{x}|,|\mathbf{x}'|,|s|,|s'|,|z|,|z'|\ll X
\\ \mathbf{x}^{T}A_1\mathbf{x}'+2\mathbf{x}^{T}\gamma_3s'+2s\gamma_3^{T}\mathbf{x}'+hss'+d_izz'=0
\\ \mathbf{x}^{T}C+ z \upsilon_i^{T}  =0
\\  s=\epsilon_i z}}1.\end{align*} Note that
\begin{align*} \sum_{\substack{
|\mathbf{x}|,|\mathbf{x}'|,|s|,|s'|,|z|,|z'|\ll X
\\ \mathbf{x}^{T}A_1\mathbf{x}'+2\mathbf{x}^{T}\gamma_3s'+2s\gamma_3^{T}\mathbf{x}'+hss'+d_izz'=0
\\ \mathbf{x}^{T}C+ z \upsilon_i^{T}  =0
\\  s=\epsilon_i z}}1=&
\sum_{\substack{ |\mathbf{x}|,|\mathbf{x}'|,|s'|,|z|,|z'|\ll X
\\ \mathbf{x}^{T}A_1\mathbf{x}'+2\mathbf{x}^{T}\gamma_3s'+2\epsilon_i z\gamma_3^{T}\mathbf{x}'+
h\epsilon_i zs'+d_izz'=0
\\ \mathbf{x}^{T}C+ z \upsilon_i^{T}  =0}}1
\\ =&
\sum_{\substack{ |\mathbf{x}|,|\mathbf{x}'|,|s|,|s'|,|z|,|z'|\ll X
\\ \mathbf{x}^{T}A_1\mathbf{x}'+2ss'+2\epsilon_i z\gamma_3^{T}\mathbf{x}'+
h\epsilon_i zs'+d_izz'=0
\\ \mathbf{x}^{T}(C,\gamma_3)+ (z \upsilon_i^{T},-s)  =0}}1.\end{align*}
Recalling $\rank(C,\gamma_3)=3$, one can replace $\mathbf{x}$ by $
(z \upsilon_i^{T},-s)(C,\gamma_3)^{-1}$. Therefore, by Lemma
\ref{lemma57}, one has
\begin{align}\label{boundI1-2}\mathcal{I}_1
\ll X^5L^{9}\ \ \textrm{ if } \ \ d_i\not=0\ \textrm{ or } \
\epsilon_i\not=0.\end{align} Similarly, one can deduce from Lemmas
\ref{lemma55}-\ref{lemma57} that
\begin{align}\label{boundI2-2}\mathcal{I}_2 \ll
X^5L^5 \ \ \textrm{ if } \ \ \epsilon_j\not=0 .\end{align}

Since $\rank(B_3)=3$, one has $\epsilon_l\not=0$ for some $l$
satisfying $2\le l\le n-5$. We may assume $\epsilon_2\not=0$. We
also have $\epsilon_1\not=0$ due to $\rank(B)=3$. Since
$\rank(D)+\rank(V)+1+5\geqslant \rank(A)\geqslant 9$, we obtain
$\rank(D)\geqslant 1$. Therefore, if $d_l\not=0$ or
$\epsilon_l\not=0$ for some $l\ge 3$, then we can find $i,j,k$
pairwise distinct so that $(d_i,\epsilon_i)^{T}\not=0\in \Q^2$,
$\epsilon_j\not=0$ and $d_k\not=0$. Then we conclude from
(\ref{eq2-I1I2}), (\ref{boundI1-2}) and (\ref{boundI2-2}) that
\begin{align*}
\int_{\mathfrak{m}}\big|S(\alpha)\big|d\alpha \ll X^{n-2}L^{-K/6}
.\end{align*}

Next we assume $d_l=\epsilon_l=0$ for all $l\ge 3$. Then we can
represent $A$ in the form
\begin{align*} A=\begin{pmatrix}A_1 & H & 0
\\ H^{T} & A_2 & W  \\ 0 & W^{T} & 0  \end{pmatrix},\end{align*}
where $H\in M_{3,4}(\Z)$ and $W\in M_{2,n-7}(\Z)$.  It follows
from $\rank(B)=3$ and $\rank{A}\ge 9$ that $\rank(H)\ge 3$ and
$\rank(W)\ge 2$. We apply Lemma \ref{lemma45} to conclude
\begin{align*}
\int_{\mathfrak{m}}\big|S(\alpha)\big|d\alpha \ll X^{n-2}L^{-K/3}
.\end{align*} The proof of Lemma \ref{lemma52} is completed.
 \vskip3mm

\subsection{Proof of Lemma \ref{lemma53}}

The proof of Lemma \ref{lemma59} can be modified to establish the
following result. The detail of the proof is omitted.
\begin{lemma}\label{lemma510}If
$\rank(B_1)=2$ and $\rank(B_2)=\rank(B_3)=3$, then we can write
$A$ in the form
\begin{align}\label{Athird} A=\begin{pmatrix}A_1 & \gamma_1 & (\gamma_2,\gamma_3)C
\\ \gamma_1^{T} & a & \upsilon^{T}
\\ C^{T}(\gamma_2,\gamma_3)^{T} & \upsilon & D+C^{T}HC  \end{pmatrix},\end{align}
where $\gamma_1\in\Z^3$, $\gamma_2,\gamma_3\in \Q^3$, $C\in
M_{2,n-4}(\Z)$, $a\in \Z$, $\upsilon\in \Z^{n-4}$, $H\in
M_{2,2}(\Q)$ and $D=\dia\{d_1,\ldots, d_{n-4}\}\in
M_{n-4,n-4}(\Q)$ is a diagonal matrix. Moreover, one has
$(\gamma_1,\gamma_2,\gamma_3)\in GL_3(\Q)$.
\end{lemma}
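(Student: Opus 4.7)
The approach is to adapt the proof of Lemma~\ref{lemma59}, reflecting that under the new hypotheses the columns $\gamma_j':=(a_{1,3+j},a_{2,3+j},a_{3,3+j})^{T}$ ($j\ge 4$) now lie in the $2$-dimensional space $\langle\gamma_2',\gamma_3'\rangle$ rather than in $\langle\gamma_3'\rangle$ alone. Since $\rank(B)=3$, $\gamma_1',\gamma_2',\gamma_3'$ are linearly independent; combined with $\rank(B_1)=2$, this forces $\gamma_j'\in\langle\gamma_2',\gamma_3'\rangle$ for all $j\ge 4$, and I would write $\gamma_j'=\alpha_j\gamma_2'+\beta_j\gamma_3'$ with $\alpha_j,\beta_j\in\Q$. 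Picking $N\in\N$ that clears every denominator and setting $\gamma_1=\gamma_1'$, $\gamma_2=\gamma_2'/N$, $\gamma_3=\gamma_3'/N$, together with $c_1=(N,0)^T$, $c_2=(0,N)^T$ and $c_l=(N\alpha_{l+1},N\beta_{l+1})^T\in\Z^{2}$ for $3\le l\le n-4$, one verifies $(\gamma_2,\gamma_3)c_l=\gamma_{l+1}'$, so $C:=(c_1,\ldots,c_{n-4})\in M_{2,n-4}(\Z)$ realises the $(1,3)$-block of $A$ as $(\gamma_2,\gamma_3)C$. Setting $a=a_{4,4}$ and $\upsilon=(a_{4,5},\ldots,a_{4,n})^T$ takes care of the 4th row/column, and $(\gamma_1,\gamma_2,\gamma_3)\in GL_3(\Q)$ is immediate.

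\smallskip

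It remains to exhibit $H\in M_{2,2}(\Q)$ and a diagonal $D\in M_{n-4,n-4}(\Q)$ such that $A_3=(a_{i,j})_{5\le i,j\le n}$ satisfies $A_3=D+C^{T}HC$; equivalently, $a_{i,j}=c_{i-4}^{T}Hc_{j-4}$ for all $5\le i\ne j\le n$. First, for $i\ge 7$ and $j\ge 5$ with $j\ne i$, the hypothesis $\rankoff(A)=3$ applied to the $4\times 4$ minor with rows $\{1,2,3,i\}$ and columns $\{4,5,6,j\}$ gives rank $\le 3$; since $\gamma_1',\gamma_2',\gamma_3'$ are independent, the 4th column has the unique expansion $(0,\alpha_{j-3},\beta_{j-3})$ in the top three rows, and the rank bound forces the same expansion in the 4th row:
\begin{align*}
a_{i,j}=\alpha_{j-3}\,a_{i,5}+\beta_{j-3}\,a_{i,6}.
\end{align*}
To evaluate $a_{5,i}$ for $i\ge 7$, use $\rank(B_2)=3$ to pick $l_2\in\{7,\ldots,n\}$ with $\alpha_{l_2-3}\ne 0$ and apply the same minor argument to rows $\{1,2,3,5\}$ and columns $\{4,6,l_2,i\}$; the top three rows are independent, and one deduces $a_{5,i}=p\,\alpha_{i-3}+q\,\beta_{i-3}$ for constants $p,q\in\Q$ depending only on $a_{5,6}$ and $a_{5,l_2}$. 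Symmetrically, $\rank(B_3)=3$ yields $a_{6,i}=p'\,\alpha_{i-3}+q'\,\beta_{i-3}$. Defining
\[
H\;=\;N^{-2}\begin{pmatrix}p & q\\ p' & q'\end{pmatrix}\in M_{2,2}(\Q)
\]
and substituting into the identity above then gives $a_{i,j}=c_{i-4}^{T}Hc_{j-4}$ for all $5\le i\ne j\le n$, the required compatibility $h_{12}=h_{21}$ following from the symmetry $a_{5,6}=a_{6,5}$. Hence $D:=A_3-C^{T}HC$ is diagonal.

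\smallskip

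The main bookkeeping hurdle is verifying that the single matrix $H$ extracted from rows $5$ and $6$ reproduces every off-diagonal entry $a_{i,j}$ with $i,j\ge 7$: this requires substituting the explicit formulae for $a_{i,5}$ and $a_{i,6}$ into the relation $a_{i,j}=\alpha_{j-3}a_{i,5}+\beta_{j-3}a_{i,6}$, expanding $c_{i-4}^{T}Hc_{j-4}$, and matching term by term. The calculation is routine but lengthy, and is precisely the ``detail of the proof'' that the paper omits when asserting that the proof of Lemma~\ref{lemma59} can be modified.
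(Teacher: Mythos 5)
Your proposal is correct and carries out precisely the modification of the proof of Lemma~\ref{lemma59} that the paper leaves to the reader: the hypothesis $\rank(B_1)=2$ together with $\rank(B)=3$ places the columns $\gamma_j'$ ($j\ge 4$) in $\langle\gamma_2',\gamma_3'\rangle$, and the condition $\rankoff(A)=3$ applied to off-diagonal $4\times 4$ minors (with the pivots $l_2,l_3$ supplied by $\rank(B_2)=\rank(B_3)=3$) determines the off-diagonal part of $A_3$ as $C^{T}HC$, the symmetry $a_{5,6}=a_{6,5}$ giving the needed compatibility. The only stylistic difference from the paper's template is that you verify the block $D+C^{T}HC$ entry by entry via explicit minors rather than row by row as in Lemma~\ref{lemma59}; the underlying mechanism is the same.
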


\begin{lemma}\label{lemma511} Let $A$ be given by (\ref{Athird}).
We write
\begin{align}\label{notationCv}C=(\xi_1,\ldots,\xi_{n-4})\
\textrm{ and }\ \upsilon^{T}=(v_1,\ldots,v_{n-4}).
\end{align}Let
\begin{align}\label{Rijk} R_{i,j,k}=\begin{pmatrix}\xi_i & \xi_j &
\xi_k
\\ v_i & v_j & v_k   \end{pmatrix}.\end{align}Under the conditions
in Lemma \ref{lemma510}, one can find pairwise distinct $i,j,k,u$
with $1\le i,j,k,u\le n-4$ such that at least one of the following
two statements holds: (i) $\rank(R_{i,j,k})=3$ and $d_u\not=0$;
(ii) $\rank(\xi_i,\xi_j)=2$, $d_kd_u\not=0$.
\end{lemma}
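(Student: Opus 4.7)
The plan is to derive an upper bound on $\rank(A)$ from explicit null vectors, combine it with the standing assumption $\rank(A)\ge 9$, and then split into two cases depending on $\rho=\rank\begin{pmatrix}C\\ \upsilon^{T}\end{pmatrix}\in\{2,3\}$. Throughout I set
\[
Z=\{\ell:1\le\ell\le n-4,\ d_\ell\neq 0\},\qquad \omega_\ell=\begin{pmatrix}\xi_\ell\\ v_\ell\end{pmatrix}\in\Q^{3},\qquad r'=\rank\{\omega_\ell:\ell\notin Z\}.
\]

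A direct three-line check on (\ref{Athird}) shows that whenever $\mathbf{w}\in\Q^{n-4}$ satisfies $C\mathbf{w}=\mathbf{0}$, $\upsilon^{T}\mathbf{w}=0$ and $D\mathbf{w}=\mathbf{0}$, the vector $(\mathbf{0}_{3},0,\mathbf{w})^{T}$ lies in $\ker(A)$. This gives $\rank(A)\le 4+\rank\begin{pmatrix}C\\ \upsilon^{T}\\ D\end{pmatrix}$. A short column-rank computation, using that the columns of this last matrix indexed by $Z$ have exclusive nonzero entries in the $D$-block, yields $\rank\begin{pmatrix}C\\ \upsilon^{T}\\ D\end{pmatrix}=|Z|+r'$. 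Combining with $\rank(A)\ge 9$ produces the key inequality
\[
|Z|+r'\,\ge\,5,\qquad r'\le\rho\le 3.
\]
Moreover, because $B=\bigl(\gamma_1,\,(\gamma_2,\gamma_3)\xi_1,\,(\gamma_2,\gamma_3)\xi_2\bigr)$ has rank $3$ and $(\gamma_1,\gamma_2,\gamma_3)\in GL_3(\Q)$, the columns $\xi_1,\xi_2$ must be linearly independent; in particular $\rank(C)=2$ and $\rho\in\{2,3\}$.

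If $\rho=3$, I produce conclusion (i). Since $r'\le 3$ the inequality forces $|Z|\ge 2$. When $|Z|\ge 4$, choose any $i,j,k$ with $\rank(\omega_i,\omega_j,\omega_k)=3$ and $u\in Z\setminus\{i,j,k\}$. When $|Z|\in\{2,3\}$, one has $r'\ge 5-|Z|\ge 2$; pick $i,j\notin Z$ with $\omega_i,\omega_j$ linearly independent, then take a third index $k\notin Z$ (if $r'=3$) or $k\in Z$ with $\omega_k\notin\mathrm{span}(\omega_i,\omega_j)$ (if $r'=2$, where such $k$ exists since $\rho>r'$), and finally $u\in Z\setminus\{k\}$, which is nonempty because $|Z|\ge 2$.

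If $\rho=2$, conclusion (i) is impossible and I aim for (ii); here $|Z|\ge 3$. The choice $(i,j)=(1,2)$ works whenever $|Z\setminus\{1,2\}|\ge 2$, which covers every configuration except $|Z|=3$ with $\{1,2\}\subset Z$. In that remaining case, if some $\ell\notin Z$ has $\xi_\ell\neq 0$ then $\rank(C)=2$ guarantees $\xi_\ell$ is independent of at least one of $\xi_1,\xi_2$ (otherwise both $\xi_1,\xi_2$ would be scalar multiples of $\xi_\ell$, contradicting their independence); take that one as $i$, $\ell$ as $j$, and the two remaining indices of $Z$ as $k,u$. Otherwise $\xi_\ell=0$ for every $\ell\notin Z$, which forces $\omega_\ell=(0,0,v_\ell)^{T}$ there and hence $r'\le 1$, yielding $|Z|+r'\le 4$, a contradiction. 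The main obstacle is precisely this last configuration: the bound $|Z|+r'\ge 5$ is exactly strong enough to exclude it.
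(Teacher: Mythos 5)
Your argument is correct, and every index selection checks out (including the distinctness of $i,j,k,u$ in each branch). It follows the same basic strategy as the paper's proof --- produce explicit kernel vectors of $A$ from the common kernel of $C$, $\upsilon^{T}$ and $D$ to get a rank inequality, use $\rank(\xi_1,\xi_2)=2$ coming from $\rank(B)=3$, and then do an elementary case analysis --- but your organization is genuinely different and in one respect sharper. The paper opens with the cruder bound $\rank(A)\le \rank(D)+\rank(\upsilon)+\rank(C)+4$ and splits on $\rank(D)$; at two points (choosing a rank-$3$ triple $R_{i,j,k}$ disjoint from a surviving index $u$ with $d_u\neq 0$ when $\rank(D)=2$, and the closing contradiction when $\xi_s=0$ for all $s\ge 4$) it implicitly needs the refined inequality $\rank(A)\le 4+\rank\bigl(\begin{smallmatrix}C\\ \upsilon^{T}\\ D\end{smallmatrix}\bigr)$ rather than the one it states. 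You isolate precisely that refined inequality in the quantitative form $|Z|+r'\ge 5$, and you pivot the case analysis on $\rho=\rank\bigl(\begin{smallmatrix}C\\ \upsilon^{T}\end{smallmatrix}\bigr)\in\{2,3\}$ instead of on $\rank(D)$. This buys two things: the dichotomy between conclusions (i) and (ii) becomes structural (conclusion (i) is exactly the $\rho=3$ regime, and is provably unavailable when $\rho=2$), and in every branch the availability of the fourth index $u$ outside $\{i,j,k\}$ is explicitly guaranteed rather than left to the reader. The cost is a slightly longer enumeration of sub-cases, but each choice of $(i,j,k,u)$ is fully justified.
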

\begin{proof}It follows from $9\le \rank(A)\le
\rank(D)+\rank(V)+\rank(C)+4$ that $\rank(D)\ge 2$. If $\rank(D)=
2$, say $d_1d_2\not=0$, then $\rank(R)\ge 3$, where
\begin{align*} R=\begin{pmatrix}\xi_1 & \cdots &
\xi_{n-4}
\\ v_1 & \cdots & v_{n-4}   \end{pmatrix}.\end{align*}
Then statement (i) holds. Next we assume $\rank(D)\ge 3$. Note
that $\rank(\xi_1,\xi_2)=2$ due to $\rank(B)=3$. If $d_rd_s\not=0$
for some $r>s\ge 3$, then statement (i)  follows by choosing
$i=1,j=2,k=r$ and $u=s$. Therefore, we now assume that
$\rank(D)=3$ and $d_1d_2\not=0$. Without loss of generality, we
suppose that $d_3\not=0$ and $d_s=0(4\le s\le n-4)$ . We consider
$\rank(\xi_1,\xi_s)$ and $\rank(\xi_2,\xi_s)$ for $4\le s\le n-4$.
If $\rank(\xi_1,\xi_s)=2$ for some $s$ with $4\le s\le n-4$, then
one can choose $i=1$, $j=s$ and $k=3$ to establish statement (ii).
Similarly, statement (ii) follows if $\rank(\xi_2,\xi_s)=2$ for
some $s$ with $4\le s\le n-4$. Thus it remains to consider the
case $\rank(\xi_1,\xi_s)=\rank(\xi_2,\xi_s)=1$ for $4\le s\le
n-4$. However, it follows from
$\rank(\xi_1,\xi_2)=\rank(\xi_1,\xi_2,\xi_s)=2$ that $\xi_s=0$,
and this is contradictory to the condition $\rank(A)\ge 9$. We
complete the proof of Lemma \ref{lemma511}.
\end{proof}

\noindent{\textit{Proof of Lemma \ref{lemma53}}}. We deduce from
Lemma \ref{lemma510} that
\begin{align*} S(\alpha)=& \sum_{\substack{\mathbf{x}\in \N^3
\\ 1\le \mathbf{x}\le X}} \sum_{\substack{1\le y\le X}} \sum_{\substack{\mathbf{z}\in
\N^{n-4}\\ 1\le \mathbf{z}\le
X}}e\Big(\alpha\big(\mathbf{x}^{T}A_1\mathbf{x}+2\mathbf{x}^T(\gamma_1,\gamma_2)C\mathbf{z}
+\mathbf{z}^{T}D\mathbf{z}+ \mathbf{z}^{T}C^TC\mathbf{z}\big)\Big)
\\ & \ \ \ \ \ \ \ \ \ \ \ \ \ \ \ \ \ \ \ \ \ \times
e\Big(\alpha\big(2\mathbf{x}^{T}\gamma_1y
+ay^2+2\mathbf{z}^{T}\upsilon^{T}y\big)\Big)\Lambda(\mathbf{x})
\Lambda(\mathbf{y}) \Lambda(\mathbf{z}).\end{align*} We introduce
new variables $w\in \Z$ and $\mathbf{h}\in \Z^2$ to replace
$2\mathbf{x}^{T}\gamma_1 +ay+2\mathbf{z}^{T}\upsilon^{T}$ and
$C\mathbf{z}$, respectively. Therefore, we have
\begin{align*} S(\alpha)=& \int_{[0,1]^3}
\sum_{\substack{\mathbf{h}\in \Z^2
\\ |\mathbf{h}|\ll X}}\sum_{\substack{|w|\ll X}}\sum_{\substack{\mathbf{x}\in \N^3
\\ 1\le \mathbf{x}\le X}} \sum_{ 1\le y\le X} \sum_{\substack{\mathbf{z}\in
\N^{n-5}\\ 1\le \mathbf{z}\le X}}\Lambda(\mathbf{x})
\Lambda(\mathbf{y}) \Lambda(\mathbf{z})
\\ & \ \ \ \ \ \ \ \ \ \ \ \ \ \ \ \ \ \ \ \ \ \times
e\Big(\alpha\big(\mathbf{x}^{T}A_1\mathbf{x}+2\mathbf{x}^T(\gamma_1,\gamma_2)\mathbf{h}
+\mathbf{z}^{T}D\mathbf{z}+ \mathbf{h}^{T}H\mathbf{h}+wy\big)\Big)
\\ & \ \ \ \ \ \ \ \ \ \ \ \ \ \ \ \ \ \ \ \ \ \times
e\Big((2\mathbf{x}^{T}\gamma_1
+ay+2\mathbf{z}^{T}\upsilon^{T}-w)\beta_1\Big)
\\ & \ \ \ \ \ \ \ \ \ \ \ \ \ \ \ \ \ \ \ \ \ \times
e\Big((C\mathbf{z}-\mathbf{h})^T\boldsymbol{\beta}'\Big)d\boldsymbol{\beta},\end{align*}
where $\boldsymbol{\beta}=(\beta_1, \beta_2, \beta_3)^T$,
$\boldsymbol{\beta}'=(\beta_2, \beta_3)^T$ and
$d\boldsymbol{\beta}=d\beta_1d\beta_2d\beta_3$. Now we introduce
\begin{align*} \mathcal{F}(\alpha,\boldsymbol{\beta})=&
\sum_{\substack{\mathbf{h}\in \Z^2
\\ |\mathbf{h}|\ll X}}\sum_{\substack{\mathbf{x}\in \N^3
\\ 1\le \mathbf{x}\le X}}e\Big(\alpha\big(\mathbf{x}^{T}A_1\mathbf{x}+2\mathbf{x}^T(\gamma_1,\gamma_2)\mathbf{h}
+ \mathbf{h}^{T}H\mathbf{h}\big)\Big) \\ & \ \ \ \ \ \ \ \ \ \ \ \
\ \ \ \ \times e\Big(2\mathbf{x}^{T}\gamma_1
\beta_1-\mathbf{h}^T\boldsymbol{\beta}'\Big)\Lambda(\mathbf{x}),\end{align*}
and\begin{align*} \mathcal{H}(\alpha,\boldsymbol{\beta})=&
\sum_{|w|\ll X}\sum_{\substack{1\le y\le X}} e\Big(\alpha wy
+(ay-w)\beta_1\Big)\Lambda(y).\end{align*} On recalling notations
in (\ref{notationCv}), we define
\begin{align*} f_j(\alpha,\boldsymbol{\beta})=& \sum_{\substack{1\le z\le X}} e\Big(\alpha d_j
z^2
+2zv_jb_1+z\xi_j^{T}\boldsymbol{\beta}'\Big)\Lambda(z).\end{align*}
Then we obtain from above
\begin{align}\label{intcase3} \int_{\mathfrak{m}}\big|S(\alpha)\big|d\alpha\le& \int_{\mathfrak{m}}\int_{[0,1]^3}
\Big|\mathcal{F}(\alpha,\boldsymbol{\beta})\mathcal{H}(\alpha,\boldsymbol{\beta})
 \prod_{j=1}^{n-4}
f_j(\alpha,\boldsymbol{\beta})\Big|d\boldsymbol{\beta}d\alpha.\end{align}
By Lemmas \ref{lemma55} and \ref{lemma56},
\begin{align*} &\int_{[0,1]^4} \Big|\mathcal{H}(\alpha,\boldsymbol{\beta})
f_i(\alpha,\boldsymbol{\beta})f_j(\alpha,\boldsymbol{\beta})
f_k(\alpha,\boldsymbol{\beta})\Big|^2d\boldsymbol{\beta}d\alpha
\\ \ll & L^8 \sum_{\substack{ |w|,|w'|,|y|,|y'|,|z_i|,|z_i'|,
|z_j|,|z_j'|,|z_k|,|z_k'|\ll X \\
wy'+yw'+d_iz_iz_i'+d_jz_jz_j'+d_kz_kz_k'=0
\\ ay-w+v_iz_i+v_jz_j+v_kz_k=0
\\ z_i\xi_i+z_j\xi_j+z_k\xi_k=0}}1.\end{align*}
If $\rank(R_{i,j,k})=3$, then we can represent $z_i,z_j$ and $z_k$
by linear functions of $y$ and $w$.  Then by Lemma \ref{lemma57},
\begin{align*} &\int_{[0,1]^4} \Big|\mathcal{H}(\alpha,\boldsymbol{\beta})
f_i(\alpha,\boldsymbol{\beta})f_j(\alpha,\boldsymbol{\beta})
f_k(\alpha,\boldsymbol{\beta})\Big|^2d\boldsymbol{\beta}d\alpha
\ll X^5L^9.\end{align*} If $\rank(\xi_i,\xi_j)=2$, then we can
represent $z_i,z_j$ and $w$ by linear functions of $y$ and $z_k$.
Then we obtain by Lemma \ref{lemma57} again\begin{align*}
&\int_{[0,1]^4} \Big|\mathcal{H}(\alpha,\boldsymbol{\beta})
f_i(\alpha,\boldsymbol{\beta})f_j(\alpha,\boldsymbol{\beta})
f_k(\alpha,\boldsymbol{\beta})\Big|^2d\boldsymbol{\beta}d\alpha
\ll X^5L^9\end{align*}  provided that $d_k\not=0$. One can also
deduce from Lemmas \ref{lemma55}-\ref{lemma57} that
\begin{align*} &\int_{[0,1]^4}
|\mathcal{F}(\alpha,\boldsymbol{\beta})|^2d\boldsymbol{\beta}d\alpha
\ll X^5L^{7}.\end{align*}If $1\le i,j,k\le n-4$ are pairwise
distinct, then one has by (\ref{intcase1}) and the Cauchy-Schwarz
inequality
\begin{align*} \int_{\mathfrak{m}}|S(\alpha)|d\alpha \le &
\sup_{\substack{\alpha\in\mathfrak{m}\\ \boldsymbol{\beta}\in
[0,1]^3}}
\Big|\prod_{u\not=i,j,k}f_u(\alpha,\boldsymbol{\beta})\Big|\,
\Big(\int_{[0,1]^4}
\Big|\mathcal{F}(\alpha,\boldsymbol{\beta})\Big|^2d\boldsymbol{\beta}d\alpha\Big)^{1/2}
\\ & \times \Big(\int_{[0,1]^4}\Big |\mathcal{H}(\alpha,\boldsymbol{\beta})
f_i(\alpha,\boldsymbol{\beta})f_j(\alpha,\boldsymbol{\beta})
f_k(\alpha,\boldsymbol{\beta})\Big|^2d\boldsymbol{\beta}d\alpha\Big)^{1/2}.\end{align*}
Now it follows from above on together with Lemmas \ref{lemma43}
and \ref{lemma511}
\begin{align*} \int_{\mathfrak{m}}\big|S(\alpha)\big|d\alpha \ll X^{n-2}L^{-K/6}.\end{align*}
We complete the proof of Lemma \ref{lemma53}.

 \vskip3mm

\subsection{Proof of Lemma \ref{lemma54}}

Similar to Lemmas \ref{lemma58}-\ref{lemma510}, we also have the
following result.
\begin{lemma}\label{lemma512}If
$\rank(B_1)=\rank(B_2)=\rank(B_3)=4$, then we can write $A$ in the
form
\begin{align}\label{Afourth} A=\begin{pmatrix}A_1 & (\gamma_1,\gamma_2,\gamma_3)C
\\ C^{T}(\gamma_1,\gamma_2,\gamma_3)^{T} &  D+
C^{T}HC\end{pmatrix},\end{align} where $C\in M_{3,n-3}(\Z)$,
$\gamma_1,\gamma_2,\gamma_3\in \Q^{3}$, $H\in M_{3,3}(\Q)$ and
$D=\dia\{d_1,\ldots, d_{n-3}\}\in M_{n-3,n-3}(\Q)$ is a diagonal
matrix. Furthermore, we have $(\gamma_1,\gamma_2,\gamma_3)\in
GL_3(\Q)$.
\end{lemma}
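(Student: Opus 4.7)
The plan is to follow the template of Lemmas~\ref{lemma58}--\ref{lemma510}: use the rank hypotheses to pin down the block structure of $A$ successively. Writing $\gamma_j'=(a_{1,3+j},a_{2,3+j},a_{3,3+j})^{T}$ for $1\le j\le n-3$, the assumption $\rank(B)=3$ already forces every $\gamma_j'$ to lie in the span of $\gamma_1',\gamma_2',\gamma_3'$. Taking $\gamma_1,\gamma_2,\gamma_3\in\Z^3$ to be a basis of the sublattice of $\Z^3$ generated by all the $\gamma_j'$, I obtain a factorization $(\gamma_1',\ldots,\gamma_{n-3}')=(\gamma_1,\gamma_2,\gamma_3)C$ with $C\in M_{3,n-3}(\Z)$ and $(\gamma_1,\gamma_2,\gamma_3)\in GL_3(\Q)$, handling the top-right block of the claimed form. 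Denote by $c_1,\ldots,c_{n-3}$ the columns of $C$; the first three are linearly independent over $\Q$ because $\gamma_1',\gamma_2',\gamma_3'$ are.

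The key intermediate claim is that for each $i\in\{4,\ldots,n\}$ there exists $\lambda^{(i)}\in\Q^3$ with $a_{i,k}=\sum_{s=1}^{3}\lambda^{(i)}_s a_{s,k}$ for every $k\in\{4,\ldots,n\}\setminus\{i\}$. This comes from the off-diagonal rank condition applied to the $4\times 4$ submatrix on rows $\{1,2,3,i\}$ and columns $\{k_1,k_2,k_3,k\}$, once three indices $k_1,k_2,k_3\in\{4,\ldots,n\}\setminus\{i,k\}$ are chosen at which the top three rows are linearly independent. The availability of such a triple is precisely what the rank hypotheses give: for $i=4,5,6$ one uses $\rank(B_1),\rank(B_2),\rank(B_3)=3$ respectively, and for $i\ge 7$ one uses $\rank(B)=3$.

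Setting $m^{(i)}=(\gamma_1,\gamma_2,\gamma_3)^{T}\lambda^{(i)}\in\Q^3$, the previous claim rewrites as $a_{i,k}=c_{k-3}^{T}m^{(i)}$ for $i\ne k$ in $\{4,\ldots,n\}$, and the symmetry of $A$ then forces the bilinear identity $c_{k-3}^{T}m^{(i)}=c_{i-3}^{T}m^{(k)}$ in the same range. I define $H\in M_{3,3}(\Q)$ by the three equations $Hc_s=m^{(s+3)}$ for $s=1,2,3$; this is well posed since $c_1,c_2,c_3$ span $\Q^3$. Applying the bilinear identity with indices in $\{4,5,6\}$ gives $c_s^{T}Hc_t=c_t^{T}Hc_s$ for $s,t\in\{1,2,3\}$, so $H=H^{T}$.

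The final and most delicate step is to show that $m^{(i)}=Hc_{i-3}$ for every $i\ge 7$; granting this, the chain $c_p^{T}Hc_q=c_q^{T}m^{(p+3)}=a_{p+3,q+3}$ valid for all $p\ne q$ in $\{1,\ldots,n-3\}$ shows that $A_3-C^{T}HC$ is diagonal, and one sets $D=A_3-C^{T}HC$. To verify $m^{(i)}=Hc_{i-3}$ one tests against $c_q^{T}$ for $q\ne i-3$: the case $q\ge 4$ follows immediately by expanding $Hc_{i-3}$ in the basis $c_1,c_2,c_3$ and applying the bilinear identity, while the case $q\in\{1,2,3\}$ additionally invokes the symmetry of $H$ to handle the ``diagonal'' term $c_s^{T}m^{(s+3)}$. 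Since $i\ge 7$ forces $i-3\ge 4$, all three of $c_1,c_2,c_3$ appear among the admissible $c_q$, and their linear independence annihilates $m^{(i)}-Hc_{i-3}$. The main obstacle is precisely this last step: reconciling the definition of $H$ on $\{c_1,c_2,c_3\}$ with the constraints imposed by the $m^{(i)}$ for $i\ge 7$, which is where all of the accumulated symmetry identities must be combined.
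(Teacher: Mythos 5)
Your argument is correct and supplies the details of a proof the paper omits (it says only ``similar to Lemmas~\ref{lemma58}--\ref{lemma510}''): it is the same row-representation argument via the off-diagonal rank condition that underlies Lemma~\ref{lemma59}, with your symmetric matrix $H$ (defined by $Hc_s=m^{(s+3)}$ and checked consistent on the remaining columns) playing the role of the rank-one correction $h\xi\xi^{T}$ there. Note only that the hypothesis as printed, $\rank(B_1)=\rank(B_2)=\rank(B_3)=4$, is a typo for $3$ (the $B_i$ are $3\times(n-5)$ matrices, and Lemma~\ref{lemma54} invokes this lemma under the rank-$3$ hypothesis), which is how you have correctly read it.
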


\begin{lemma}\label{lemma513} Let $A$ be given by (\ref{Athird}) satisfying the
conditions in Lemma \ref{lemma510}. We write
\begin{align}\label{notationC}C=(\xi_1,\ldots,\xi_{n-4})\
.
\end{align}Then one can find pairwise distinct
$u_1,u_2,u_3,u_4,u_5,u_6$ with $1\le u_1,u_2,u_3,u_4,u_5,u_6\le
n-3$ so that $\rank(\xi_{u_1},\xi_{u_2},\xi_{u_3})=3$ and
$d_{u_4}d_{u_5}d_{u_6}\not=0$.
\end{lemma}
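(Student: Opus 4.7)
My plan is to reduce the problem to a single combinatorial inequality and then carry out an explicit construction of the six indices. Set $I = \{i \in \{1,\ldots,n-3\} : d_i \neq 0\}$ and write $C|_{I^c}$ for the $3 \times |I^c|$ submatrix of $C$ on the columns indexed by $I^c = \{1,\ldots,n-3\}\setminus I$. I first aim to establish
\[
|I| + \rank(C|_{I^c}) \ge 6.
\]
To do so, I would conjugate $A$ by $\dia(P^{-1}, I_{n-3})$ with $P=(\gamma_1,\gamma_2,\gamma_3) \in GL_3(\Q)$ to reach the equivalent (same-rank) form $\tilde A = \begin{pmatrix} A_1' & C \\ C^T & D+C^T H C \end{pmatrix}$. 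Partition the rows of $\tilde A$ into three groups: the top three rows; the rows of the lower block indexed by $I$; and the rows indexed by $I^c$. On the third group the diagonal contribution of $D$ vanishes (since $d_i=0$ for $i \in I^c$), so every such row reads $\xi_i^T(I_3, HC)$; the whole group therefore factors as $C|_{I^c}^T \cdot (I_3, HC)$ and has rank at most $\rank(C|_{I^c})$. Row-rank subadditivity now gives $9 \le \rank(A) \le 3 + |I| + \rank(C|_{I^c})$.

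Write $r = \rank(C|_{I^c})$, so $|I| + r \ge 6$. Choose $k_1,\ldots,k_r \in I^c$ with $\xi_{k_1},\ldots,\xi_{k_r}$ linearly independent; their span $L$ coincides with $\mathrm{span}(\xi_j : j \in I^c)$. Since $\rank(B)=3$ forces $\rank(C)=3$, the full family $\{\xi_i\}$ spans $\Q^3$, so the vectors $\{\xi_i : i \in I\}$ surject onto the $(3-r)$-dimensional quotient $\Q^3/L$; pick $\ell_1,\ldots,\ell_{3-r} \in I$ with $\xi_{\ell_j}$ linearly independent modulo $L$ and set $\{u_1,u_2,u_3\} = \{k_1,\ldots,k_r\} \cup \{\ell_1,\ldots,\ell_{3-r}\}$, so that $\rank(\xi_{u_1},\xi_{u_2},\xi_{u_3})=3$. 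Finally, take $u_4,u_5,u_6$ to be any three distinct elements of $I \setminus \{\ell_1,\ldots,\ell_{3-r}\}$; this set has size $|I|+r-3 \ge 3$ by the inequality, so such a choice exists, and all six indices are distinct because the $k_j$ lie in $I^c$ while $\{\ell_j\}$ and $\{u_4,u_5,u_6\}$ are taken disjoint within $I$.

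The main obstacle is establishing the rank inequality without invoking a symmetric Schur complement. The cleaner reduction $\rank(A) = \rank(A_1') + \rank(D + C^T H'' C)$ would require $A_1'$ to be invertible, which need not hold here; the row-rank subadditivity argument sidesteps this by directly factoring the $I^c$-rows of $\tilde A$ through $C|_{I^c}^T$. Once the inequality $|I|+r \ge 6$ is in hand, the construction above is uniform in $r$ and requires no case analysis on the size of $I$.
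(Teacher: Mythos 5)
Your proof is correct, and it takes a genuinely different route from the paper's. The paper first deduces $\rank(D)\ge 3$ and then runs a case analysis on $\rank(D)\in\{3\}$, $\{4\}$, $\{5\}$, $\ge 6$, in each case exhibiting the six indices by hand (and in the $\rank(D)=4$ subcase the choice of which $\xi_{j_i}$ to adjoin to $\xi_{s_1},\xi_{s_2}$ is argued somewhat loosely). You instead compress everything into the single inequality $|I|+\rank(C|_{I^c})\ge 6$, proved by normalizing the off-diagonal block via congruence by $\dia(P^{-1},I_{n-3})$ and applying row-rank subadditivity to the three row groups — this is the same underlying mechanism the paper uses implicitly each time it invokes $\rank(A)\le \rank(D)+\rank(\cdot)+\cdots$, but stated once and for all. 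Your basis-extension construction (a basis of $L=\mathrm{span}(\xi_j:j\in I^c)$ completed by vectors from $I$ independent modulo $L$, then three further elements of $I$ for the nonzero diagonal entries) is uniform in $r=\rank(C|_{I^c})\in\{0,1,2,3\}$ and needs no subcases; the counting $|I|-(3-r)=|I|+r-3\ge 3$ is exactly what guarantees $u_4,u_5,u_6$ exist and are distinct from the rest. The only inputs you use, $\rank(A)\ge 9$ and $\rank(\xi_1,\xi_2,\xi_3)=3$ (from $\rank(B)=3$ and the invertibility of $(\gamma_1,\gamma_2,\gamma_3)$), are precisely the standing hypotheses of the section, so nothing is missing. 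What your approach buys is brevity and the elimination of the case analysis; what the paper's buys is that it stays entirely at the level of explicit index-chasing without introducing the quotient-space language.
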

\begin{proof}
It follows from $\rank(A)\ge 9$ that $\rank(D)\ge 3$. If
$\rank(D)=3$, then we may assume that $d_1d_2d_3\not=0$ and
$d_j=0$ for $j\ge 4$. Thus $\rank(\xi_4,\ldots,\xi_{n-3})=3$, and
the desired conclusion follows. Next we assume $\rank(D)\ge 4$.
Since $\rank(\xi_1,\xi_2,\xi_3)=3$, the desired conclusion follows
again if there are distinct $k_1,k_2$ and $k_3$ such that
$d_{k_1}d_{k_2}d_{k_3}\not=0$ and $k_1,k_2,k_3\ge 4$. Thus we now
assume that for any distinct $k_1,k_2,k_3\ge 4$, one has
$d_{k_1}d_{k_2}d_{k_3}=0$. This yields $\rank(D)\le 5$. We first
consider the case $\rank(D)=4$. There are at least two distinct
$j_1,j_2\le 3$ such that $d_{j_1}d_{j_2}\not=0$. Suppose that
$d_{s_{i}}=0$ for $1\le i\le n-7$. Then the rank of
$\{\xi_{s_i}\}_{1\le i\le n-7}$ is at least 2, say
$\rank(\xi_{s_1},\xi_{s_2})=2$. Since $\xi_{j_1}$ and $\xi_{j_2}$
are linear independent for $1\le j_1\not=j_2\le 3$, one has either
$\rank(\xi_{j_2},\xi_{s_1},\xi_{s_2})=3$ or
$\rank(\xi_{j_1},\xi_{s_1},\xi_{s_2})=3$. The desired conclusion
follows easily by choosing $u_1=j_1$ or $j_2$, $u_2=s_1$ and
$u_3=s_2$. Now we consider the case $\rank(D)=5$, and we may
assume that $d_1d_2d_3d_4d_5\not=0$ and $d_r=0$ for $r\ge 6$.
Since $\rank(A)\ge 9$, there exist $r\ge 6$ (say $r=6$) such that
$\xi_r\not=0$. Then one can choose $j_1,j_2\le 3$ so that
$\rank(\xi_{j_1},\xi_{j_2},\xi_{6})=3$. The desired conclusion
follows by choosing $u_1=j_1$, $u_2=j_2$ nd $u_3=6$. The proof of
Lemma \ref{lemma513} is completed.\end{proof}

\noindent{\textit{Proof of Lemma \ref{lemma54}}}. We apply Lemma
\ref{lemma512} to conclude that
\begin{align*} S(\alpha)=& \sum_{\substack{\mathbf{x}\in \N^3
\\ 1\le \mathbf{x}\le X}} \sum_{\substack{\mathbf{y}\in
\N^{n-3}\\ 1\le \mathbf{y}\le X}}\Lambda(\mathbf{x})
\Lambda(\mathbf{y})e\Big(\alpha(\mathbf{x}^{T}A_1\mathbf{x}
+\mathbf{y}^{T}D\mathbf{y})\Big)
\\ & \ \ \ \ \ \ \ \ \ \ \ \ \ \ \ \ \ \times
e\Big(\alpha\big(2\mathbf{x}^T(\gamma_1,\gamma_2,\gamma_3)C\mathbf{y}
+\mathbf{y}^{T}C^{T}HC\mathbf{y}\big)\Big).\end{align*} By
orthogonality, one has
\begin{align*} S(\alpha)=& \int_{[0,1]^3}\sum_{\substack{\mathbf{x}\in \N^3
\\ 1\le \mathbf{x}\le X}} \sum_{\substack{\mathbf{y}\in
\N^{n-3}\\ 1\le \mathbf{y}\le X}} \sum_{\substack{\mathbf{z}\in
\Z^3
\\ |\mathbf{z}|\ll X}}\Lambda(\mathbf{x})
\Lambda(\mathbf{y})e\Big(\alpha\big(\mathbf{x}^{T}A_1\mathbf{x}
+\mathbf{y}^{T}D\mathbf{y}\big)\Big) \\ & \ \ \ \ \ \ \ \ \ \ \ \
\ \ \ \ \ \times
e\Big(\alpha(2\mathbf{x}^T(\gamma_1,\gamma_2,\gamma_3)\mathbf{z}
+\mathbf{z}^{T}H\mathbf{z})\Big) \\ & \ \ \ \ \ \ \ \ \ \ \ \ \ \
\ \ \ \ \ \ \ \times
e\Big((\mathbf{y}^{T}C^{T}-\mathbf{z}^{T})\boldsymbol{\beta}\Big)d\boldsymbol{\beta},\end{align*}
where $\boldsymbol{\beta}=(\beta_1,\beta_2,\beta_3)^T$ and
$d\boldsymbol{\beta}=d\beta_1d\beta_2d\beta_3$.
 Now we introduce
\begin{align*} \mathcal{F}(\alpha,\boldsymbol{\beta})=&\sum_{\substack{\mathbf{x}\in \N^3
\\ 1\le \mathbf{x}\le X}} \sum_{\substack{\mathbf{z}\in
\Z^3
\\ 1\le \mathbf{z}\le X}}\Lambda(\mathbf{x})
e\Big(\alpha\big(\mathbf{x}^{T}A_1\mathbf{x}
+2\mathbf{x}^T(\gamma_1,\gamma_2,\gamma_3)\mathbf{z}
+\mathbf{z}^{T}H\mathbf{z}\big)-\mathbf{z}^{T}\boldsymbol{\beta}\Big),\end{align*}
and
\begin{align*} f_j(\alpha,\boldsymbol{\beta})=& \sum_{\substack{1\le y\le X}} e\Big(d_j\alpha
y^2 +2y\xi_j^{T}\boldsymbol{\beta}\Big)\Lambda(y),\end{align*}
where $\xi_1,\ldots,\xi_{n-3}$ is given by (\ref{notationC}). We
conclude from above
\begin{align}\label{intcase4} \int_{\mathfrak{m}}\big|S(\alpha)\big|d\alpha\le& \int_{\mathfrak{m}}\int_{[0,1]^3}
\Big|\mathcal{F}(\alpha,\boldsymbol{\beta})\prod_{j=1}^{n-3}
f_j(\alpha,\boldsymbol{\beta})\Big|d\boldsymbol{\beta}d\alpha.\end{align}
One can easily deduce from Lemmas \ref{lemma55}-\ref{lemma57}
\begin{align}\label{eq527} &\int_{[0,1]^4} \Big|\prod_{i=1}^{5}
f_{u_i}(\alpha,\boldsymbol{\beta})\Big|^2d\boldsymbol{\beta}d\alpha
\ll X^5L^{11}\end{align}provided that
$\rank(\xi_{u_1},\xi_{u_2},\xi_{u_3})=3$ and
$d_{u_4}d_{u_5}\not=0$. Similarly, we also have
\begin{align}\label{eq528} \int_{[0,1]^4}
\Big|\mathcal{F}(\alpha,\boldsymbol{\beta})\Big|^2d\boldsymbol{\beta}d\alpha\ll
X^7L^7.\end{align}
 By (\ref{intcase4})
and the Cauchy-Schwarz inequality, one has for distinct
$u_1,u_2,u_3,u_4$ and $u_5$  that
\begin{align} \int_{\mathfrak{m}}\big|S(\alpha)\big|d\alpha \le &
\sup_{\substack{\alpha\in\mathfrak{m}\\ \boldsymbol{\beta}\in
[0,1]^3}}
\Big|\prod_{k\not=u_1,u_2,u_3,u_4,u_5}f_k(\alpha,\boldsymbol{\beta})\Big|
\Big(\int_{[0,1]^4}
\Big|\mathcal{F}(\alpha,\boldsymbol{\beta})\Big|^2d\boldsymbol{\beta}d\alpha\Big)^{1/2}\notag
\\ & \times \Big(\int_{[0,1]^4} |\prod_{i=1}^{5}
f_{u_i}(\alpha,\boldsymbol{\beta})\Big|^2d\boldsymbol{\beta}d\alpha\Big)^{1/2}.\label{eq529}\end{align}
Combining (\ref{eq527})-(\ref{eq529}) and  Lemmas
\ref{lemma43}-\ref{lemma513}, one has
\begin{align*} \int_{\mathfrak{m}}\big|S(\alpha)\big|d\alpha \ll X^{n-2}L^{-K/6}.\end{align*}
The proof of Lemma \ref{lemma54} is finished.

\vskip3mm
\section{Quadratic forms with off-diagonal rank $\ge 4$}

\begin{proposition}\label{proposition61}
Let $A$ be defined in (\ref{defA}), and let $S(\alpha)$ be defined
in (\ref{defSalpha}). We write
\begin{align}\label{defG} G=\begin{pmatrix}a_{1,5} & \cdots &
a_{1,9}
\\ \vdots & \cdots & \vdots
\\ a_{5,5} & \cdots & a_{5,9}\end{pmatrix}.\end{align}Suppose
that $\det(G)\not=0$. Then we have
\begin{align*}\int_{\mathfrak{m}}\big|S(\alpha)\big| d\alpha \ll
X^{n-2}L^{-K/20},\end{align*} where the implied constant depends
on $A$ and $K$.
\end{proposition}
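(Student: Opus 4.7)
The plan is to adapt the Fourier decoupling and Cauchy--Schwarz machinery of Lemmas \ref{lemma51}--\ref{lemma54} so as to exploit the hypothesis $\det(G) \ne 0$. First, partition the variables as $\bfu = (x_1,\ldots,x_4)^{T}$, $\bfv = (x_5,\ldots,x_9)^{T}$ and $\bfw = (x_{10},\ldots,x_n)^{T}$, and decompose the quadratic form into the three diagonal blocks $\bfu^{T} A_{11}\bfu$, $\bfv^{T} A_{22}\bfv$, $\bfw^{T} A_{33}\bfw$ together with the bilinear cross-terms $2\bfu^{T} G_1 \bfv$, $2\bfu^{T} B_1 \bfw$, $2\bfv^{T} B_2 \bfw$, where $G_1$ denotes the top $4 \times 5$ submatrix of $G$. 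Since $\det(G) \ne 0$, the matrix $G_1$ has full row rank $4$, and its cokernel is pinned down by the fifth row of $G$.

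Next, I would introduce Fourier duals $\boldsymbol{\beta}$ via orthogonality (as in Lemma \ref{lemma55}) to decouple the bilinear interactions with $\bfv$, rewriting $S(\alpha)$ as an integral over $\boldsymbol{\beta}$ of a product $\mathcal{F}(\alpha,\boldsymbol{\beta})\, \mathcal{H}(\alpha,\boldsymbol{\beta}) \prod_{j=1}^{5} f_j(\alpha,\boldsymbol{\beta})$. Here $\mathcal{F}$ is a sum over $\bfu$ carrying the quadratic $\bfu^{T} A_{11}\bfu$ and linear phases in $\boldsymbol{\beta}$ coming from $G_1$; $\mathcal{H}$ absorbs the $\bfw$-variables with its own linear phases from $B_1, B_2$; and each $f_j$ is a single-variable quadratic Weyl sum of the form $\sum_{1\le z\le X} \Lambda(z) e\bigl(\alpha d_j z^{2} + z\,\xi_j^{T}\boldsymbol{\beta}\bigr)$ in one component of $\bfv$, with $d_j$ coming from the diagonal of $A_{22}$.

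Then, in the spirit of the proof of Lemma \ref{lemma51}, apply the Cauchy--Schwarz inequality (twice) to split the integrand into two $L^{2}$-type factors $\mathcal{I}_1, \mathcal{I}_2$, each bounded via Lemmas \ref{lemma55}--\ref{lemma57}; the hypothesis $\det(G) \ne 0$ guarantees that every linear system arising in the $\mathcal{N}_2$-counting of Lemma \ref{lemma56} has rank at least $2$, so Lemma \ref{lemma57} delivers the required power saving in $X$ with only $L^{O(1)}$ loss. For the remaining supremum factor, I would apply Lemma \ref{lemma43} to those $f_j$ with $d_j \ne 0$, each contributing a saving of $L^{-K/5}$. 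Because $G$ is $5 \times 5$ invertible, enough such indices must exist; propagating this saving through the two Cauchy--Schwarz steps (each of which effectively takes a square root of the $L$-saving attached to the sup factor after raising the integrand to a higher power) produces the claimed bound $L^{-K/20}$.

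The main obstacle will be to verify that, after Fourier decoupling and any necessary reordering of the coordinates of $\bfv$, at least one of the $f_j$ invoked in the supremum step has a nonzero quadratic coefficient $d_j$. This forces us to use not only the rank of the $4 \times 5$ submatrix $G_1$ but the full invertibility of $G$; in particular, when the diagonal entry $a_{5,5}$ vanishes, the $x_5^{2}$ coefficient is absent and the nonzero quadratic must be extracted from a nontrivial linear combination of variables encoded by the fifth row of $G$. It is precisely the condition $\det(G)\ne 0$ that makes such an extraction possible, and making this step uniform in the various degenerate subcases (parallel to the case analysis in Lemmas \ref{lemma58}--\ref{lemma513}) is expected to be the technical heart of the argument.
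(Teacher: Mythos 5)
Your proposal follows the Fourier-decoupling template of Section 5, but that template cannot be made to work under the hypotheses of Proposition \ref{proposition61}, and the paper's actual proof (Lemmas \ref{lemma61}--\ref{lemma64}) is built on entirely different machinery. The first problem is the factorization $S(\alpha)=\int\mathcal{F}\,\mathcal{H}\prod_{j}f_j\,d\boldsymbol{\beta}$ with single-variable sums $f_j(\alpha,\boldsymbol{\beta})=\sum_z\Lambda(z)e(\alpha d_jz^2+z\xi_j^{T}\boldsymbol{\beta})$: this presupposes that the block $A_{22}=(a_{i,j})_{5\le i,j\le 9}$ contributes only its diagonal. But $\det(G)\ne0$ constrains only the row $(a_{5,5},\ldots,a_{5,9})$ and the off-diagonal block $(a_{i,j})_{1\le i\le 4,\,5\le j\le 9}$; the entries $a_{i,j}$ with $6\le i,j\le 9$ are arbitrary, so cross terms such as $2a_{6,7}x_6x_7$ survive and fit into no $f_j$. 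The structural normal forms that make the Section 5 decomposition possible (Lemmas \ref{lemma58}, \ref{lemma59}, \ref{lemma510}, \ref{lemma512}) are exactly what fails when $\rankoff(A)\ge 4$ --- that is why Section 6 exists. The second, fatal, problem is the supremum step. Proposition \ref{proposition61} does not assume $\rank(A)\ge 9$, and $\det(G)\ne 0$ is compatible with \emph{every} diagonal entry of $A$ vanishing: for $f=2(x_1x_5+x_2x_6+x_3x_7+x_4x_8+x_5x_9)$ one has $G=I_5$. For such forms there is no index with $d_j\ne 0$, Lemma \ref{lemma43} is vacuous, and no factor supplies an $L^{-K/5}$ saving. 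You flag this as ``the technical heart,'' but extracting a nonzero quadratic coefficient from the fifth row of $G$ would require a linear change of variables, which destroys the prime support of the summands and is unavailable here.

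The paper's route avoids both obstacles. It applies Cauchy--Schwarz to pass to $\int_{\mathfrak{m}(Q)}|S(\alpha)|^2d\alpha$ and Weyl-differences the five variables $x_5,\ldots,x_9$ (Lemma \ref{lemma62}), producing the linear forms $g_1(\mathbf{v}),\ldots,g_5(\mathbf{v})$, i.e.\ the components of $2G\mathbf{v}$. The hypothesis $\det(G)\ne0$ enters only through the change of variables $\mathbf{h}=2G\mathbf{v}$, which makes these five forms independent: four are absorbed into factors $\Phi(h_j\alpha)=\min\{X,\|h_j\alpha\|^{-1}\}$ and the fifth remains inside a bilinear sum $\sum_z\Lambda(z)\Lambda(z+\cdot)e(\alpha zh_5)$. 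A double-integral large-sieve argument over $\mathfrak{m}(Q)\times\mathfrak{m}(Q)$ (Lemmas \ref{lemma61} and \ref{lemma63}, following \cite{Zhao}) then gives $\int_{\mathfrak{m}(Q)}|S(\alpha)|\,d\alpha\ll L^{n+1}Q^{-1/16}X^{n-2}$, and the stated $L^{-K/20}$ saving comes from summing dyadically over $Q\ge L^{K}$ --- not from the prime quadratic Weyl sum estimate of Lemma \ref{lemma43}. In short, the power saving in $Q$ on the minor arcs replaces the role you assign to Lemma \ref{lemma43}, and no case analysis over the diagonal of $A_{22}$ is needed.
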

\begin{remark}In view of the condition on $G$, one has $n\ge 9$. Proposition \ref{proposition61}
 implies the asymptotic formula (\ref{asymptotic}) for a wide class of quadratic forms with $n\ge 9$.
 However, different from Proposition \ref{proposition51}, it does not require the condition $\rank(A)\ge 9$
  in Proposition \ref{proposition61}. Therefore, Proposition \ref{proposition61} also
  implies
the asymptotic formula (\ref{asymptotic}) for some quadratic
forms, which can not be covered by Theorem \ref{theorem1}.
  \end{remark} Throughout this section, we shall assume that the matrix $G$
given by (\ref{defG}) is invertible.

\begin{lemma}\label{lemma61}
Let $\tau\not=0$ be a real number. Then we have
\begin{align*}\int_{\mathfrak{m}(Q)}\int_{\mathfrak{m}(Q)}\sum_{|x|\ll X}
\min\{X,\ \|x\tau(\alpha-\beta)\|^{-1}\}d\alpha d\beta \ll
LQ^{7/2}X^{-2},\end{align*}where the implied constant depends on
$\tau$.\end{lemma}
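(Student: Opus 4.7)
My approach combines a Vaughan-type estimate for the inner sum with term-by-term integration over $\mathfrak{m}(Q) \times \mathfrak{m}(Q)$, and handles the full range of $Q$ via a dichotomy. For $\alpha, \beta \in \mathfrak{m}(Q) \subseteq \mathcal{M}(2Q)$, I would parametrize using the Dirichlet approximations $\alpha = a/q + \eta_1$, $\beta = a'/q' + \eta_2$ (with $1 \le q, q' \le 2Q$, $(a,q) = (a',q') = 1$, $|\eta_1| \le 2Q/(qX^2)$, $|\eta_2| \le 2Q/(q'X^2)$). Then $\tau(\alpha - \beta) = b/m + \zeta$, where $b/m$ is the reduced form of $\tau(aq'-a'q)/(qq')$ (so $m \mid |\tau|qq'$, hence $m \le 4|\tau|Q^2$) and $|\zeta| = |\tau(\eta_1-\eta_2)| \le 4|\tau|Q/X^2$. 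The variant of Vaughan's Lemma 2.2 used in the proof of Lemma~\ref{lemma42} then gives, pointwise,
\[
\sum_{|x|\ll X}\min\{X, \|x\tau(\alpha-\beta)\|^{-1}\} \ll LX^2 \left(\frac{1}{m(1+X^2|\zeta|)} + \frac{1}{X} + \frac{m(1+X^2|\zeta|)}{X^2}\right).
\]

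I would then integrate each of the three summands over $(\eta_1, \eta_2)$, sum over reduced residues $a \bmod q$ and $a' \bmod q'$, and finally sum over $q, q' \le 2Q$. For the first summand, the change of variable $t = \eta_1 - \eta_2$ reduces the inner integral to $\int (1+|s|)^{-1}ds \ll \log$, and the $\gcd$-average $\sum_{a,a'} m^{-1} = (qq')^{-1}\sum_{a,a'} \gcd(\tau(aq'-a'q), qq')$ is controlled by $(qq')^\epsilon$ via the divisor identity (splitting into cases based on $\gcd(q,q')$ and invoking the bound $\sum_{h \bmod N}\gcd(h,N) \le N \cdot d(N)$); the contribution sums to $\ll L^{2} Q^{2+\epsilon}/X^2$. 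The middle summand contributes the trivial $\ll LX|\mathfrak{m}(Q)|^2 \ll LQ^4/X^3$. For the third summand, using $1 + X^2|\zeta| \ll Q$ and $\sum_{a,a'} m \le (qq')^2$, one obtains $\ll LQ^6/X^4$. All three bounds are $\ll LQ^{7/2}/X^2$ provided $Q \le X^{4/5}$.

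For the complementary range $Q > X^{4/5}$, I would use a direct bound: by Fubini and the substitution $v = x\tau u$, $\int_{-2}^{2}\min\{X, \|x\tau u\|^{-1}\}du \ll L$ uniformly for $|x| \ge 1$, and therefore
\[
\int_{\mathfrak{m}(Q)}\int_{\mathfrak{m}(Q)} \sum_{|x|\ll X}\min\{X, \|x\tau(\alpha-\beta)\|^{-1}\}\,d\alpha\,d\beta \le |\mathfrak{m}(Q)| \cdot \sup_{\alpha}\int_{-2}^{2}\sum_{|x|\ll X}\min\{X,\|x\tau u\|^{-1}\}du \ll \frac{LQ^2}{X},
\]
since $|\mathfrak{m}(Q)| \ll Q^2/X^2$. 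This is $\le LQ^{7/2}/X^2$ whenever $Q \ge X^{2/3}$, and since $X^{2/3} < X^{4/5}$ the two ranges overlap and cover all $Q \le X/2$. The main obstacle is the $\gcd$-averaging $\sum_{a,a'} m^{\pm 1}$, which demands some divisor-sum work to extract the necessary $(qq')^\epsilon$ savings; a secondary challenge is verifying that the Vaughan-based bound (valid for $Q \le X^{4/5}$) and the direct bound (valid for $Q \ge X^{2/3}$) join cleanly to handle every relevant $Q$.
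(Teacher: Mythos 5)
Your route (reconstructing a rational approximation to $\tau(\alpha-\beta)$ from the separate Dirichlet data of $\alpha$ and $\beta$, then applying the refined Lemma 2.2 of Vaughan pointwise) has two genuine gaps. First, the step ``$\tau(aq'-a'q)/(qq')$ has reduced form $b/m$ with $m\mid|\tau|qq'$'' presupposes $\tau\in\Z$ (with minor changes, $\tau\in\Q$); the lemma is asserted for an arbitrary real $\tau\neq0$, and for irrational $\tau$ the quantity $\tau(aq'-a'q)/(qq')$ has no rational representation with denominator $O(Q^2)$, so the first branch of your argument is simply unavailable. Second, and more seriously, the variant of Vaughan's Lemma 2.2 you invoke carries a Diophantine hypothesis of the shape $|\zeta|\le m^{-2}$, and this can fail in your setting: taking $q,q'$ coprime and both of size $Q$ one gets $m\asymp qq'\asymp Q^2$ while $|\zeta|$ can be as large as $\asymp Q/(\min(q,q')X^2)$, so $m^{2}|\zeta|$ can be of order $Q^{4}X^{-2}$, which exceeds $1$ once $Q\gg X^{1/2}$. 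When $|\zeta|>m^{-2}$ the point $\tau(\alpha-\beta)$ may lie extremely close to a rational with much smaller denominator, the sum can be of order $X^{2}$, and the cited estimate no longer follows from Lemma 2.2; one would have to argue separately that the term $m(1+X^{2}|\zeta|)X^{-2}$ still majorizes this, which you do not do. Consequently your ``Vaughan-based bound, valid for $Q\le X^{4/5}$'' is only actually justified for $Q\ll X^{1/2}$, while your direct bound only takes over at $Q\ge X^{2/3}$, leaving the range $X^{1/2}\lesssim Q\lesssim X^{2/3}$ uncovered.

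The paper sidesteps both problems by applying Dirichlet's theorem directly to the single real number $\tau(\alpha-\beta)$ with parameter $X^{2}Q^{-1/2}$: this always yields $a/q$ with $q\le X^{2}Q^{-1/2}$ and $|\tau(\alpha-\beta)-a/q|\le Q^{1/2}(qX^{2})^{-1}\le q^{-2}$, so Lemma 2.2 applies legitimately. If $q>Q^{1/2}$ the pointwise bound $LQ^{-1/2}X^{2}$ times $|\mathfrak{m}(Q)|^{2}\ll Q^{4}X^{-4}$ gives $LQ^{7/2}X^{-2}$; if $q\le Q^{1/2}$, then $\tau(\alpha-\beta)$ lies in a union of arcs of total measure $\ll QX^{-2}$, so the set of such pairs $(\alpha,\beta)$ has measure $\ll Q^{3}X^{-4}$ and the trivial bound $X^{2}$ suffices. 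To repair your argument you would essentially have to re-approximate $\tau(\alpha-\beta)$ in this way on the exceptional set, at which point the gcd-averaging over $a,a',q,q'$ becomes unnecessary.
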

\begin{proof} Without loss of generality, we assume that $0<|\tau|\le
1$. Thus $|\tau(\alpha-\beta)|\le 1$. We introduce
\begin{align*}\mathcal{M}:=
\mathcal{M}=\bigcup_{1\le q\le Q^{1/2}}\bigcup_{\substack{-q\le
a\le q
\\ (a,q)=1}}\Big\{\big|\alpha-\frac{a}{q}\big|\le \frac{Q^{1/2}}{qX^2}\Big\}\end{align*}
Dirichlet's approximation theorem, there exist $a$ and $q$ with
$1\le a\le q\le X^2Q^{-1/2}$, $(a,q)=1$ and $|\alpha-a/q|\le
Q^{1/2}(qX^2)^{-1}$. Since $|\tau(\alpha-\beta)|\le 1$, one has
$-q\le a\le q$. If $\tau(\alpha-\beta)\not\in \mathcal{M}$, then
$q>Q^{1/2}$. By Lemma 2.2 of Vaughan \cite{V},
\begin{align*}\sum_{|x|\ll X}\min\{X,\
\|x\tau(\alpha-\beta)\|^{-1}\}\ll LQ^{-1/2}X^{2}.\end{align*}
Therefore, we obtain
\begin{align*}&\int_{\mathfrak{m}(Q)}\int_{\substack{\mathfrak{m}(Q)
\\ \tau(\alpha-\beta)\not\in \mathcal{M}}}
\sum_{|x|\ll X}\min\{X,\ \|x\tau(\alpha-\beta)\|^{-1}\}d\alpha
d\beta \\ \ll&\
LQ^{-1/2}X^{2}\int_{\mathfrak{m}(Q)}\int_{\mathfrak{m}(Q)}d\alpha
d\beta \ \ll \ LQ^{7/2}X^{-2}.\end{align*} When
$\tau(\alpha-\beta)\in \mathcal{M}$, we apply the trivial bound to
the summation over $x$ to deduce
that\begin{align*}&\int_{\mathfrak{m}(Q)}\int_{\substack{\mathfrak{m}(Q)
\\ \tau(\alpha-\beta)\in \mathcal{M}}}
\sum_{|x|\ll X}\min\{X,\ \|x\tau(\alpha-\beta)\|^{-1}\}d\alpha
d\beta \\ \ll&\
X^{2}\int_{\mathfrak{m}(Q)}\int_{\substack{\mathfrak{m}(Q)
\\ \tau(\alpha-\beta)\in \mathcal{M}}}d\alpha d\beta
\ \ll \ X^2(Q^2X^{-2}QX^{-2})=Q^{3}X^{-2}.\end{align*} The desired
conclusion follows from above immediately.
\end{proof}
To introduce the next lemma, we define
\begin{align*}\Phi(\alpha)=\min\{X,\ \|\alpha\|^{-1}\}.\end{align*}
For $\mathbf{v}=(v_1,\ldots,v_5)\in \Z^5$ and $G$ given by
(\ref{defG}), we write
\begin{align}2G\mathbf{v}=\begin{pmatrix}g_1(\mathbf{v})
\\ \vdots \\ g_5(\mathbf{v})\end{pmatrix}.\end{align}
\begin{lemma}\label{lemma62}
One
has\begin{align}\label{means2}\int_{\mathfrak{m}(Q)}\big|S(\alpha)\big|^2d\alpha\ll
X^{2n-10}L^{2n-6}\int_0^1\Big(\int_{\mathfrak{m}(Q)}J_{\gamma}(\alpha)d\alpha\Big)\Phi(\gamma)d\gamma,\end{align}
where\begin{align}\label{defjgamma}J_{\gamma}(\alpha)=\sum_{|\mathbf{v}|\le
X}\Big|\sum_{|z|\le X}\Lambda(z)\Lambda(z+v_1)e(\alpha z
g_5(\mathbf{v}))e(\gamma
z)\Big|\prod_{j=1}^4\Phi\big(g_j(\mathbf{v})\alpha\big).\end{align}
\end{lemma}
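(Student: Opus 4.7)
The plan is to perform a Cauchy--Schwarz inequality that isolates the five variables $x_{5},\ldots,x_{9}$ (which make $G$ appear), and then Weyl-difference in these five variables. I would partition $\mathbf{x}=(\mathbf{x}_{1},\mathbf{x}_{2},\mathbf{x}_{3})$ with $\mathbf{x}_{1}=(x_{1},\ldots,x_{4})$, $\mathbf{x}_{2}=(x_{5},\ldots,x_{9})$, $\mathbf{x}_{3}=(x_{10},\ldots,x_{n})$, and decompose $A$ accordingly into blocks. The first four rows of $G$ are precisely the rows of the $(\mathbf{x}_{1},\mathbf{x}_{2})$-block $B_{12}$, while the fifth row of $G$ equals the first row of the central block $A_{22}$. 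Bounding $\Lambda(\mathbf{x}_{1})\Lambda(\mathbf{x}_{3})\le L^{n-5}$ pointwise and then applying Cauchy--Schwarz to the outer sum over $(\mathbf{x}_{1},\mathbf{x}_{3})$ will yield
\[
|S(\alpha)|^{2}\le L^{2(n-5)}X^{n-5}\sum_{1\le\mathbf{x}_{1},\mathbf{x}_{3}\le X}\Big|\sum_{\mathbf{x}_{2}}\Lambda(\mathbf{x}_{2})e(\alpha f)\Big|^{2}.
\]

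Next I would open the inner square by setting $\mathbf{v}=\mathbf{x}_{2}''-\mathbf{x}_{2}\in\Z^{5}$. The difference $f(\mathbf{x}_{1},\mathbf{x}_{2},\mathbf{x}_{3})-f(\mathbf{x}_{1},\mathbf{x}_{2}+\mathbf{v},\mathbf{x}_{3})$ is a linear form in $\mathbf{x}_{1},\mathbf{x}_{2},\mathbf{x}_{3}$ with coefficient vectors $B_{12}\mathbf{v}$, $A_{22}\mathbf{v}$, $B_{23}^{T}\mathbf{v}$, so the exponential factors over the individual coordinates. The $\mathbf{x}_{1}$-sum then produces $\prod_{j=1}^{4}\Phi(g_{j}(\mathbf{v})\alpha)$, the $\mathbf{x}_{3}$-sum is bounded trivially by $X^{n-9}$, and the $\mathbf{x}_{2}$-sum factors as $\prod_{k=1}^{5}T_{k}$ with
\[
T_{k}=\sum_{z}\Lambda(z)\Lambda(z+v_{k})e(2\alpha z(A_{22}\mathbf{v})_{k}).
\]
Because $(A_{22}\mathbf{v})_{1}=(G\mathbf{v})_{5}$, the factor $T_{1}$ carries exactly the phase $\alpha z\,g_{5}(\mathbf{v})$ appearing in $J_{\gamma}$ at $\gamma=0$. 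The trivial bound $|T_{k}|\ll LX$ (from $\Lambda\le L$ together with Chebyshev's estimate) applied to $T_{2},\ldots,T_{5}$ collects the remaining logarithmic loss and gives the pointwise estimate
\[
|S(\alpha)|^{2}\ll X^{2n-10}L^{2n-6}J_{0}(\alpha),\qquad J_{0}(\alpha):=\sum_{|\mathbf{v}|\le X}|T_{1}|\prod_{j=1}^{4}\Phi(g_{j}(\mathbf{v})\alpha).
\]

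The remaining, and least routine, task is to pass from $J_{0}$ to the integrated form $\int_{0}^{1}J_{\gamma}(\alpha)\Phi(\gamma)d\gamma$ appearing on the right-hand side. For this I would apply discrete Fourier inversion to the finitely supported sequence $c(z):=\Lambda(z)\Lambda(z+v_{1})\mathbf{1}_{1\le z\le X}$. Writing $c(z)=\int_{0}^{1}\hat c(\theta)e(z\theta)d\theta$ with $\hat c(\theta)=\sum_{n}c(n)e(-n\theta)$ and substituting into $T_{1}(\beta)=\sum_{z}c(z)e(\beta z)$, one obtains
\[
T_{1}(\beta)=\int_{0}^{1}\hat c(\theta)K(\beta+\theta)d\theta,\qquad K(\eta):=\sum_{z=1}^{X}e(z\eta),
\]
and hence $|T_{1}(\beta)|\le\int_{0}^{1}|\hat c(\theta)|\Phi(\beta+\theta)d\theta$ since $|K|\le\Phi$. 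Noting that $|\hat c(\theta)|=|T_{1}(-\theta)|$ and using $\Phi(-\gamma)=\Phi(\gamma)$ together with $1$-periodicity, a change of variable converts this into
\[
|T_{1}(\beta)|\le\int_{0}^{1}|T_{1}(\beta+\gamma)|\Phi(\gamma)d\gamma.
\]
Taking $\beta=\alpha g_{5}(\mathbf{v})$, summing over $\mathbf{v}$ against $\prod_{j=1}^{4}\Phi(g_{j}(\mathbf{v})\alpha)$, and finally integrating over $\alpha\in\mathfrak{m}(Q)$ and swapping the order of integration via Fubini will deliver the stated bound. The crux is the convolution-type inequality in the last display; once it is secured, the remaining manipulations are elementary.
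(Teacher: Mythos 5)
Your decomposition, the Cauchy--Schwarz step isolating $x_5,\ldots,x_9$, the differencing $\mathbf{v}=\mathbf{x}_2''-\mathbf{x}_2$, and the trivial bounds on $T_2,\ldots,T_5$ reproduce the paper's argument exactly, and your exponent count $X^{2n-10}L^{2n-6}$ is correct. The gap is in the step you yourself single out as the crux. When you open the square, the $z$-range in $T_1$ is $\max(1,1-v_1)\le z\le\min(X,X-v_1)$, because both $\mathbf{x}_2$ and $\mathbf{x}_2+\mathbf{v}$ must lie in $[1,X]^5$; the upper constraint $z\le X-v_1$ is not absorbed by the weights. The lemma's $J_\gamma$ carries the inner sum over the fixed range $|z|\le X$, and the whole point of introducing $\gamma$ is to trade the $v_1$-dependent range for this fixed one. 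Your Fourier inversion is applied to the weighted sequence $c(z)$ itself rather than to the indicator of the $v_1$-dependent interval, and this defeats that purpose. If $c$ carries the full range restriction, then $\hat c(\theta)=T_1(-\theta)$ and your inequality $|T_1(\beta)|\le\int_0^1|T_1(\beta+\gamma)|\Phi(\gamma)\,d\gamma$, while true, still has the restricted, $v_1$-dependent sum inside the integral; this is not the sum over $|z|\le X$ appearing in $J_\gamma$, and neither sum dominates the other, since the absolute value sits outside the $z$-summation. If instead $c(z)=\Lambda(z)\Lambda(z+v_1)\mathbf{1}_{1\le z\le X}$ as you literally wrote, then the identity $T_1(\beta)=\sum_z c(z)e(\beta z)$ is false, because $c$ omits the constraint $z\le X-v_1$.

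The repair is the standard maneuver the paper takes from Wooley: Fourier-expand the cutoff, not the arithmetic weight. Write the indicator of the interval $I(v_1)=\{z:\ 1\le z\le X,\ 1-v_1\le z\le X-v_1\}$ as $\int_0^1\mathcal{G}_{v_1}(\gamma)e(z\gamma)\,d\gamma$ with $\mathcal{G}_{v_1}(\gamma)=\sum_{z\in I(v_1)}e(-z\gamma)$, insert this into $T_1$ while letting the weights $\Lambda(z)\Lambda(z+v_1)$ run over all $|z|\le X$, and bound $|\mathcal{G}_{v_1}(\gamma)|\le\Phi(\gamma)$ since $I(v_1)$ is an interval of at most $X$ integers. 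This gives $|T_1|\le\int_0^1\big|\mathcal{K}_{0,\mathbf{v}}(\alpha,\gamma)\big|\Phi(\gamma)\,d\gamma$ with $\mathcal{K}_{0,\mathbf{v}}$ exactly the fixed-range inner sum of $J_\gamma$, after which your summation over $\mathbf{v}$, integration over $\alpha\in\mathfrak{m}(Q)$, and the interchange of integrals go through as you describe.
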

\begin{proof}Let
\begin{align*}r(\mathbf{y})=\sum_{i=5}^9\sum_{j=1}^4a_{i,j}y_iy_j, \
q(\mathbf{z})=\sum_{i=5}^9\sum_{j=5}^9a_{i,j}z_iz_j \ \textrm{ and
}\
p(\mathbf{w})=\sum_{i=9}^n\sum_{j=9}^na_{i,j}w_iw_j.\end{align*}We
set
\begin{align*}B=(2a_{i,j})_{\substack{1\le i\le 4 ,10\le j\le n}} \ \textrm{ and }\
C=(2a_{i,j})_{5\le i\le 9,10\le j\le n}.\end{align*} Then $f$ can
be written in the form
\begin{align*}f(\mathbf{x})=r(\mathbf{y})+y_1g_1(\mathbf{z})+\cdots+y_sg_s(\mathbf{z})+q(\mathbf{z})+
\mathbf{y}^TB\mathbf{w}+\mathbf{z}^TC\mathbf{w}+p(\mathbf{w}),\end{align*}
where $\mathbf{z}=(z_1,\ldots,z_5)$,
$\mathbf{y}=(y_1,\ldots,y_4)$, $\mathbf{w}=(w_1,\ldots,w_{n-9})$.
Note that
$\mathbf{y}^TB\mathbf{w}+\mathbf{z}^TC\mathbf{w}+p(\mathbf{w})$
vanishes if $n=9$. Therefore, one has
\begin{align*}S(\alpha)=\sum_{\substack{1\le \mathbf{y}\le X
\\ 1\le \mathbf{w}\le X }} &\sum_{1\le \mathbf{z}\le
X}\Lambda(\mathbf{z})
e\Big(\alpha\big(y_1g_1(\mathbf{z})+\cdots+y_sg_s(\mathbf{z})+q(\mathbf{z})
+ \mathbf{z}^TB\mathbf{w}\big)\Big)
\\ & \times\Lambda(\mathbf{y})\Lambda(\mathbf{w})e\Big(\alpha\big(r(\mathbf{y})
+ \mathbf{y}^TB\mathbf{w}+p(\mathbf{w})\big)\Big).\end{align*} By
Cauchy's inequality,
\begin{align}\label{ST}|S(\alpha)|^2\le & X^{n-5}L^{2n-10}T(\alpha),
\end{align}where
\begin{align*}T(\alpha)=\sum_{\substack{1\le \mathbf{y}\le X
\\ 1\le \mathbf{w}\le X }}\Big|
\sum_{1\le \mathbf{z}\le X}\Lambda(\mathbf{z})
e\Big(\alpha\big(\sum_{j=1}^4y_jg_j(\mathbf{z})+q(\mathbf{z})+\mathbf{z}^TC\mathbf{w}\big)\Big)\Big|^2.
\end{align*}
Then we deduce that
\begin{align*}T(\alpha)= &  \sum_{\substack{1\le \mathbf{y}\le X
\\ 1\le \mathbf{w}\le X }}
\sum_{1\le \mathbf{z}_1\le X}\sum_{1\le \mathbf{z}_2\le
X}\Lambda(\mathbf{z}_1)\Lambda(\mathbf{z}_2)
e\Big(\alpha\big(\sum_{j=1}^sy_jg_j(\mathbf{z}_1-\mathbf{z}_2)
+q(\mathbf{z}_1)-q(\mathbf{z}_2)\big)\Big)
\\ &\ \ \ \ \ \ \ \ \ \ \ \ \ \ \ \ \ \ \ \ \ \ \ \ \ \ \ \ \
\times e\Big(\alpha(\mathbf{z}_1-\mathbf{z}_2)^TC\mathbf{w}\Big)
\\ = &
\sum_{1\le \mathbf{z}_1\le X}\sum_{1\le \mathbf{z}_2\le
X}\Lambda(\mathbf{z}_1)\Lambda(\mathbf{z}_2) \sum_{\substack{1\le
\mathbf{y}\le X
\\ 1\le \mathbf{w}\le X }}e\Big(\alpha\big(\sum_{j=1}^sy_jg_j(\mathbf{z}_1-\mathbf{z}_2)
+q(\mathbf{z}_1)-q(\mathbf{z}_2)\big)\Big)
\\ &\ \ \ \ \ \ \ \ \ \ \ \ \ \ \ \ \ \ \ \ \ \ \ \ \ \ \ \ \
\times e\Big(\alpha(\mathbf{z}_1-\mathbf{z}_2)^TC\mathbf{w}\Big).
\end{align*}
By changing variables $\mathbf{z}_2=\mathbf{z}_1+\mathbf{v}$, we
have
\begin{align*}T(\alpha)=&
\sum_{1\le \mathbf{z}\le X}\sum_{\substack{|\mathbf{v}|\le X
\\ 1\le \mathbf{v}+\mathbf{z}\le X}}\Lambda(\mathbf{z})\Lambda(\mathbf{z}+\mathbf{v})
\sum_{\substack{1\le \mathbf{y}\le X
\\ 1\le \mathbf{w}\le X }}e\Big(\alpha\big(\sum_{j=1}^sy_jg_j(\mathbf{v})
+q(\mathbf{z}+\mathbf{v})-q(\mathbf{z})\big)\Big)
\\ &\ \ \ \ \ \ \ \ \ \ \ \ \ \ \ \ \ \ \ \ \ \ \ \ \ \ \ \ \
\times e\Big(\alpha(\mathbf{z}_1-\mathbf{z}_2)^TC\mathbf{w}\Big).
\end{align*}
We exchange the summation over $\mathbf{z}$ and the summation over
$\mathbf{v}$ to obtain
\begin{align}\label{newT}T(\alpha)= &
\sum_{|\mathbf{v}|\le X}\Big( \sum_{1\le \mathbf{y}\le
X}e\big(\alpha\sum_{j=1}^4y_jg_j(\mathbf{v})\big)\Big)
R(\mathbf{v}) \prod_{j=1}^5\mathcal{K}_{j,\mathbf{v}}(\alpha),
\end{align}
where
\begin{align*}R(\mathbf{v}) =e\Big(\alpha q(\mathbf{v})\Big)
\sum_{\substack{1\le \mathbf{w}\le X
}}e\Big(\alpha(\mathbf{v}^TC\mathbf{w})\Big)
\end{align*}and
\begin{align}\label{Kj}\mathcal{K}_{j,\mathbf{v}}(\alpha)=\sum_{\substack{1\le z_j \le X
\\ 1-v_j\le z_j\le X-v_j}}\Lambda(z_j)\Lambda(z_j+v_j)
e\big(2\alpha\sum_{k=1}^5a_{j+4,k+4}v_k\big).
\end{align}The range of $z_j$ in summation (\ref{Kj}) depends on
$v_j$. We first follow the standard argument (see for example the
argument around (15) in \cite{W3}) to remove the dependence on
$v_j$. We write
\begin{align}\label{Ggamma}\mathcal{G}_{v_1}(\gamma)=\sum_{\substack{1\le z \le X
\\ 1-v_1\le z \le X-v_1}}
e\big(-z\gamma\big)
\end{align}
and\begin{align}\label{K0}\mathcal{K}_{0,\mathbf{v}}(\alpha,\gamma)=\sum_{\substack{|z|
\le X}}\Lambda(z)\Lambda(z+v_1) e\big(\alpha
g_5(\mathbf{v})\big)e(\gamma z).
\end{align}
Then we deduce from (\ref{Kj}), (\ref{Ggamma}) and (\ref{K0}) that
\begin{align}\label{Kint}\mathcal{K}_{1,\mathbf{v}}(\alpha)=\int_{0}^{1}
\mathcal{K}_{0,\mathbf{v}}(\alpha,\gamma)\mathcal{G}_{v_1}(\gamma)d\gamma.
\end{align}
On substituting (\ref{Kint}) into (\ref{newT}), we obtain
\begin{align*}T(\alpha)= &
\sum_{|\mathbf{v}|\le X}R(\mathbf{v})
\prod_{j=2}^5\mathcal{K}_{j,\mathbf{v}}(\alpha)\Big( \sum_{1\le
\mathbf{y}\le
X}e\big(\alpha\sum_{j=1}^4y_jg_j(\mathbf{v})\big)\Big)\int_{0}^{1}
\mathcal{K}_{0,\mathbf{v}}(\alpha,\gamma)\mathcal{G}_{v_1}(\gamma)d\gamma
\\ =& \int_{0}^{1}
\sum_{|\mathbf{v}|\le X}R(\mathbf{v})
\prod_{j=2}^5\mathcal{K}_{j,\mathbf{v}}(\alpha)\Big( \sum_{1\le
\mathbf{y}\le
X}e\big(\alpha\sum_{j=1}^4y_jg_j(\mathbf{v})\big)\Big)
\mathcal{K}_{0,\mathbf{v}}(\alpha,\gamma)\mathcal{G}_{v_1}(\gamma)d\gamma.
\end{align*}
Then we conclude that
\begin{align}\label{FinalT}|T(\alpha)|\ll  X^{n-5}L^{4}\int_{0}^{1}
\sum_{ |\mathbf{v}|\le X}\big|
\mathcal{K}_{0,\mathbf{v}}(\alpha,\gamma)\big|\prod_{j=1}^4\Phi\big(g_j(\mathbf{v}\alpha)\big)\Phi(\gamma)d\gamma,
\end{align}By putting (\ref{FinalT}) into (\ref{ST}), one has
\begin{align*}|S(\alpha)|^2\ll  X^{2n-10}L^{2n-6}\int_{0}^{1}
\sum_{ |\mathbf{v}|\le X}\big|
\mathcal{K}_{0,\mathbf{v}}(\alpha,\gamma)\big|\prod_{j=1}^4\Phi\big(g_j(\mathbf{v}\alpha)\big)\Phi(\gamma)d\gamma.
\end{align*}
Therefore,
\begin{align*}\int_{\mathfrak{m}(Q)}\big|S(\alpha)\big|^2d\alpha\ll
X^{2n-10}L^{2n-6}\int_0^1\Big(\int_{\mathfrak{m}(Q)}J_{\gamma}(\alpha)d\alpha\Big)\Phi(\gamma)d\gamma.\end{align*}
The proof is completed.
\end{proof}
\begin{lemma}\label{lemma63} Let $J_{\gamma}(\alpha)$ be defined in (\ref{defjgamma}).
 Then one has uniformly for $\gamma\in [0,1]$ that \begin{align*}\int_{\mathfrak{m}(Q)}J_{\gamma}(\alpha)d\alpha
\ll L^{25/4}Q^{-17/8}X^8.
\end{align*}\end{lemma}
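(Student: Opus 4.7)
The plan is to combine a Cauchy--Schwarz in $\mathbf{v}$ with a Cauchy--Schwarz in $\alpha$, using Lemma~\ref{lemma41} to extract cancellation from the inner $z$-sum and Lemma~\ref{lemma42} to control the resulting incomplete $\Phi$-sums on $\mathfrak{m}(Q)$.

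First, I would split $|F_\gamma|\prod_{j=1}^4\Phi(g_j(\mathbf{v})\alpha)=(\prod\Phi)^{1/2}\cdot|F_\gamma|(\prod\Phi)^{1/2}$, where $F_\gamma(\alpha,\mathbf{v})=\sum_{|z|\le X}\Lambda(z)\Lambda(z+v_1)e((\alpha g_5(\mathbf{v})+\gamma)z)$ denotes the inner exponential sum in (\ref{defjgamma}). Cauchy--Schwarz in $\mathbf{v}$ yields $J_\gamma(\alpha)^2\le B(\alpha)A(\alpha)$, where $B(\alpha)=\sum_{|\mathbf{v}|\le X}\prod_{j=1}^4\Phi(g_j(\mathbf{v})\alpha)$ and $A(\alpha)=\sum_{|\mathbf{v}|\le X}|F_\gamma|^2\prod_{j=1}^4\Phi(g_j(\mathbf{v})\alpha)$. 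Since $G$ is invertible, the substitution $\mathbf{u}=2G\mathbf{v}$ sends $g_j(\mathbf{v})\to u_j$ and $v_1\to\ell(\mathbf{u})$ for a linear form $\ell$; extending the lattice sum to all of $\Z^5$ (by positivity) separates the product, and Lemma~\ref{lemma42} gives the pointwise bound
\[B(\alpha)\ll X\prod_{j=1}^4\Big(\sum_{|u_j|\ll X}\Phi(u_j\alpha)\Big)\ll X\cdot(LQ^{-1}X^2)^4=L^4Q^{-4}X^9\]
uniformly for $\alpha\in\mathfrak{m}(Q)$.

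For $A(\alpha)$, in the same coordinates $F_\gamma=\sum_z\Lambda(z)\Lambda(z+\ell(\mathbf{u}))e(\alpha u_5z)e(\gamma z)$. Applying Lemma~\ref{lemma41} with free variable $y=u_5$ and normalized weights $\xi_z=\Lambda(z)\Lambda(z+v_1)e(\gamma z)/L^2$ (of modulus $\le 1$), then invoking Lemma~\ref{lemma42}, I obtain
\[\sum_{|u_5|\ll X}|F_\gamma|^2\ll L^4X\sum_{|x|\ll X}\Phi(x\alpha)\ll L^5Q^{-1}X^3\]
uniformly in $(u_1,\dots,u_4)$. Multiplying by the separated four-fold $\Phi$-product gives $A(\alpha)\ll L^5Q^{-1}X^3\cdot(LQ^{-1}X^2)^4=L^9Q^{-5}X^{11}$, hence $\int_{\mathfrak{m}(Q)}A\,d\alpha\le\mu(\mathfrak{m}(Q))\cdot A_{\max}\ll L^9Q^{-3}X^9$. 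Finally, Cauchy--Schwarz in $\alpha$ with $\mu(\mathfrak{m}(Q))\ll Q^2X^{-2}$ yields
\[\Big(\int_{\mathfrak{m}(Q)}J_\gamma\,d\alpha\Big)^2\le\mu\cdot\sup_\alpha B(\alpha)\cdot\int_{\mathfrak{m}(Q)}A(\alpha)\,d\alpha\ll L^{13}Q^{-5}X^{16},\]
so $\int J_\gamma\ll L^{13/2}Q^{-5/2}X^8$. Since the minor arcs satisfy $Q\ge P=L^K$ with $K$ large, the inequality $L^{1/4}\le Q^{3/8}$ holds and the claimed bound $L^{25/4}Q^{-17/8}X^8$ follows.

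The main obstacle is the placement of the ``free'' Lemma~\ref{lemma41} variable: the argument above is clean when $\ell(\mathbf{u})$ is independent of $u_5$, but in general the coefficient of $u_5$ in $\ell$ may be nonzero. In that case I would first adopt $(v_1,g_1,g_2,g_3,g_4)$ as the new basis (always possible, since $\{v_1,g_1,\dots,g_5\}$ satisfies exactly one linear relation) and apply Lemma~\ref{lemma41} to one of the $g_j$'s; the exponential frequency then becomes a fixed rational multiple of $\alpha$, and an adaptation of Lemma~\ref{lemma42} via Dirichlet's approximation theorem, along the lines of the proof of Lemma~\ref{lemma61}, still bounds the resulting $\Phi$-sum with constants depending on $G$.
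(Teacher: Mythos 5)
Your treatment of the case in which the shift $\ell(\mathbf{u})$ does not involve $u_5$ (the paper's case $b_5=0$) is sound, and your exponent bookkeeping there is correct; indeed $L^{13/2}Q^{-5/2}\le L^{25/4}Q^{-17/8}$ since $Q\ge P=L^K$. The genuine gap is the complementary case $b_5\neq 0$, which is the heart of the lemma, and your proposed fix does not close it. After passing to the basis $(v_1,g_1,\dots,g_4)$ and writing $g_5=c_0v_1+c_1g_1+\cdots+c_4g_4$ (this rebasing is possible precisely when $b_5\neq0$, not ``always''), applying Lemma~\ref{lemma41} ``to one of the $g_j$'s'' runs into two problems. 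First, it can happen that $c_1=\cdots=c_4=0$ (for instance $a_{5,6}=\cdots=a_{5,9}=0$ with $G$ still invertible), in which case the phase is independent of every admissible free variable and Lemma~\ref{lemma41} extracts no cancellation at all. Second, and more fundamentally, each admissible free variable $g_j$ carries its own weight $\Phi(g_j\alpha)$, which must be discarded (bounded by $X$) before Lemma~\ref{lemma41} can be applied; doing so trades a factor $LQ^{-1}X^{2}$ for a factor $X$, i.e.\ costs a full power of $Q$. Carrying out your scheme then yields $A(\alpha)\ll L^{8}Q^{-4}X^{11}$ instead of $L^{9}Q^{-5}X^{11}$, hence only $\int_{\mathfrak{m}(Q)}J_\gamma\,d\alpha\ll L^{6}Q^{-2}X^{8}$.

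The exponent $Q^{-2}$ is exactly critical: feeding it into the proof of Lemma~\ref{lemma64} gives $\int_{\mathfrak{m}(Q)}|S(\alpha)|\,d\alpha\ll L^{C}Q^{0}X^{n-2}$, and the dyadic summation over $Q$ from $P$ to $X$ then produces no saving whatsoever. The whole point of the value $17/8>2$ is to beat this threshold, and that requires an input absent from your proposal. The paper's route when $b_5\ne0$ is to change variables so that the shift $k=v_1$ becomes a free summation variable (so that the weights $\Lambda(z)\Lambda(z+k)$ no longer obstruct the summation over the $\Phi$-free direction), apply Cauchy--Schwarz to convert the resulting expression into a double integral over $\mathfrak{m}(Q)\times\mathfrak{m}(Q)$, and then invoke Lemma~\ref{lemma61}, which shows that $\sum_{|x|\ll X}\min\{X,\|x\tau(\alpha-\beta)\|^{-1}\}$ is small on average over $(\alpha,\beta)\in\mathfrak{m}(Q)^2$ because $\alpha-\beta$ generically retains a denominator of size at least $Q^{1/2}$. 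This averaging over two copies of the minor arcs is the mechanism producing the extra $Q^{-1/8}$, and no rearrangement of single applications of Lemmas~\ref{lemma41} and~\ref{lemma42} at a fixed $\alpha$ will reproduce it.
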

\begin{proof}We deduce by changing variables $\mathbf{h}=2G\mathbf{v}$ that
\begin{align*}J_{\gamma}(\alpha)=\sum_{\substack{|\mathbf{h}|\le
cX\\ (2G)^{-1}\mathbf{h}\in \Z^5\\
|(2G)^{-1}\mathbf{h}|\le X}}\Big|\sum_{|z|\le
X}\Lambda(z)\Lambda(z+\sum_{j=1}^5b_jh_j)e(\alpha z h_5)e(\gamma
z)\Big|\prod_{j=1}^4\Phi(h_j\alpha)\end{align*} for some constants
$c,b_1,\ldots,b_5$ depending only on $G$. We point out that
$b_1,\ldots, b_5$ are rational numbers, and we extend the domain
of function $\Lambda(x)$ by taking $\Lambda(x)=0$ if
$x\in\Q\setminus\N$. Then we have
\begin{align*}J_{\gamma}(\alpha)\le \sum_{\substack{|\mathbf{u}|\le
cX}}\sum_{|h|\le cX}\Big|\sum_{|z|\le
X}\Lambda(z)\Lambda(z+\sum_{j=1}^4b_ju_j+b_5h)e(\alpha z
h)e(\gamma z)\Big|\prod_{j=1}^4\Phi(u_j\alpha).\end{align*} We
first handle the easier case $b_5=0$. In this case, we can easily
obtain a nontrivial estimate for the summation over $h$. By
Cauchy's inequality and Lemma \ref{lemma41}, one has
\begin{align*}&\Big(\sum_{|h|\le cX}\Big|\sum_{|z|\le
X}\Lambda(z)\Lambda(z+\sum_{j=1}^4b_ju_j)e(\alpha z h)e(\gamma
z)\Big|\Big)^2 \\ \le & (2cX+1)\sum_{|h|\le cX}\Big|\sum_{|z|\le
X}\Lambda(z)\Lambda(z+\sum_{j=1}^4b_ju_j)e(\alpha z h)e(\gamma
z)\Big|^2 \\ \ll & X^2\sum_{|x|\ll X}\min\{X,\
\|x\alpha\|^{-1}\}.\end{align*} For $\alpha\in\mathfrak{m}(Q)$, we
apply Lemma \ref{lemma42} to deduce from above
\begin{align*}\sum_{|h|\le cX}\Big|\sum_{|z|\le
X}\Lambda(z)\Lambda(z+\sum_{j=1}^4b_ju_j)e(\alpha z h)e(\gamma
z)\Big|\ll L^{1/2}Q^{-1/2}X^2.\end{align*} Then for
$\alpha\in\mathfrak{m}(Q)$, we obtain
\begin{align*}J_{\gamma}(\alpha)\ll L^{1/2}Q^{-1/2}X^2\sum_{\substack{|\mathbf{u}|\le
cX}}\prod_{j=1}^4\Phi(u_j\alpha)\ll
L^{9/2}Q^{-9/2}X^{10},\end{align*} and thereby
\begin{align}\label{zero}\int_{\mathfrak{m}(Q)}J_{\gamma}(\alpha)
d\alpha \ll L^{9/2}Q^{-5/2}X^{8}\end{align} provided that $b_5=0$.
From now on, we assume $b_5\not=0$. Then we have
\begin{align*}&\sum_{|h|\le cX}\Big|\sum_{|z|\le
X}\Lambda(z)\Lambda(z+\sum_{j=1}^4b_ju_j+b_5h)e(\alpha z
h)e(\gamma z) \Big|\\ =& \sum_{\substack{|k|\le c'X\\
\frac{1}{b_5}(k-\sum_{j=1}^4b_ju_j)\in \Z \\
|\frac{1}{b_5}(k-\sum_{j=1}^4b_ju_j)|\le cX}}\Big|\sum_{|z|\le
X}\Lambda(z)\Lambda(z+k)e\big(\frac{\alpha}{b_5}
z(k-\sum_{j=1}^4b_ju_j)\big)e(\gamma z) \Big|\end{align*} for some
constant $c'$ depending only on $b_1,\ldots,b_5$ and $c$.
Therefore, one has
\begin{align*}&\sum_{|h|\le cX}\Big|\sum_{|z|\le
X}\Lambda(z)\Lambda(z+\sum_{j=1}^4b_ju_j+b_5h)e(\alpha z
h)e(\gamma z) \Big|\\ \le & \sum_{\substack{|k|\le
c'X}}\Big|\sum_{|z|\le
X}\Lambda(z)\Lambda(z+k)e\big(\frac{\alpha}{b_5}
z(k-\sum_{j=1}^4b_ju_j)\big)e(\gamma z) \Big|.\end{align*} We
apply Cauchy's inequality to deduce that
\begin{align*}&\sum_{|h|\le cX}\Big|\sum_{|z|\le
X}\Lambda(z)\Lambda(z+\sum_{j=1}^4b_ju_j+b_5h)e(\alpha z
h)e(\gamma z) \Big|\\ \le &
(2c'X+1)^{1/2}\Big(\sum_{\substack{|k|\le c'X}}\Big|\sum_{|z|\le
X}\Lambda(z)\Lambda(z+k)e\big(\frac{\alpha}{b_5}
z(k-\sum_{j=1}^4b_ju_j)\big)e(\gamma z)
\Big|^2\Big)^{1/2}.\end{align*} We apply Cauchy's inequality again
to obtain\begin{align*}J_{\gamma}(\alpha)\le
(2c'X+1)^{1/2}\Xi_{\gamma}(\alpha)^{1/2}\Big(\sum_{\substack{|\mathbf{u}|\le
cX}}\prod_{j=1}^4\Phi(u_j\alpha)\Big)^{1/2},\end{align*} where
$\Xi_{\gamma}(\alpha)$ is defined as
\begin{align*}\Xi_{\gamma}(\alpha)=\sum_{\substack{|\mathbf{u}|\le
cX}}\sum_{\substack{|k|\le c'X}}\Big|\sum_{|z|\le
X}\Lambda(z)\Lambda(z+k)e\big(\frac{\alpha}{b_5}
z(k-\sum_{j=1}^4b_ju_j)\big)e(\gamma z)
\Big|^2\prod_{j=1}^4\Phi(u_j\alpha).\end{align*} By Lemma
\ref{lemma32},\begin{align*}J_{\gamma}(\alpha)\ll
L^{2}Q^{-2}X^{9/2} \Xi_{\gamma}(\alpha)^{1/2}.\end{align*}
Therefore, we have
\begin{align}\label{intJ}\int_{\mathfrak{m}(Q)}J_{\gamma}(\alpha)
d\alpha \ll &L^{2}Q^{-2}X^{9/2}\Big(\int_{\mathfrak{m}(Q)} d\alpha
\Big)^{1/2}\Big(\int_{\mathfrak{m}(Q)}\Xi_{\gamma}(\alpha) d\alpha
\Big)^{1/2} \notag
\\ \ll & L^{2}Q^{-1}X^{7/2}\Big(\int_{\mathfrak{m}(Q)}\Xi_{\gamma}(\alpha) d\alpha
\Big)^{1/2}.\end{align} Now it suffices to estimate
$\int_{\mathfrak{m}(Q)}\Xi_{\gamma}(\alpha) d\alpha$. We observe
\begin{align*}&\int_{\mathfrak{m}(Q)}\Xi_{\gamma}(\alpha)d\alpha
\\ =&\int_{\mathfrak{m}(Q)}\sum_{\substack{|\mathbf{u}|\le cX}}\sum_{\substack{|k|\le
c'X}}\sum_{|z_1|\le X}\sum_{|z_2|\le X}
\varpi(z_1,z_2,k)e\big(\frac{\alpha}{b_5}
(z_1-z_2)k\big)\Pi(\alpha,\mathbf{u},z_1,z_2)d\alpha
\\ =&\int_{\mathfrak{m}(Q)}\sum_{\substack{|k|\le
c'X}}\sum_{|z_1|\le X}\sum_{|z_2|\le X}
\varpi(z_1,z_2,k)e\big(\frac{\alpha}{b_5}
(z_1-z_2)k\big)\sum_{\substack{|\mathbf{u}|\le
cX}}\Pi(\alpha,\mathbf{u},z_1,z_2)d\alpha,\end{align*} where
\begin{align*}\varpi(z_1,z_2,k)=\Lambda(z_1)\Lambda(z_1+k)\Lambda(z_2)\Lambda(z_2+k)e(\gamma (z_1-z_2))
\end{align*}
and
\begin{align*}\Pi(\alpha,\mathbf{u},z_1,z_2)=e\big(\frac{\alpha}{b_5}
(z_1-z_2)\sum_{j=1}^4b_ju_j)\big)\prod_{j=1}^4\Phi(u_j\alpha).\end{align*}
We exchange the order of summation and integration to conclude
that
\begin{align*}&\int_{\mathfrak{m}(Q)}\Xi_{\gamma}(\alpha)d\alpha
\\ =&\sum_{\substack{|k|\le
c'X}}\sum_{|z_1|\le X}\sum_{|z_2|\le X}
\varpi(z_1,z_2,k)\int_{\mathfrak{m}(Q)}e\big(\frac{\alpha}{b_5}
(z_1-z_2)k\big)\sum_{\substack{|\mathbf{u}|\le
cX}}\Pi(\alpha,\mathbf{u},z_1,z_2)d\alpha \\ \ll
&L^4\sum_{\substack{|k|\le c'X}}\sum_{|z_1|\le X}\sum_{|z_2|\le X}
\Big|\int_{\mathfrak{m}(Q)}e\big(\frac{\alpha}{b_5}
(z_1-z_2)k\big)\sum_{\substack{|\mathbf{u}|\le
cX}}\Pi(\alpha,\mathbf{u},z_1,z_2)d\alpha\Big|\\ =
&L^4\sum_{|z_1|\le X}\sum_{|z_2|\le X}\sum_{\substack{|k|\le c'X}}
\Big|\int_{\mathfrak{m}(Q)}e\big(\frac{\alpha}{b_5}
(z_1-z_2)k\big)\sum_{\substack{|\mathbf{u}|\le
cX}}\Pi(\alpha,\mathbf{u},z_1,z_2)d\alpha\Big|.\end{align*} Then
the Cauchy-Schwarz inequality implies
\begin{align}&\Big(\int_{\mathfrak{m}(Q)}\Xi_{\gamma}(\alpha)d\alpha\Big)^2\notag
\\ \ll &L^8X^3\sum_{|z_1|\le X}\sum_{|z_2|\le X}\sum_{\substack{|k|\le
c'X}} \Big|\int_{\mathfrak{m}(Q)}e\big(\frac{\alpha}{b_5}
(z_1-z_2)k\big)\sum_{\substack{|\mathbf{u}|\le
cX}}\Pi(\alpha,\mathbf{u},z_1,z_2)d\alpha\Big|^2.\label{meanxi2}\end{align}
Now we apply the method developed by the author \cite{Zhao} to
deduce that
\begin{align*}&\sum_{|z_1|\le X}\sum_{|z_2|\le X}\sum_{\substack{|k|\le
c'X}} \Big|\int_{\mathfrak{m}(Q)}e\big(\frac{\alpha}{b_5}
(z_1-z_2)k\big)\sum_{\substack{|\mathbf{u}|\le
cX}}\Pi(\alpha,\mathbf{u},z_1,z_2)d\alpha\Big|^2
\\=&\int_{\mathfrak{m}(Q)}\int_{\mathfrak{m}(Q)}\sum_{|z_1|\le X}\sum_{|z_2|\le X}\sum_{\substack{|k|\le
c'X}}e\big(\frac{\alpha-\beta}{b_5} (z_1-z_2)k\big)
\\ &\ \ \ \ \ \ \ \ \ \ \ \ \ \ \ \ \ \ \ \ \ \ \ \ \times\sum_{\substack{|\mathbf{u}_1|\le
cX}}\Pi(\alpha,\mathbf{u}_1,z_1,z_2)\sum_{\substack{|\mathbf{u}_2|\le
cX}}\Pi(-\beta,\mathbf{u}_2,z_1,z_2)d\alpha d\beta
\\ \le & \int_{\mathfrak{m}(Q)}\int_{\mathfrak{m}(Q)}\sum_{|z_1|\le X}\sum_{|z_2|\le X}\Big|\sum_{\substack{|k|\le
c'X}}e\big(\frac{\alpha-\beta}{b_5} (z_1-z_2)k\big)\Big|
\\ &\ \ \ \ \ \ \ \ \ \ \ \ \ \ \ \ \ \ \ \ \ \ \ \ \times\sum_{\substack{|\mathbf{u}_1|\le
cX}}\prod_{j=1}^4\Phi(u_j\alpha)\sum_{\substack{|\mathbf{u}_2|\le
cX}}\prod_{j=1}^4\Phi(u_j'\beta)d\alpha d\beta,\end{align*}
 where $\mathbf{u}_1=(u_1,\ldots,u_4)^T\in \Z^4$ and $\mathbf{u}_2=(u_1',\ldots,u_4')^T\in \Z^4$.
 Therefore, we obtain by Lemma \ref{lemma42}
\begin{align*}&\sum_{|z_1|\le X}\sum_{|z_2|\le X}\sum_{\substack{|k|\le
c'X}} \Big|\int_{\mathfrak{m}(Q)}e\big(\frac{\alpha}{b_5}
(z_1-z_2)k\big)\sum_{\substack{|\mathbf{u}|\le
cX}}\Pi(\alpha,\mathbf{u},z_1,z_2)d\alpha\Big|^2
\\ \ll &\int_{\mathfrak{m}(Q)}\int_{\mathfrak{m}(Q)}\sum_{|z_1|\le X}\sum_{|z_2|\le X}\min\{X,\
\|\frac{\alpha-\beta}{b_5}
(z_1-z_2)\|^{-1}\}(L^{4}Q^{-4}X^{8})^2d\alpha d\beta\\ \ll
&L^{8}Q^{-8}X^{17}\int_{\mathfrak{m}(Q)}\int_{\mathfrak{m}(Q)}\sum_{|x|\le
X}\min\{X,\ \|\frac{\alpha-\beta}{b_5}x\|^{-1}\}d\alpha
d\beta.\end{align*} Then we conclude from Lemma \ref{lemma61} that
\begin{align}\label{cru}\sum_{|z_1|\le X}\sum_{|z_2|\le X}\sum_{\substack{|k|\le
c'X}} \Big|\int_{\mathfrak{m}(Q)}e\big(\frac{\alpha}{b_5}
(z_1-z_2)k\big)\sum_{\substack{|\mathbf{u}|\le
cX}}\Pi(\alpha,\mathbf{u},z_1,z_2)d\alpha\Big|^2 \ll
L^{9}Q^{-\frac{9}{2}}X^{15}.\end{align} By (\ref{meanxi2}) and
(\ref{cru}),
\begin{align}\int_{\mathfrak{m}(Q)}\Xi_{\gamma}(\alpha)d\alpha \ll L^{17/2}Q^{-9/4}X^{9}.\label{meanxi}\end{align}
By substituting (\ref{meanxi}) into (\ref{intJ}), we obtain
\begin{align}\label{nonzero}\int_{\mathfrak{m}(Q)}J_{\gamma}(\alpha)
d\alpha \ll L^{25/4}Q^{-17/8}X^{8}\end{align} provided that
$b_5\not=0$.

We complete the proof in view of the argument around (\ref{zero})
and (\ref{nonzero}).
\end{proof}

\begin{lemma}\label{lemma64}One
has\begin{align*}\int_{\mathfrak{m}(Q)}\big|S(\alpha)\big| d\alpha
\ll L^{n+1}Q^{-1/16}X^{n-2}.\end{align*}\end{lemma}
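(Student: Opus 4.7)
The plan is to combine Lemmas \ref{lemma62} and \ref{lemma63} to get a mean square estimate for $S(\alpha)$ on $\mathfrak{m}(Q)$, and then pass to the mean of $|S(\alpha)|$ by Cauchy--Schwarz, using the fact that $\mathfrak{m}(Q)\subset \mathcal{M}(2Q)$ has Lebesgue measure $\ll Q^2/X^2$.

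First I would insert the bound from Lemma \ref{lemma63} into the right-hand side of (\ref{means2}). Since the bound on $\int_{\mathfrak{m}(Q)}J_\gamma(\alpha)d\alpha$ supplied by Lemma \ref{lemma63} is uniform in $\gamma\in[0,1]$, the remaining $\gamma$-integral is just $\int_0^1 \Phi(\gamma)d\gamma$. The standard estimate $\int_0^1 \min\{X,\|\gamma\|^{-1}\}d\gamma \ll L$ then gives
\begin{align*}
\int_{\mathfrak{m}(Q)}|S(\alpha)|^2 d\alpha \ll X^{2n-10}L^{2n-6}\cdot L^{25/4}Q^{-17/8}X^8 \cdot L \ll L^{2n+5/4}Q^{-17/8}X^{2n-2}.
\end{align*}

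Next, by Cauchy--Schwarz,
\begin{align*}
\int_{\mathfrak{m}(Q)}|S(\alpha)|d\alpha \le \big(\operatorname{meas}\mathfrak{m}(Q)\big)^{1/2}\Big(\int_{\mathfrak{m}(Q)}|S(\alpha)|^2d\alpha\Big)^{1/2}.
\end{align*}
The measure of $\mathfrak{m}(Q)$ is at most the measure of $\mathcal{M}(2Q)$, which is
\begin{align*}
\sum_{q\le 2Q}\sum_{\substack{1\le a\le q\\(a,q)=1}}\frac{4Q}{qX^2} \ll \frac{Q^2}{X^2}.
\end{align*}
Substituting the two estimates yields
\begin{align*}
\int_{\mathfrak{m}(Q)}|S(\alpha)|d\alpha \ll \frac{Q}{X}\cdot L^{n+5/8}Q^{-17/16}X^{n-1} = L^{n+5/8}Q^{-1/16}X^{n-2},
\end{align*}
which is absorbed in $L^{n+1}Q^{-1/16}X^{n-2}$.

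This is essentially a routine bookkeeping step on top of Lemmas \ref{lemma62}--\ref{lemma63}; the only thing to be careful about is that the bound in Lemma \ref{lemma63} is uniform in $\gamma$, so that pulling the $\gamma$-integral out with the harmless factor $\int_0^1 \Phi(\gamma)d\gamma \ll L$ is legitimate. No additional hard input is required, and the exponent $-1/16$ is exactly what one obtains from halving the exponent $-17/8$ of Lemma \ref{lemma63}, inflated by $+1$ from the measure factor $Q^2/X^2$.
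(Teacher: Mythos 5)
Your proposal is correct and follows exactly the paper's route: insert Lemma \ref{lemma63} into (\ref{means2}), bound $\int_0^1\Phi(\gamma)\,d\gamma\ll L$, and apply Cauchy--Schwarz with $\operatorname{meas}\mathfrak{m}(Q)\ll Q^2X^{-2}$; the exponents check out and yield $L^{n+5/8}Q^{-1/16}X^{n-2}$, which is within the stated bound. (If anything, your write-up is slightly cleaner than the paper's, whose display (\ref{eq619}) is really the bound for $\int|S(\alpha)|^2\,d\alpha$ despite the notation.)
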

\begin{proof}By Cauchy's inequality,
\begin{align}\label{eq618}\int_{\mathfrak{m}(Q)}\big|S(\alpha)\big| d\alpha \le
 & \Big(\int_{\mathfrak{m}(Q)} d\alpha
\Big)^{1/2}\Big(\int_{\mathfrak{m}(Q)}\big|S(\alpha)\big|^2
d\alpha \Big)^{1/2}\\ \ll &
QX^{-1}\Big(\int_{\mathfrak{m}(Q)}\big|S(\alpha)\big|^2 d\alpha
\Big)^{1/2}.\end{align} It follows from Lemma
\ref{lemma62}-\ref{lemma63}
that\begin{align}\label{eq619}\int_{\mathfrak{m}(Q)}\big|S(\alpha)\big|
d\alpha \ll L^{2n+1}
Q^{-17/8}X^{2n-2}\int_{0}^1\Phi(\gamma)d\gamma \ll L^{2n+2}
Q^{-17/8}X^{2n-2}.\end{align}We complete the proof by putting
(\ref{eq618}) into (\ref{eq619}).
\end{proof}

We finish Section 6 by pointing out that Proposition
\ref{proposition61} follows from Lemma \ref{lemma64} by the dyadic
argument.

 \vskip3mm

\section{Proof of Theorem \ref{theorem1}}

By orthogonality, we have
\begin{align*}N_{f,t}(X)=\int_{X^{-1}}^{1+X^{-1}}S(\alpha)e(-t\alpha)d\alpha.\end{align*}
Recalling the definitions of $\mathfrak{M}$ and $\mathfrak{m}$ in
(\ref{defM}) and (\ref{defm}), we have
\begin{align}\label{NX}N_{f,t}(X)=\int_{\mathfrak{M}}S(\alpha)e(-t\alpha)d\alpha
+\int_{\mathfrak{m}}S(\alpha)e(-t\alpha)d\alpha.\end{align} In
light of Lemma \ref{lemma34}, to establish the asymptotic formula
(\ref{asymptotic}), it suffices to
prove\begin{align}\label{minor}\int_{\mathfrak{m}}\big|S(\alpha)\big|
d\alpha \ll X^{n-2}L^{-K/20}.\end{align} In view of Proposition
\ref{proposition61} and the work of Liu \cite{Liu} (see also
Remark of Lemma \ref{lemma44}), the estimate (\ref{minor}) holds
if there exists an invertible matrix
\begin{align*}B=\begin{pmatrix}a_{i_1,j_1} & \cdots & a_{i_5,j_5}
\\ \vdots & \cdots & \vdots
\\ a_{i_5,j_1} & \cdots & a_{i_5,j_5}
\end{pmatrix}\end{align*} with $$|\{i_1,\ldots,i_5\}\cap\{j_1,\cdots,j_5\}|\le 1.$$ Next
we assume $\rank(B)\le 4$ for all $B=(a_{i_k,j_l})_{1\le k,l\le
5}$ satisfying $|\{i_1,\ldots,i_5\}\cap\{j_1,\cdots,j_5\}|\le 1$.
This yields $\rankoff(A)\le 4$. By Proposition
\ref{proposition51}, we can establish (\ref{minor}) again if
$\rankoff(A)\le 3$. It remains to consider the case
 $\rank_{\textrm{off}}(A)\geqslant 4$. Without loss of
generality, we assume that $\rank(C)=4$,
where\begin{align*}C=\begin{pmatrix}a_{1,5} & a_{1,6} & a_{1,7} &
a_{1,8}
\\ a_{2,5} & a_{2,6} & a_{2,7} &
a_{2,8}
\\ a_{3,5} & a_{3,6} & a_{3,7} &
a_{3,8}
\\ a_{4,5} & a_{4,6} & a_{4,7} &
a_{4,8}\end{pmatrix}.\end{align*}Let $\gamma_j=(a_{j,5}, \ldots,
a_{j,n})^{T}\in \Z^{n-4}$ for $1\le j\le n$. Then $\gamma_1$,
$\gamma_2$, $\gamma_3$ and $\gamma_4$ are linear independent due
to $\rank(C)=4$. For $5\le k\le n$, we considering
\begin{align*}B=\begin{pmatrix}a_{1,5} &
\cdots & a_{1,n}
\\ \vdots & \cdots & \vdots
\\ a_{4,5} & \cdots & a_{4,n}
\\ a_{k,5} & \cdots & a_{k,n}\end{pmatrix}\in
M_{5,n-4}(\Z).\end{align*}According to our assumption, one has
$\rank(B)\le 4$. Then we conclude from above that $\gamma_k$ can
be linear represented by $\gamma_1$, $\gamma_2$, $\gamma_3$ and
$\gamma_4$. Therefore, one has $\rank(H)=4$, where
\begin{align*}H=\begin{pmatrix}a_{1,5} & \cdots & a_{1,n}
\\ \vdots & \cdots & \vdots
\\ a_{n,5} & \cdots & a_{n,n}\end{pmatrix}\in M_{n,n-4}(\Z).\end{align*}
We obtain $\rank(A)\le \rank(H)+4\le 8$. This is contradictory to
the condition that $\rank(A)\geqslant 9$. Therefore, we complete
the proof of Theorem \ref{theorem1}.


\vskip9mm

\vskip4mm
\end{document}